\numberwithin{equation}{section}  
\theoremstyle{plain}
\newtheorem{theorem}{Theorem}
\newtheorem*{theorem-non}{Theorem}
\newtheorem{corollary}[theorem]{Corollary}
\newtheorem{lemma}[theorem]{Lemma}
\newtheorem{proposition}[theorem]{Proposition}
\newtheorem*{proposition-non}{Proposition}
\numberwithin{theorem}{section}
\theoremstyle{definition}
\theoremstyle{remark}
\newtheorem{remark}[theorem]{Remark}
\begin{document}
\title{A note on the spaces of Eisenstein series on general congruence subgroups}
\author{Soumyadip Sahu\\School of Mathematics, Tata Institute of Fundamental Research,\\ 1 Homi Bhabha Road, Mumbai, 400005, Maharashtra, India.\\Email: soumyadip.sahu00@gmail.com} 
\date{}
\maketitle
\begin{abstract}
This article proposes a new approach to studying the spectral Eisenstein series of weight $k$ on a congruence subgroup of $\text{SL}_2(\mathbb{Z})$ using Hecke's theory of Eisenstein series for the principal congruence subgroups. Our method provides a gateway to analytic and arithmetic properties of the spectral Eisenstein series using corresponding results for the principal congruence subgroup. We show that the specializations of the weight $k$ spectral Eisenstein series at $s = 0$ give rise to a basis for the space of Eisenstein series on a general congruence subgroup, and the Fourier coefficients of the basis elements lie in a cyclotomic number field. Our philosophy also yields an explicit basis parameterized by cusps for the space of Eisenstein series with a nebentypus character. We utilize the spectral basis for the space of Eisenstein series to provide a simple proof of the Eichler-Shimura isomorphism theorem for the entire space of modular forms. 
\end{abstract}
  
\textbf{MSC2020:} Primary 11F11; Secondary 11F67

\textbf{Keywords:} Eisenstein series, congruence subgroups

\section{Introduction}
\label{section1}
Let $k$ be an integer $\geq 2$ and $\mathcal{G}$ be a congruence subgroup of $\text{SL}_2(\mathbb{Z})$. Suppose that $\mathcal{M}_{k}(\mathcal{G})$, resp. $\mathcal{S}_{k}(\mathcal{G})$, is the $\mathbb{C}$-vector space of modular forms, resp. cusp forms, on $\mathcal{G}$. We define the space of Eisenstein series on $\mathcal{G}$, denoted $\mathcal{E}_{k}(\mathcal{G})$, to be the linear complement of $\mathcal{S}_k(\mathcal{G})$ that is orthogonal to $\mathcal{S}_k(\mathcal{G})$ for the Petersson inner product. The nondegeneracy of the Petersson inner product on the space of cusp forms ensures that, if such a space exists, it is unique. Moreover, the dimension formulas for the spaces of modular forms and cusp forms predict the dimension of $\mathcal{E}_{k}(\mathcal{G})$. One uses Hecke's theory of Eisenstein series on principal congruence subgroups to construct the space of Eisenstein series on $\Gamma(N)$. Let $\mathcal{G}$ be a congruence subgroup that contains $\Gamma(N)$. Then the space of Eisenstein series on $\mathcal{G}$ equals  
\begin{equation}
\label{1.1}
\mathcal{E}_{k}(\mathcal{G}) = \mathcal{E}_k\big(\Gamma(N)\big) \cap \mathcal{M}_{k}(\mathcal{G}) = \mathcal{E}_k\big(\Gamma(N)\big)^{\mathcal{G}};
\end{equation}
see \cite[p.212]{ds}. In the spectral theory of the upper half plane \cite{iwaniec},  one attaches a collection of Eisenstein series of weight $0$ with the cusps of a congruence subgroup $\mathcal{G}$ of $\text{PSL}_2(\mathbb{Z})$ given by the formula 
\begin{equation}
\label{1.2}
\sum_{\gamma \in \mathcal{G}_x \backslash \mathcal{G}} (\text{Im } \sigma_{x}^{-1} \gamma \tau)^s
\end{equation}
where $x$ is a cusp of $\mathcal{G}$ and $\sigma_{x} \in \text{PSL}_2(\mathbb{Z})$ is a scaling matrix for the cusp $x$, i.e., $x = \sigma_{x} \infty$. However, most of the sources in the literature do not seem to treat the spectral Eisenstein of weight $k$ beyond special classes of congruence subgroups such as $\Gamma_{1}(N)$, $\Gamma_{0}(N)$, and $\Gamma(N)$; cf. \cite{kudla-yang}. This article stems from the author's effort to explicitly write down the spectral Eisenstein of weight $k$ on a general congruence subgroup with a view towards its arithmetic applications. Hecke's theory of Eisenstein series on principal congruence subgroups (Section~\ref{section2}) already provides a good construction of spectral Eisenstein series on $\Gamma(N)$. Our primary achievement is to express the spectral Eisenstein series of weight $k$ on a congruence subgroup $\mathcal{G}$ as a sum of Eisenstein series on a principal level $\Gamma(N)$ contained in $\mathcal{G}$ in a precise manner (Section~\ref{section4.1}). This task resembles summing over $\mathcal{G}$-orbits in $\mathcal{E}_k\big(\Gamma(N)\big)$ in the spirit of \eqref{1.1}, which, in terms, is closely related to the study of the $\mathcal{G}$ orbits in 
\[\{\bar{\lambda} \in \mathbb{Z}^{2}/N\mathbb{Z}^{2} \mid \text{ord}(\bar{\lambda}) = N\}\] for the natural right action of $\text{SL}_2(\mathbb{Z})$ on the lattice $\mathbb{Z}^{2}$. For practical purposes, one can write down these orbits for large classes of congruence subgroups, yielding a more concrete description for the spectral Eisenstein series; see Section~\ref{section3} and Appendix~\ref{appendix}. Our idea to break up the spectral Eisenstein series as a sum provides easy access to the analytic properties of the Eisenstein series on a congruence subgroup using the corresponding results for $\Gamma(N)$. In particular, we show that if $k \geq 3$ then the specializations of the spectral Eisenstein series of weight $k$ at $s = 0$ provide a basis for the space of Eisenstein series $\mathcal{E}_k(\mathcal{G})$. For $k = 2$, the specialization of the spectral Eisenstein series at $s = 0$ is not holomorphic due to an extra term arising from analytic continuation. Nevertheless, we provide explicit linear combinations of the specializations that form a basis of the holomorphic space $\mathcal{E}_2(\mathcal{G})$ (Section~\ref{section4.2}). Counting the number of basis elements, one arrives at the following result:     

\begin{proposition-non}\emph{(Corollary~\ref{corollary4.6new})}  
Let $\mathcal{C}(\mathcal{G})$, resp. $\mathcal{C}_{\infty}(\mathcal{G})$, denote the collection of cusps, resp. regular cusps, of $\mathcal{G}$. Then  
\[\emph{dim}_{\mathbb{C}} \mathcal{E}_k(\mathcal{G}) = \begin{cases}
			\lvert \mathcal{C}(\mathcal{G}) \rvert, & \text{if $k$ is even and $\geq 4$;}\\
			\lvert \mathcal{C}_{\infty}(\mathcal{G}) \rvert, & \text{if $k$ is odd and $-\emph{Id} \notin \mathcal{G}$;}\\
			\lvert \mathcal{C}(\mathcal{G}) \rvert -1, & \text{if $k = 2$;}\\
			0, & \text{otherwise.}
		\end{cases} \]
\end{proposition-non}

Thus, our work retrieves the dimension of the space of Eisenstein series on $\mathcal{G}$ assuming the standard description of the space of Eisenstein series on $\Gamma(N)$. If $k \geq 3$, then one does not need to know the expected dimension of $\mathcal{E}_{k}\big(\Gamma(N)\big)$ to verify that the construction using Hecke's Eisenstein series yields the correct subspace (Section~\ref{section2}). As a consequence, we obtain an elementary proof of the dimension count of the space of Eisenstein series of weight $\geq 3$ on a general congruence subgroup that avoids using heavy geometric tools such as the Riemann-Roch theorem. 

In standard literature, one finds two different avatars of the Eisenstein series on $\Gamma(N)$: the normalized series, which corresponds to the spectral Eisenstein series on $\Gamma(N)$, and the unnormalized series, which is convenient for practical computations since it possesses an explicit Fourier expansion (Section~\ref{section2}). The summation procedure mentioned above continues to hold if one replaces the normalized series on $\Gamma(N)$ with the unnormalized series and suggests a second construction of nonholomorphic Eisenstein series on congruence subgroups. We refer to this version of nonholomorphic Eisenstein series on a general congruence subgroup as the \textit{unnormalized} Eisenstein series (Section~\ref{section5}). Though the constant term of the unnormalized Eisenstein series lacks the familiar indicator of the cusp property, they do give rise to a basis of the space of Eisenstein series, and their explicit Fourier expansion makes this basis available for computations. The spectral and unnormalized Eisenstein series are related to each other through simple linear combinations whose coefficients are familiar Dirichlet series. Let $\mathcal{G}$ be a congruence subgroup containing $\Gamma(N)$. Suppose that $\mathcal{B}_{\mathcal{G},k}$ is the spectral basis of $\mathcal{E}_{k}(\mathcal{G})$ arising from the spectral Eisenstein series (Theorem~\ref{theorem4.5new}). Then the Fourier expansion formula for the unnormalized series yields the following striking rationality result:  

\begin{theorem-non}\emph{(Theorem~\ref{theorem5.5new})}
Each function in the spectral basis $\mathcal{B}_{\mathcal{G},k}$ admits a Fourier expansion of the form \[\sum_{j \geq 0} A_j\exp(\frac{2\pi i j\tau}{N})\] with $A_j \in \mathbb{Q}(\zeta_N)$ where $\zeta_N = \exp(\frac{2\pi i}{N})$.  
\end{theorem-non}

In the context of modular forms with a nebentypus character, it is conventional to write down a Hecke eigenbasis for the space of Eisenstein series instead of a basis parameterized by the cusps. We employ the summation philosophy discussed above to prescribe a basis parameterized by the cusps for the spaces of Eisenstein series with nebentypus (Section~\ref{section7.1new}), providing an alternate approach to a recent work of Young \cite{young} in this direction. Readers interested in the applications of such a basis may consult his article. The classical Eichler-Shimura theory for cusp forms defines a $\mathbb{C}$-linear Hecke equivariant homomorphism $\mathcal{S}_k(\mathcal{G}) \to H^{1}(\mathcal{G}, V_{k-2,\mathbb{C}})$ that formally extends to a $\mathbb{C}$-linear Hecke equivariant map 
\begin{equation*}
\mathcal{S}_k(\mathcal{G}) \oplus \mathcal{S}^{a}_k(\mathcal{G}) \to H^{1}(\mathcal{G}, V_{k-2,\mathbb{C}})  
\end{equation*} 
where $\mathcal{S}^{a}_k(\mathcal{G}) = \{\bar{f} \mid f \in \mathcal{S}_k(\mathcal{G})\}$ is the space of antiholomorphic cusp forms. The Eichler-Shimura isomorphism theorem for the cusp forms asserts that \eqref{1.1} is injective and its image equals the parabolic subspace of $H^{1}(\mathcal{G}, V_{k-2,\mathbb{C}})$ \cite[8.2]{shimura94}. There are several different ways in literature to extend the Eichler-Shimura isomorphism for cusp forms to a Hecke equivariant isomorphism: 
\begin{equation}
\label{1.3}
\mathcal{M}_k(\mathcal{G}) \oplus \mathcal{S}^{a}_k(\mathcal{G}) \cong H^{1}(\mathcal{G}, V_{k-2,\mathbb{C}}).
\end{equation}
The first proof of this result is using the theory of Eichler cohomology (see \cite{knopp} for details) that draws its tools from the deep theory of complex Riemann surfaces, while an algebro-geometric approach through the compactification of the modular curve appears in the work of Faltings \cite{faltings}. A relatively elementary approach based on the topology of Riemann surfaces is presented in the book of Hida \cite[6]{hida} that exploits the existence of a basis parameterized by the cusps to establish \eqref{1.3} in some special cases; cf. \cite[5]{bellaiche}. We use our spectral basis for the space of Eisenstein series to write down a complete proof of \eqref{1.3} following the topological strategy of Hida that works for  all weights $\geq 2$ and all congruence subgroups (Section~\ref{section6}). In principle, our proof is closest in spirit to the theory of Eisenstein cohomology of automorphic forms \cite{harder}, which realizes the same program in far greater generality using the Langlands Eisenstein series. The automorphic theory, however, does not state the isomorphism in terms of the finite level congruence subgroups, and deducing the results for congruence subgroups with torsion usually runs into technical difficulties. We also use our general isomorphism theorem to extend Hida's Eichler-Shimura isomorphism for modular forms with nebentypus to all levels and all nebentypus characters (Section~\ref{section7.2new}).

\subsection{Notation and conventions}
\label{section1.1}
Let $\mathbb{H}$ be the upper half-plane. The letter $\tau$ denotes a variable taking values in $\mathbb{H}$ and $\text{Im}(\tau)$ is the imaginary part of $\tau$. Suppose that $\text{Hol}(\mathbb{H})$, resp. $\mathcal{C}^{\omega}(\mathbb{H})$, is the algebra of $\mathbb{C}$-valued holomorphic, resp. real analytic, functions on $\mathbb{H}$. For $f \in \mathcal{C}^{\omega}(\mathbb{H})$ define the \textit{constant term} of $f$ to be \[\pi_{\infty}(f) := \lim_{t \to \infty} f(it)\] whenever the limit exists. We normalize the complex exponential function as $\mathbf{e}(z):= \exp(2 \pi i z)$. For a positive integer $N \geq 1$, set $\zeta_{N} := \textbf{e}(\frac{1}{N})$. 

Let $a_1, \ldots, a_{n}$ be integers. Then $\gcd(a_1, \ldots, a_{n})$ refers to the unique nonnegative generator of $\{a_1, \ldots, a_n\}\mathbb{Z}$. For example, $\gcd(0,0,2)=2$. Given a set $X$ let  $\mathbbm{1}_{X}$ be the delta function on $X$, i.e., 
\[\mathbbm{1}_{X}(x, y) := \begin{cases}
			1, & x = y;\\
			0, & x \neq y. 
		\end{cases}\]
		
Let $\Gamma$ be the full modular group $\text{SL}_2(\mathbb{Z})$. We deal with the standard congruence subgroups $\Gamma(N)$, $\Gamma_{1}(N)$, $\Gamma_{0}(N)$ \cite[1.2]{ds} and call $\Gamma(N)$ the \textit{principal congruence subgroup} of level $N$. Now suppose $\mathcal{G}$ is a general congruence subgroup. Then $\mathcal{C}(\mathcal{G})$, resp. $\mathcal{C}_{\infty}(\mathcal{G})$, refers to the collection of the cusps, resp. regular cusps, of $\mathcal{G}$. For $x \in \mathbb{P}^{1}(\mathbb{Q})$ let $\mathcal{G}_x$ denote the stabilizer of $x$ in $\mathcal{G}$. The stabilizer of $\infty$ in $\Gamma$ is $\Gamma_{\infty} = \{\pm T^n \mid n \in \mathbb{Z}\}$ where $T = \begin{pmatrix}  
			1 & 1\\
			0 & 1
		\end{pmatrix}$. 
For an integer $k$, we also need to consider the $\lvert_{k}$ action of $\Gamma$ on a function $f: \mathbb{H} \to \mathbb{C}$ defined as follows: 
\[f\lvert_{k, \gamma} := (c\tau + d)^{-k}f(\frac{a \tau + b}{c \tau + d})\] 
where $\gamma = \begin{pmatrix}
	a & b\\
	c & d
\end{pmatrix}$. If $k$ is clear from the context, we drop it from the notation. Note that the action preserves $\text{Hol}(\mathbb{H})$ and $\mathcal{C}^{\omega}(\mathbb{H})$.

\section{Eisenstein series on $\Gamma(N)$}
\label{section2}
This section revisits Hecke's theory of Eisenstein series on principal congruence subgroups \cite[4.2]{ds} with a view towards our applications. We begin by introducing a few notations used throughout the article. Let $\Lambda$ denote the lattice $\mathbb{Z}^{2}$. One writes the elements of $\Lambda$ as row vectors and treats it as a right $\Gamma$ module through matrix multiplication. For a positive integer $N$, set 
\[\Lambda_{N} = \{\bar{\lambda} \in \Lambda/N\Lambda \mid \text{ord}(\bar{\lambda}) = N\}.\] 
The $\Gamma$-structure on $\Lambda$ induces a $\Gamma$-action on $\Lambda_{N}$. Write $\Lambda_{N}^{\pm}= \Lambda_{N}/\{\pm \text{Id}\}$. It is clear that the $\Gamma$-action on $\Lambda_{N}$ descends to a well-defined action on $\Lambda_{N}^{\pm}$. The following result recalls the characterization of the cusps of $\Gamma(N)$.  

\begin{proposition}
\label{proposition2.1} \emph{(\cite[3.8]{ds}, Proposition~3.8.3)}  
Let $x = \frac{\alpha}{\beta}$ and $x'= \frac{\alpha'}{\beta'}$ be elements of $\mathbb{P}^{1}(\mathbb{Q})$ where $(\alpha, \beta), (\alpha', \beta') \in \Lambda$ with $\gcd(\alpha, \beta) = \gcd(\alpha', \beta') =1$. Then $x_N = x'_N$ if and only if $(\alpha, \beta) \equiv \pm (\alpha', \beta') (\emph{mod $N$})$. Here $(\cdot)_{N}$ refers to the image of $(\cdot)$ in $\mathcal{C}\big(\Gamma(N)\big)$. 
\end{proposition}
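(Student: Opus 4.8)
The plan is to recast the statement about cusps as one about the linear action of $\Gamma$ on primitive vectors and then handle the two implications separately. To a point $x = \frac{\alpha}{\beta}$ with $\gcd(\alpha,\beta)=1$ I attach the column vector $v_x = \begin{pmatrix}\alpha\\\beta\end{pmatrix}$, which is primitive and well defined up to the sign $v_x \mapsto -v_x$; this identifies $\mathbb{P}^{1}(\mathbb{Q})$ with primitive vectors modulo $\pm\mathrm{Id}$, with $\infty=\frac{1}{0}$ corresponding to $e_1 = \begin{pmatrix}1\\0\end{pmatrix}$. The fractional linear action of $\gamma\in\Gamma$ is covered by the linear action on vectors, $v_{\gamma x} = \pm\,\gamma v_x$, and $\det\gamma=1$ keeps the result primitive. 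Since $x_N = x'_N$ means exactly that $x$ and $x'$ lie in one $\Gamma(N)$-orbit, the proposition reduces to the claim that $x$ and $x'$ are $\Gamma(N)$-equivalent if and only if $v_x \equiv \pm v_{x'}\pmod N$. The only inputs needed are that a primitive vector can be completed to a matrix of $\Gamma$ (Bézout) and that the stabilizer of $e_1$ under the linear action is $\{T^{n} : n\in\mathbb{Z}\}$.

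For the direct implication, suppose $\delta x = x'$ with $\delta\in\Gamma(N)$. Then $\delta v_x$ is a primitive representative of $x'$, so $v_{x'} = \pm\,\delta v_x$; reducing modulo $N$ and using $\delta\equiv\mathrm{Id}\pmod N$ yields $v_{x'}\equiv\pm v_x\pmod N$. This is immediate, and the substance of the proposition lies in the converse.

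For the converse I may, after possibly replacing $v_{x'}$ by $-v_{x'}$ (which leaves $x'$ unchanged), assume $v_x\equiv v_{x'}\pmod N$, and I must produce an explicit $\delta\in\Gamma(N)$ with $\delta x = x'$. I would complete $v_x$ and $v_{x'}$ to matrices $\gamma,\gamma'\in\Gamma$ with first columns $v_x$ and $v_{x'}$, so that $\gamma e_1 = v_x$ and $\gamma' e_1 = v_{x'}$, and then reduce $\mu := \gamma^{-1}\gamma'$ modulo $N$. Since $\mu e_1 = \gamma^{-1}v_{x'}\equiv\gamma^{-1}v_x = e_1\pmod N$, the reduction of $\mu$ fixes $\bar e_1$, and the stabilizer of $\bar e_1$ in $\mathrm{SL}_2(\mathbb{Z}/N\mathbb{Z})$ is exactly the group of unipotent upper-triangular matrices, so $\mu\equiv T^{b}\pmod N$ for some integer $b$. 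I then set $\delta := \gamma' T^{-b}\gamma^{-1}$. Because $T^{-b}e_1 = e_1$ one gets $\delta v_x = \gamma' e_1 = v_{x'}$, hence $\delta x = x'$; and since $\gamma' = \gamma\mu \equiv \gamma T^{b}\pmod N$, one computes $\delta\equiv \gamma T^{b}T^{-b}\gamma^{-1} = \mathrm{Id}\pmod N$, so $\delta\in\Gamma(N)$.

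The main obstacle is this construction of $\delta$: the naive candidate $\gamma'\gamma^{-1}$ does send $x$ to $x'$ but need not lie in $\Gamma(N)$, and the whole point is to correct it by the power $T^{-b}$, which is invisible at the level of the point $x'$ (as $T$ fixes $e_1$) yet precisely repairs the reduction modulo $N$. I would close with the remark that the sign ambiguity in the congruence is genuine, reflecting the projective identification $v\sim -v$ defining $\mathbb{P}^{1}(\mathbb{Q})$ together with the fact that $-\mathrm{Id}\in\Gamma(N)$ only for $N\le 2$.
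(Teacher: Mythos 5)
Your proof is correct, and since the paper gives no proof of its own but cites Diamond--Shurman, Proposition~3.8.3, your argument is essentially that standard one: identify cusps with primitive vectors modulo $\pm\mathrm{Id}$, complete $v_x$ and $v_{x'}$ to matrices in $\mathrm{SL}_2(\mathbb{Z})$, and repair the naive candidate $\gamma'\gamma^{-1}$ by a power of $T$, using that the stabilizer of $\bar{e}_1$ in $\mathrm{SL}_2(\mathbb{Z}/N\mathbb{Z})$ consists exactly of the unipotent upper-triangular matrices. The sign bookkeeping and the verification that $\delta = \gamma' T^{-b}\gamma^{-1}$ lies in $\Gamma(N)$ are both handled correctly, so there is nothing to fix.
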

	
The $\Gamma$-action on $\mathbb{P}^{1}(\mathbb{Q})$ gives rise to a natural $\Gamma$-action on $\mathcal{C}\big(\Gamma(N)\big)$ described by $\gamma . \Gamma(N)x = \Gamma(N)(\gamma x)$ where $\gamma \in \Gamma$ and $x \in \mathbb{P}^{1}(\mathbb{Q})$.
One can rephrase Proposition~\ref{proposition2.1} as a $\Gamma$-equivariant bijection. Define 
\begin{equation}
\label{2.1new}
\phi_N: \mathcal{C}\big(\Gamma(N)\big) \to \Lambda_N^{\pm}, \hspace{.3cm} \phi_N(x) := [\overline{(\beta, -\alpha)}]
\end{equation}
where $x = (\frac{\alpha}{\beta})_N$ with $\gcd(\alpha, \beta) = 1$ and $[\cdot]$ denotes the image in $\Lambda_N^{\pm}$. Proposition~\ref{proposition2.1} demonstrates that $\phi_{N}$ is a well-defined injection. Moreover, a direct calculation verifies that $\phi_N$ is $\Gamma$-equivariant, i.e., $\phi_N(\gamma x) = \phi_N(x)\gamma^{-1}$ for each $\gamma \in \Gamma$. Note that $\phi_N(\infty_{N}) = [\overline{(0, 1)}]$. 
	
\begin{lemma}
\label{lemma2.2}
The map $\phi_N$ is a $\Gamma$-equivariant bijection. 
\end{lemma}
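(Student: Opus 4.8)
The plan is to show that $\phi_N$ is surjective, since Proposition~\ref{proposition2.1} already gives injectivity and $\Gamma$-equivariance is verified above. The cleanest route is to exhibit, for each class $[\bar{\lambda}] \in \Lambda_N^{\pm}$, a cusp mapping to it. First I would fix a representative $(c,d) \in \Lambda$ of $\bar{\lambda}$ with $\mathrm{ord}(\bar{\lambda}) = N$, which by definition of $\Lambda_N$ means $\gcd(c,d,N) = 1$. The key elementary fact I would invoke is that such a pair can be completed to a matrix in $\Gamma = \mathrm{SL}_2(\mathbb{Z})$: there exist integers $a,b$ together with an adjustment of $(c,d)$ by a multiple of $N$ so that $\bigl(\begin{smallmatrix} a & b \\ c & d \end{smallmatrix}\bigr) \in \Gamma$. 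Concretely, since $\gcd(c,d,N)=1$, one can replace $(c,d)$ by a congruent pair $(c',d')$ with $\gcd(c',d')=1$ (a standard lemma: add a suitable multiple of $N$ to adjust the gcd to $1$ without changing the class mod $N$), and then Bezout supplies $a,b$ with $ad'-bc'=1$.

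Once the matrix $\gamma = \bigl(\begin{smallmatrix} a & b \\ c' & d' \end{smallmatrix}\bigr) \in \Gamma$ is in hand, I would unwind the definition of $\phi_N$ on the cusp $x = \gamma^{-1}\infty$, or more directly produce the rational number whose image is the prescribed class. Writing $x = \frac{\alpha}{\beta}$ with $\gcd(\alpha,\beta)=1$, the formula $\phi_N(x) = [\overline{(\beta,-\alpha)}]$ means I want $(\beta,-\alpha) \equiv \pm(c',d') \pmod N$, i.e. $\beta \equiv c'$ and $\alpha \equiv -d' \pmod N$ up to sign. So I would simply take $\alpha = -d'$ and $\beta = c'$ — these are coprime since $\gcd(c',d')=1$ — and set $x = \frac{-d'}{c'} \in \mathbb{P}^1(\mathbb{Q})$. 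Then by the defining formula $\phi_N(x_N) = [\overline{(c',d')}] = [\bar{\lambda}]$, establishing surjectivity.

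Alternatively, and perhaps more in keeping with the $\Gamma$-equivariance already noted, I would use transitivity: $\Gamma$ acts transitively on $\Lambda_N$ (indeed on primitive-mod-$N$ vectors, by the completion-to-$\mathrm{SL}_2(\mathbb{Z})$ argument above applied to $(0,1)\gamma^{-1} = \bar{\lambda}$), and $\Gamma$ acts transitively on $\mathcal{C}(\Gamma(N))$ because $\Gamma$ acts transitively on $\mathbb{P}^1(\mathbb{Q})$. Since $\phi_N(\infty_N) = [\overline{(0,1)}]$ and $\phi_N$ intertwines the two actions, the image of $\phi_N$ is the full $\Gamma$-orbit of $[\overline{(0,1)}]$, which is all of $\Lambda_N^{\pm}$ by transitivity. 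This packages surjectivity through the equivariance rather than a bare-hands construction.

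The only genuine obstacle is the completion lemma — that a vector $(c,d)$ with $\gcd(c,d,N)=1$ has a representative mod $N$ that is a genuinely primitive integer vector, hence extends to an element of $\mathrm{SL}_2(\mathbb{Z})$. This is a standard but not entirely trivial number-theoretic fact (a weak form of strong approximation for $\mathrm{SL}_2$ over $\mathbb{Z}$), and I would either cite it or prove it via a short prime-by-prime adjustment of $c$. Everything else is a formal consequence of the already-established injectivity and equivariance, so I expect the proof to be brief once this lemma is granted.
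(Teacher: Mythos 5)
Your first route is exactly the paper's proof: given $\bar{\lambda} \in \Lambda_N$ with $\gcd$ of its entries and $N$ equal to $1$, lift to a genuinely coprime pair $(c',d')$ and observe that $x = (\tfrac{-d'}{c'})_N$ maps to $[\bar{\lambda}]$ under the defining formula, so the proposal is correct and matches the paper (the $\mathrm{SL}_2$-completion and the alternative transitivity packaging are harmless extras the paper does not need). The lifting fact you flag as the only obstacle is precisely the one-line step the paper also invokes without proof.
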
    
\begin{proof}
We need to check that $\phi_N$ is onto. Let $\overline{(\alpha, \beta)} \in \Lambda_N$. It is clear that $\gcd(\alpha, \beta, N) = 1$. Therefore $\overline{(\alpha, \beta)}$ admits a lift $(\alpha', \beta') \in \Lambda$ with $\gcd(\alpha', \beta') = 1$. Set $x = (\frac{-\beta'}{\alpha'})_N$. Then $\phi_N(x) = [\overline{(\alpha, \beta)}]$.    
\end{proof}

Next, we introduce the Eisenstein series on $\Gamma(N)$ that play the central role in our treatment of the Eisenstein series on a general congruence subgroup. Let $k \geq 2$ and $\bar{\lambda} \in \Lambda_N$. Set \[\epsilon_{N} = \begin{cases}
	1, & \text{if $N \geq 3$;}\\
	\frac{1}{2}, & \text{if $N \in \{1,2\}$.}
\end{cases}\] 
For a complex variable $s$ one defines the \textit{normalized} Eisenstein of weight $k$ on $\Gamma(N)$ by  
\begin{equation}
\label{2.2new}
E_{k}(\tau, \bar{\lambda}, N; s) := \epsilon_{N} \sum_{\substack{(c,d) \equiv \lambda (N),\\ \gcd(c,d) = 1}} \frac{\text{Im}(\tau)^s}{(c\tau+d)^k \lvert c\tau+d \rvert^{2s}}
\end{equation}
where $\lambda$ is a lift of $\bar{\lambda}$ to $\Lambda$. We also need a \textit{unnormalized} variant of the series above described as  
\begin{equation}
\label{2.3new}
G_{k}(\tau, \bar{\lambda}, N; s) := \sum_{\substack{(c,d) \equiv \lambda (N),\\ (c,d) \neq (0,0)}} \frac{\text{Im}(\tau)^s}{(c\tau+d)^k \lvert c\tau+d \rvert^{2s}}.
\end{equation}
These two series converge absolutely and uniformly on compact subsets of $\{s \mid \text{Re}(k+2s) > 2\}$ to define analytic functions of $s$ in this region. Assume that $\text{Re}(k + 2s) > 2$. A simple rearrangement argument now yields the following well-known identities: 
\begin{equation}
\label{2.4new}
\begin{gathered}
G_{k}(\tau, \bar{\lambda}, N; s) = \epsilon_{N}^{-1} \sum_{\bar{\delta} \in (\mathbb{Z}/N\mathbb{Z})^{\times}} \zeta_{+}^{(N)}(\delta, k+2s) E_k(\tau, \bar{\delta}^{-1}\bar{\lambda}, N; s),\\
E_{k}(\tau, \bar{\lambda}, N; s) = \epsilon_{N} \sum_{\bar{\delta} \in (\mathbb{Z}/N\mathbb{Z})^{\times}} \zeta_{+}^{(N)}(\mu; \delta, k+2s) G_k(\tau, \bar{\delta}^{-1}\bar{\lambda}, N; s).  
\end{gathered}
\end{equation}   
Here $\delta$ is a lift of $\bar{\delta}$ to $\mathbb{Z}$, $\mu$ is the M\"obius function, and 
\[\zeta_{+}^{(N)}(\delta, k+2s) := \sum_{\substack{n \geq 1, \\ n \equiv \delta (N)}} \frac{1}{n^{k+2s}},\; \zeta_{+}^{(N)}(\mu; \delta, k+2s) := \sum_{\substack{n \geq 1, \\ n \equiv \delta (N)}} \frac{\mu(n)}{n^{k+2s}}.\] Moreover for each $\gamma \in \Gamma$ we have 
\begin{equation}
\label{2.5new}
\begin{gathered}
E_{k}(\tau, \bar{\lambda}, N; s)\lvert_{k, \gamma} = E_{k}(\tau, \bar{\lambda}\gamma, N; s),\\
G_{k}(\tau, \bar{\lambda}, N; s)\lvert_{k, \gamma} = G_{k}(\tau, \bar{\lambda}\gamma, N; s).     
\end{gathered}  
\end{equation}
The function $G_{k}(\tau, \bar{\lambda}, N; s)$ admits an analytic continuation to the whole $s$-plane \cite[9.1]{shimura}. One uses \eqref{2.4new} to analytically continue $E_{k}(\tau, \bar{\lambda},N; s)$ to the larger domain $\{s \mid \text{Re}(k+2s) > 1\}$. It is clear that \eqref{2.5new} holds in the corresponding regions of analytic continuation. We abbreviate the specialization of the normalized, resp. unnormalized, series at $s = 0$ by 
$E_{k}(\tau, \bar{\lambda}, N)$, resp. $G_{k}(\tau, \bar{\lambda}, N)$. If $k \geq 3$ then $E_{k}(\tau, \bar{\lambda}, N)$ and $G_{k}(\tau, \bar{\lambda}, N)$ are holomorphic functions on $\mathbb{H}$. In general, as a function on $\mathbb{H}$ the Fourier expansion of the $G_{k}(\tau, \bar{\lambda}, N)$ is  
\begin{equation}
\label{2.6new}
\begin{gathered}
G_{k}(\tau, \bar{\lambda}, N) = \sum_{j \geq 0} A_{k,j}(\bar{\lambda}) \mathbf{e}(\frac{j\tau}{N}) + P_{k}(\tau);\hspace{.3cm}\substack{(k \geq 2)}\\
A_{k, 0}(\bar{\lambda}) := \mathbbm{1}_{\mathbb{Z}/N\mathbb{Z}}(\bar{\lambda}_1, 0) \sum_{\substack{n \in \mathbb{Z} - \{0\}, \\ n \equiv \lambda_2(N)}} \frac{1}{n^k},\;P_k(\tau) := -\mathbbm{1}_{\mathbb{Z}}(k,2)\frac{\pi}{N^2\text{Im}(\tau)}\\
A_{k,j}(\bar{\lambda}) := \frac{(-2\pi i)^k}{(k-1)! N^k} \sum_{\substack{r \in \mathbb{Z} - \{0\},\\ r \mid j, \frac{j}{r}\equiv \lambda_1(N)}} \text{sgn}(r)r^{k-1}\mathbf{e}(\frac{r\lambda_2}{N}) \hspace{.4cm}(\substack{j \geq 1})
\end{gathered}
\end{equation}
where $(\lambda_1, \lambda_2)$ is a lift of $\bar{\lambda}$ to $\Lambda$ (\textit{loc. cit.}). It follows that $G_2(\tau, \bar{\lambda}, N) \in -\frac{\pi}{N^2\text{Im}(\tau)} + \text{Hol}(\mathbb{H})$. We employ \eqref{2.4new} to conclude  
\begin{equation}
\label{2.7new}
E_2(\tau, \bar{\lambda}, N) \in - \frac{6\epsilon_{N}}{\pi N^2 \prod_{p \mid N}(1 - \frac{1}{p^2})\text{Im}(\tau)} + \text{Hol}(\mathbb{H}) \subseteq \mathcal{C}^{\omega}(\mathbb{H}). 
\end{equation}
The Fourier expansion formula \eqref{2.6new} shows that the value of the $G$-series at $\infty$ is given by $\pi_{\infty}\big(G_{k}(\tau, \bar{\lambda}, N)\big) = A_{k, 0}(\bar{\lambda})$. Now, one uses \eqref{2.4new} and the M\"obius inversion identity to discover that  
\begin{equation}
\label{2.8new}
\begin{aligned}
&\pi_{\infty}\big(E_{k}(\tau, \bar{\lambda}, N)\big)\\  &=\epsilon_{N}\mathbbm{1}_{\mathbb{Z}/N\mathbb{Z}}(\bar{\lambda}_1,0)\big(\mathbbm{1}_{\mathbb{Z}/N\mathbb{Z}}(\bar{\lambda}_2, 1) + (-1)^k\mathbbm{1}_{\mathbb{Z}/N\mathbb{Z}}(\bar{\lambda}_2, -1)\big).    
\end{aligned}
\end{equation} 
If $N \geq 3$ then at most one of $\mathbbm{1}_{\mathbb{Z}/N\mathbb{Z}}(\bar{\lambda}_2, \pm 1)$ is nonzero. But, for $N = 1, 2$, both of $\mathbbm{1}_{\mathbb{Z}/N\mathbb{Z}}(\bar{\lambda}_2, \pm 1)$ can be nonzero. This phenomenon demystifies the presence of the factor $\epsilon_{N}$. 

\begin{remark}
\label{remark2.3}
Readers interested in the nonholomorphic aspect of the theory may consult the proof of Theorem~A3.5 in \cite[p.134]{shimura} for a full Fourier expansion of the unnormalized series given in terms of the Whittaker function. 
\end{remark}
				
Finally, we explain how to use the Eisenstein series introduced above to construct the space of Eisenstein series and choose a basis for this space parameterized by cusps. To provide uniform statements for all weights $\geq 2$, one first introduces the notion of an extended space of Eisenstein series that contains nonholomorphic functions if $k = 2$.

Let $k$ be an integer $\geq 2$ and $\mathbbm{k}$ be a subring of $\mathbb{C}$. We define the \textit{extended} space of Eisenstein series with coefficients in $\mathbbm{k}$, denoted $\mathcal{E}^{\ast}_{k}\big(\Gamma(N), \mathbbm{k}\big)$, as the $\mathbbm{k}$-submodule generated by $\{E_{k}(\bar{\lambda}, N) \mid \bar{\lambda} \in \Lambda_N\}$ inside the space of real analytic functions $\mathcal{C}^{\omega}(\mathbb{H})$. 
The transformation formula in \eqref{2.5new} implies that $\mathcal{E}^{\ast}_{k}\big(\Gamma(N), \mathbbm{k}\big)$ is stable under the $\lvert_{k}$-action of $\Gamma$. If $k$ is odd and $N \in \{1,2\}$ then $\mathcal{E}^{\ast}_{k}\big(\Gamma(N), \mathbbm{k}\big) = \{0\}$. Assume that either $k$ is even or $N \geq 3$. We have a well-defined $\mathbbm{k}$-linear map \[\pi_{\infty}: \mathcal{E}^{\ast}_{k}\big(\Gamma(N), \mathbbm{k}\big) \to \mathbb{C}; \hspace{.3cm}f \mapsto \lim_{t \to \infty} f(it).\] 
The identity \eqref{2.8new} shows that the function $\pi_{\infty}\big(E_{k}(\cdot, N)\big)$ acts like the indicator of the subset $\{\pm \overline{(0, 1)}\} \subseteq \Lambda_N$. Now suppose $\mathcal{A}$ is a set of representatives for the orbits of the $\{\pm \text{Id}\}$-action on $\Lambda_N$. Let $\Pi_N: \Lambda_N \to \Lambda_N^{\pm}$ be the canonical $\Gamma$-equivariant projection. Write $\Pi_{N, \mathcal{A}} := \Pi_N\bigl\lvert_{\mathcal{A}}$. Note that $\Pi_{N, \mathcal{A}}$ is a bijection. Define  
\begin{equation}
\label{2.9new}
\Phi_{N, \mathcal{A}}: \mathcal{C}\big(\Gamma(N)\big) \to \mathcal{E}^{\ast}_k\big(\Gamma(N), \mathbbm{k}\big); \hspace{.3cm} x \mapsto E_k\big(\Pi_{N, \mathcal{A}}^{-1}\circ \phi_N(x), N\big). 
\end{equation}

A familiar argument involving the indicator function property of $\pi_{\infty}$ and the transitivity of $\Gamma$-action on $\Lambda_{N}^{\pm}$ shows that $\{\Phi_{N, \mathcal{A}}(x) \mid x \in \mathcal{C}\big(\Gamma(N)\big)\}$ is a $\mathbbm{k}$-basis for $\mathcal{E}^{\ast}_k\big(\Gamma(N), \mathbbm{k}\big)$; see Theorem~4.2.3 in \cite[4.2]{ds}. It is clear that $\Phi_{N, \mathcal{A}}$ is independent of $\mathcal{A}$ if $k$ is even. Now suppose $\gamma \in \Gamma$. Since both $\phi_{N}$ and $\Pi_{N}$ are $\Gamma$-equivariant, it follows that $\Pi_{N, \mathcal{A}}^{-1} \circ \phi_{N}(\gamma x) = \pm \Pi_{N, \mathcal{A}}^{-1} \circ \phi_{N}(x)\gamma^{-1}$. As a consequence,  
\begin{equation}
\label{2.10new}
\Phi_{N, \mathcal{A}}(\gamma x) = \varepsilon_{\gamma}(x)^k \Phi_{N, \mathcal{A}}(x)\lvert_{\gamma^{-1}}. \hspace{.3cm}(\substack{\varepsilon_{\gamma}(x) \in \{\pm 1\}, \; x \in \mathcal{C}(\Gamma(N))})
\end{equation}
This reformulation of the classical description of basis plays a key role in our construction of the spectral basis on a general congruence subgroup, especially in the odd weight case. 

Let the notation be as above. One defines the (holomorphic) space of Eisenstein series on $\Gamma(N)$ with coefficients in $\mathbbm{k}$ to be \[\mathcal{E}_k\big(\Gamma(N), \mathbbm{k}\big) := \mathcal{E}^{\ast}_k\big(\Gamma(N), \mathbbm{k}\big) \cap \text{Hol}(\mathbb{H}).\] 
It is clear that $\mathcal{E}_k\big(\Gamma(N), \mathbbm{k}\big)$ is also stable under $\lvert_{k}$-action of $\Gamma$. If $k \geq 3$ then the spaces $\mathcal{E}^{\ast}_k\big(\Gamma(N), \mathbbm{k}\big)$ and $\mathcal{E}_k\big(\Gamma(N), \mathbbm{k}\big)$ coincide. A straight forward computation using \eqref{2.7new} shows that $\mathcal{E}_2\big(\Gamma(N), \mathbbm{k}\big)$ equals 
\begin{equation}
\label{2.11new}
\big\{\sum_{x \in \mathcal{C}(\Gamma(N))} C_x \Phi_{N, \mathcal{A}}(x) \mid \sum_{x} C_x = 0\big\}.	
\end{equation}
Moreover, with the notation of \eqref{2.9new}, the collection $\big\{\Phi_{N, \mathcal{A}}(x) - \Phi_{N, \mathcal{A}}(\infty_N) \mid x \neq \infty_N\big\}$ is a $\mathbbm{k}$-basis for the free $\mathbbm{k}$-module $\mathcal{E}_2\big(\Gamma(N), \mathbbm{k}\big)$.    
	
It remains to check that the space $\mathcal{E}_{k}\big(\Gamma(N), \mathbb{C}\big)$ satisfies the abstract criteria of the space of Eisenstein series provided at the beginning of the introduction. The explicit bases constructed using \eqref{2.9new} ensures that $\mathcal{E}_{k}\big(\Gamma(N), \mathbb{C}\big)$ is linearly disjoint from $\mathcal{S}_k\big(\Gamma(N)\big)$. A standard calculation in classical theory demonstrates that the $E$-series is orthogonal to the space of cusp forms \cite[5.11]{ds}. To complete the argument we need to verify that $\mathcal{E}_{k}\big(\Gamma(N),\mathbb{C}\big)$ and $\mathcal{S}_{k}\big(\Gamma(N)\big)$ together span $\mathcal{M}_k\big(\Gamma(N)\big)$. One way to achieve this result is to invoke dimension formulas for $\mathcal{M}_k\big(\Gamma(N)\big)$ and $\mathcal{S}_k\big(\Gamma(N)\big)$. However, if $k \geq 3$ then it is possible to write down a direct proof.    
	
\begin{proposition}
\label{proposition2.4}
Let $k \geq 3$. Then \[\mathcal{M}_k\big(\Gamma(N)\big) = \mathcal{E}_{k}\big(\Gamma(N), \mathbb{C}\big) + \mathcal{S}_k\big(\Gamma(N)\big).\] 	
\end{proposition}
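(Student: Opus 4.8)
The plan is to combine the cusp-parameterized basis $\{\Phi_{N,\mathcal{A}}(x)\}$ of $\mathcal{E}_k\big(\Gamma(N),\mathbb{C}\big)$ with the indicator-function property of $\pi_{\infty}$ recorded in \eqref{2.8new} to match the values of an arbitrary modular form at the cusps. Since $k \geq 3$, every $E_k(\bar\lambda, N)$ is holomorphic, so $\mathcal{E}_k\big(\Gamma(N),\mathbb{C}\big) = \mathcal{E}^{\ast}_k\big(\Gamma(N),\mathbb{C}\big) \subseteq \mathcal{M}_k\big(\Gamma(N)\big)$, and the linear disjointness from $\mathcal{S}_k\big(\Gamma(N)\big)$ noted above shows the sum $\mathcal{E}_k + \mathcal{S}_k$ is direct. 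It therefore suffices to prove that every $f \in \mathcal{M}_k\big(\Gamma(N)\big)$ differs from an element of $\mathcal{E}_k\big(\Gamma(N),\mathbb{C}\big)$ by a cusp form. If $k$ is odd and $N \in \{1,2\}$ then $-\text{Id} \in \Gamma(N)$ forces $\mathcal{M}_k\big(\Gamma(N)\big) = 0$ and the claim is vacuous; so I may assume that either $k$ is even or $N \geq 3$, in which case every cusp of $\Gamma(N)$ is regular.

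First I would introduce the evaluation-at-cusps map. For $y \in \mathcal{C}\big(\Gamma(N)\big)$ fix $\gamma_y \in \Gamma$ with $\gamma_y \infty_N = y$ and set $v_y(f) := \pi_{\infty}\big(f\lvert_{k, \gamma_y}\big)$; holomorphicity guarantees the defining limit exists. Assembling these over all cusps yields a $\mathbb{C}$-linear map $v: \mathcal{M}_k\big(\Gamma(N)\big) \to \mathbb{C}^{\mathcal{C}(\Gamma(N))}$ whose kernel is, by the definition of vanishing at the cusps, exactly $\mathcal{S}_k\big(\Gamma(N)\big)$. The proposition then reduces to showing that $v$ restricted to $\mathcal{E}_k\big(\Gamma(N),\mathbb{C}\big)$ is surjective: granting this, for any $f$ there is an $E \in \mathcal{E}_k\big(\Gamma(N),\mathbb{C}\big)$ with $v(E) = v(f)$, whence $f - E \in \ker v = \mathcal{S}_k\big(\Gamma(N)\big)$ and $f = E + (f-E)$.

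The heart of the matter is the computation of $v$ on the basis $\{\Phi_{N,\mathcal{A}}(x)\}_{x \in \mathcal{C}(\Gamma(N))}$. By the transformation formula \eqref{2.5new}, $\Phi_{N,\mathcal{A}}(x)\lvert_{k, \gamma_y} = E_k\big(\Pi_{N,\mathcal{A}}^{-1}\circ\phi_N(x)\cdot\gamma_y, N\big)$, so \eqref{2.8new} shows that $v_y\big(\Phi_{N,\mathcal{A}}(x)\big)$ is a nonzero multiple of the indicator of the event $\big[\Pi_{N,\mathcal{A}}^{-1}\circ\phi_N(x)\cdot\gamma_y\big] = \big[\overline{(0,1)}\big]$ in $\Lambda_N^{\pm}$. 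Because $\phi_N$ is a $\Gamma$-equivariant bijection with $\phi_N(\infty_N) = \big[\overline{(0,1)}\big]$, this event is equivalent to $\phi_N(\gamma_y^{-1}x) = \phi_N(\infty_N)$, i.e. to $x = y$. Hence $v_y\big(\Phi_{N,\mathcal{A}}(x)\big) \neq 0$ precisely when $x = y$, so the matrix $\big(v_y(\Phi_{N,\mathcal{A}}(x))\big)_{x,y}$ is diagonal with nonvanishing diagonal; the vectors $v\big(\Phi_{N,\mathcal{A}}(x)\big)$ consequently span $\mathbb{C}^{\mathcal{C}(\Gamma(N))}$, which is the required surjectivity.

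I expect the main obstacle to be the bookkeeping that makes $v_y$ well defined with kernel exactly $\mathcal{S}_k\big(\Gamma(N)\big)$: this is precisely where one needs $-\text{Id} \notin \Gamma(N)$ (equivalently, $N \geq 3$ or $k$ even), so that the regularity of every cusp rules out any sign ambiguity in $\pi_{\infty}\big(f\lvert_{k,\gamma_y}\big)$ and in the nonvanishing of the diagonal entries computed via \eqref{2.8new}. Once \eqref{2.8new} is available the remaining argument is the standard triangularity observation, and, notably, no dimension formula for $\mathcal{M}_k\big(\Gamma(N)\big)$ or $\mathcal{S}_k\big(\Gamma(N)\big)$ enters.
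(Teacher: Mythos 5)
Your proposal is correct and is essentially the paper's own argument in linear-algebraic packaging: the paper directly defines $f_c = f - \sum_j \pi_{\infty}(f\lvert_{\gamma_j})E_k\big(\overline{(0,1)}, N\big)\lvert_{\gamma_j^{-1}}$ over double coset representatives and verifies $f_c \in \mathcal{S}_k\big(\Gamma(N)\big)$ using the identity $\{\gamma \mid \overline{(0,1)}\gamma = \pm\overline{(0,1)}\} = \Gamma_{\infty}\Gamma(N)$ and the sign relation $\pi_{\infty}(f\lvert_{-\gamma}) = (-1)^k\pi_{\infty}(f\lvert_{\gamma})$, which are exactly your diagonal-matrix computation via \eqref{2.8new} and your kernel identification for $v$. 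The only blemish is the parenthetical ``equivalently, $N \geq 3$ or $k$ even'' --- $-\text{Id} \notin \Gamma(N)$ is equivalent to $N \geq 3$ alone --- but this does not affect the argument, since you correctly assumed $k$ even or $N \geq 3$ at the outset.
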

	
\begin{proof}
If $k$ is odd and $N \in \{1,2\}$ then there is nothing to prove. We assume that $N \geq 3$ whenever $k$ is odd. Let $f \in \mathcal{M}_k\big(\Gamma(N)\big)$. Then $\pi_{\infty}(f\lvert_{\gamma})$ depends only on the image of $\gamma$ in $\Gamma(N) \backslash \Gamma / \Gamma^{+}_{\infty}$ where $\Gamma^{+}_{\infty} = \{T^n \mid n \in \mathbb{Z}\}$. Let $\{\gamma_1, \ldots, \gamma_n\}$ be a collection of representatives for the double cosets in $\Gamma(N) \backslash \Gamma / \Gamma_{\infty}$. Define $f_{\text{c}} = f - \sum_{j = 1}^{n} \pi_{\infty}(f\vert_{ \gamma_j})E_{k}\big(\overline{(0, 1)}, N\big)\lvert_{\gamma_{j}^{-1}}$. We need to show that $f_c \in \mathcal{S}_k\big(\Gamma(N)\big)$. Note that $\{\gamma \mid \overline{(0, 1)}\gamma = \pm \overline{(0, 1)}\} = \Gamma_{\infty}\Gamma(N)$ and $\gamma_{j}^{-1}\gamma_r \notin \Gamma_{\infty}\Gamma(N)$ whenever $j \neq r$. Therefore $\pi_{\infty}(f_{c}\lvert_{\gamma_{j}}) = 0$ for each $1 \leq j \leq n$. Since $\pi_{\infty}(f_{c}\lvert_{-\gamma}) = (-1)^k \pi_{\infty}(f_{c}\lvert_{\gamma})$ it follows that  $\pi_{\infty}(f_{c}\lvert_{\gamma}) = 0$ for each $\gamma \in \Gamma$. Hence $f_c \in \mathcal{S}_{k}\big(\Gamma(N)\big)$ as required.        
\end{proof}

\section{Group action on the mod $N$ lattice}
\label{section3} 
Let $\mathcal{G}$ be a congruence subgroup containing $\Gamma(N)$. A naive attempt to construct $\mathcal{G}$-invariant combinations inside $\mathcal{E}_k\big(\Gamma(N)\big)$ in the spirit of \eqref{1.1} leads to the study of $\mathcal{G}$-orbits inside $\mathcal{C}\big(\Gamma(N)\big)$. Let $\mathcal{C}(\mathcal{G}, N)$ denote the collection of $\mathcal{G}$-orbits in $\mathcal{C}\big(\Gamma(N)\big)$. There is a natural bijection  
\begin{equation*}
	\text{or}_{\mathcal{G},N}: \mathcal{C}(\mathcal{G}) \xrightarrow{\cong} \mathcal{C}\big(\mathcal{G}, N\big); \hspace{.3cm} \mathcal{G}x \mapsto \mathcal{G}\big(\Gamma(N)x\big)
\end{equation*}
that sends a cusp of $\mathcal{G}$ to the corresponding $\mathcal{G}$ orbit in $\mathcal{C}\big(\Gamma(N)\big)$. The map $\phi_{N}$ in Section~\ref{section2} induces a bijection between $\mathcal{C}(\mathcal{G}, N)$ and the collection of $\mathcal{G}$ orbits in $\Lambda_{N}^{\pm}$. However, the latter correspondence is not sufficient for our purpose of constructing a basis since here one needs to keep track of the sign ambiguity along the orbits of $\mathcal{G}$ in $\Lambda_{N}$.  

Let $\mathcal{O} \subseteq \Lambda_{N}$ be an orbit for the action of $\mathcal{G}$ on $\Lambda_{N}$. Then $-\mathcal{O}$ is also $\mathcal{G}$-orbit in $\Lambda_{N}$ and there are two mutually disjoint possibilities, namely, $\mathcal{O} \cap (-\mathcal{O}) = \emptyset$ or $\mathcal{O} = -\mathcal{O}$. We say a $\mathcal{G}$-orbit $\mathcal{O}$ in $\Lambda_N$ is \textit{regular}, resp. \textit{irregular}, if $\mathcal{O} \cap (-\mathcal{O}) = \emptyset$, resp. $\mathcal{O} = - \mathcal{O}$. Note that an orbit $\mathcal{O}$ is regular, resp. irregular, if and only if $-\mathcal{O}$ is also regular, resp. irregular. The following result relates the regularity of orbits to the regularity of cusps \cite[p.75]{ds}. Let $\mathcal{C}_{\infty}(\mathcal{G})$ be the collection of regular cusps of $\mathcal{G}$. Suppose that $\mathcal{C}_{\infty}(\mathcal{G}, N)$ is the image of $\mathcal{C}_{\infty}(\mathcal{G})$ under $\text{or}_{\mathcal{G},N}$.  

\begin{proposition}
\label{proposition3.1new}\emph{(cf. \cite[4]{cummins et al})} 
Assume $-\emph{Id} \notin \mathcal{G}$. Then, a $\mathcal{G}$-orbit $\mathcal{O}$ in $\Lambda_{N}$ is regular if and only if \[\phi_N^{-1}([\mathcal{O}]) \in \mathcal{C}_{\infty}(\mathcal{G},N)\] where $[\cdot]$ denotes the image in $\Lambda_N^{\pm}$.   
\end{proposition}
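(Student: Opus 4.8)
The plan is to translate both sides of the claimed equivalence into statements about the action of $\mathcal{G}$ on $\Lambda_N$ and then to prove the cleaner contrapositive form: a $\mathcal{G}$-orbit $\mathcal{O}$ is irregular if and only if the associated cusp is irregular. First I would observe that the hypothesis $-\text{Id}\notin\mathcal{G}$ together with $\Gamma(N)\subseteq\mathcal{G}$ forces $N\geq 3$, since $-\text{Id}\in\Gamma(N)$ whenever $N\in\{1,2\}$; this is exactly what makes the sign bookkeeping below unambiguous, because it guarantees $-T^n\notin\Gamma_1(N)$.

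Set $\nu:=\overline{(0,1)}$, so that $\nu\gamma=\nu$ for $\gamma\in\Gamma$ precisely when $\gamma\in\Gamma_1(N)$. Fix $\bar\lambda\in\mathcal{O}$ and a representative $x=\alpha/\beta\in\mathbb{P}^1(\mathbb{Q})$ with $\gcd(\alpha,\beta)=1$ of the cusp $\phi_N^{-1}([\mathcal{O}])$, normalized so that $\bar\lambda=\overline{(\beta,-\alpha)}$. Choosing a scaling matrix $\sigma_x=\left(\begin{smallmatrix}\alpha&\ast\\\beta&\ast\end{smallmatrix}\right)\in\Gamma$ with $\sigma_x\infty=x$, the determinant condition gives the one-line identity $\bar\lambda\sigma_x=-\nu$, so that $\mu:=-\bar\lambda$ satisfies $\mu\sigma_x=\nu$. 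Since $\mathcal{O}$ and $-\mathcal{O}$ have the same regularity, irregularity of $\mathcal{O}$ becomes the existence of $g\in\mathcal{G}$ with $\mu g=-\mu$, equivalently $\bar\lambda g=-\bar\lambda$. On the cusp side, $x$ is irregular precisely when the group $\sigma_x^{-1}\mathcal{G}_x\sigma_x\subseteq\Gamma_\infty=\{\pm T^n\}$ contains an element of the form $-T^m$.

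For the easy direction I would start from $-T^m\in\sigma_x^{-1}\mathcal{G}_x\sigma_x$, set $g:=\sigma_x(-T^m)\sigma_x^{-1}\in\mathcal{G}_x\subseteq\mathcal{G}$, and compute $\mu g=\nu(-T^m)\sigma_x^{-1}=(-\nu)\sigma_x^{-1}=-\mu$, exhibiting irregularity of the orbit. The substantial direction is the converse, and this is where I expect the main difficulty. Given $g\in\mathcal{G}$ with $\bar\lambda g=-\bar\lambda$, the element $g$ need not fix the cusp $x$, so one cannot directly read a sign off $\sigma_x^{-1}g\sigma_x$. The remedy is to correct $g$ by a principal-level element: from $[\bar\lambda]g=[\bar\lambda]$ in $\Lambda_N^{\pm}$, the $\Gamma$-equivariance and injectivity of $\phi_N$ yield $g^{-1}\cdot x_N=x_N$ in $\mathcal{C}(\Gamma(N))$, hence $\gamma g^{-1}x=x$ in $\mathbb{P}^1(\mathbb{Q})$ for some $\gamma\in\Gamma(N)$. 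Because $\Gamma(N)$ is normal in $\Gamma$ it lies in $\mathcal{G}$, so $p:=\gamma g^{-1}\in\mathcal{G}_x$ and $\sigma_x^{-1}p\sigma_x\in\Gamma_\infty$.

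Finally I would pin down the sign of this parabolic element by letting it act on $\nu$: using that $\gamma\in\Gamma(N)$ acts trivially on $\Lambda_N$, one finds $\nu(\sigma_x^{-1}p\sigma_x)=(\nu\sigma_x^{-1})p\sigma_x=\mu p\sigma_x=(\mu g^{-1})\sigma_x=-\mu\sigma_x=-\nu$. Since for $N\geq 3$ the only elements of $\{\pm T^n\}$ sending $\nu$ to $-\nu$ are the $-T^n$, this forces $\sigma_x^{-1}p\sigma_x=-T^n$ for some $n$, so $x$ is irregular. Assembling the two directions and passing to contrapositives then gives the stated equivalence between regular orbits and regular cusps, i.e. $\mathcal{O}$ is regular if and only if $\phi_N^{-1}([\mathcal{O}])\in\mathcal{C}_\infty(\mathcal{G},N)$. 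The crux throughout is the converse step: turning an element that merely flips $\bar\lambda$ into a genuine cusp-fixing element of negative sign, which succeeds only because one may absorb a $\Gamma(N)$-factor that is invisible to the action on $\Lambda_N$.
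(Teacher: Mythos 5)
Your argument is correct and is essentially the paper's proof: the direction from an irregular cusp to an irregular orbit is identical (conjugate $-T^m$ by the scaling matrix into $\mathcal{G}_x$), and your key converse step --- absorbing a $\Gamma(N)$-correction, invisible on $\Lambda_N$, so that $p=\gamma g^{-1}\in\mathcal{G}_x$ and $\sigma_x^{-1}p\sigma_x=-T^n$ --- is the same mechanism as the paper's explicit factorization $\gamma_0\gamma\gamma_0^{-1}=g_1(\gamma)g_2(\gamma)$ with $g_2(\gamma)\in\Gamma(N)$ and $g_1(\gamma)=\left(\begin{smallmatrix}-1&b(\gamma)\\0&-1\end{smallmatrix}\right)$, which the paper extracts by congruence conditions on the matrix entries rather than via the equivariance and injectivity of $\phi_N$. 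One small correction: the containment $\Gamma(N)\subseteq\mathcal{G}$ that you invoke is the standing hypothesis of Section~\ref{section3}, not a consequence of the normality of $\Gamma(N)$ in $\Gamma$; this slip does not affect the argument.
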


Observe that if $-\text{Id} \in \mathcal{G}$, then each $\mathcal{G}$-orbit is irregular in the sense defined above, though every cusp of $\mathcal{G}$ is regular. 
	
\begin{proof}
We choose $\gamma_{0} \in \Gamma$ so that $\mathcal{O} = \bar{\lambda}\mathcal{G}$ where $\bar{\lambda} = \overline{(0, 1)}\gamma_0$. Then $\text{or}_{\mathcal{G},N}^{-1}\big(\phi_N^{-1}([\mathcal{O}])\big) = \mathcal{G}(\gamma_{0}^{-1}\infty)$. Suppose $\gamma \in \mathcal{G}$. We have 
\begin{equation} 
\label{3.1}
\bar{\lambda}\gamma = - \bar{\lambda}\iff \overline{(0,1)}\gamma_{0}\gamma\gamma_{0}^{-1} =  \overline{(0, -1)}.
\end{equation}
Write $\gamma_{0}\gamma\gamma_{0}^{-1} = \begin{pmatrix}
			a(\gamma) & b(\gamma)\\
			c(\gamma) & d(\gamma)
		\end{pmatrix}$. 
Then \eqref{3.1} holds if and only if $c(\gamma) \equiv 0($mod $N)$ and $a(\gamma) \equiv d(\gamma) \equiv - 1 ($mod $N)$. Set \[\text{$g_1(\gamma) = \begin{pmatrix}
			- 1 & b(\gamma)\\
			0 & -1
		\end{pmatrix}$ and $g_2(\gamma) = g_1(\gamma)^{-1}\gamma_{0}\gamma\gamma_{0}^{-1}$.}\]
It is easy to see that \eqref{3.1} holds if and only if $g_2(\gamma) \in \Gamma(N)$. Let $\mathcal{O}$ be irregular, i.e., there exists $\gamma \in \mathcal{G}$ with $\bar{\lambda}\gamma = - \bar{\lambda}$. Then $g_2(\gamma) \in \Gamma(N)$ and $g_1(\gamma) \in \gamma_{0}\mathcal{G}\gamma_{0}^{-1}$. Since $-\text{Id} \notin \gamma_{0}\mathcal{G}\gamma_{0}^{-1}$, it follows that $\mathcal{G}(\gamma_{0}^{-1}\infty)$ is an irregular cusp of $\mathcal{G}$. Conversely, assume that $\mathcal{G}(\gamma_{0}^{-1}\infty)$ is an irregular cusp of $\mathcal{G}$. Then $\exists h \geq 1$ so that $- T^h \in \gamma_0\mathcal{G}\gamma_{0}^{-1}$. Put $\gamma = -\gamma_{0}^{-1} T^h \gamma_{0} \in \mathcal{G}$. It is clear that $\bar{\lambda}\gamma = - \bar{\lambda}$ and therefore $\mathcal{O} = -\mathcal{O}$.  
\end{proof} 

Now, we explain how to use regular orbits to discard the sign ambiguity appearing in the transformation formula \eqref{2.10new}. For this purpose, one needs to work with a special kind of choice of representatives for the $\{\pm \text{Id}\}$ action on $\Lambda_{N}$ that is consistent with the $\mathcal{G}$-action on $\Lambda_{N}$. A choice of representatives $\mathcal{A}$ for the $\{\pm \text{Id}\}$-action is $\mathcal{G}$-\textit{admissible} if each regular $\mathcal{G}$-orbit $\mathcal{O}$ in $\Lambda_{N}$ satisfies the following implication: \[\mathcal{O} \cap \mathcal{A} \neq \emptyset \implies \mathcal{O} \subseteq \mathcal{A}.\]
One can construct an admissible choice of representatives $\mathcal{A}$ as follows. 
Let $\Lambda_{N}^{\pm} = \amalg_{j \in \mathcal{J}} \theta_{j}\mathcal{G}$ be the orbit decomposition of $\Lambda_{N}^{\pm}$ under the action of $\mathcal{G}$. Choose $\bar{\lambda}_j \in \Pi_{N}^{-1}(\theta_j)$ for each $j \in \mathcal{J}$, where $\Pi_{N}: \Lambda_{N} \to \Lambda_{N}^{\pm}$ is the natural projection. Fix a set of representatives $\mathcal{A}$ as follows. If $\bar{\lambda}_j$ lies in a regular orbit for some $j \in \mathcal{J}$ then the representative of $\theta_{j}\gamma$ is $\bar{\lambda}_j\gamma$ for each $\gamma \in \mathcal{G}$. This is possible since $\Pi_{N}$ is $\Gamma$-equivariant and maps $\bar{\lambda}_j\mathcal{G}$ bijectively onto $\theta_j\mathcal{G}$. If $\bar{\lambda}_j$ lies in an irregular orbit, then we choose any element of $\Pi_{N}^{-1}(\theta)$ to be the lift of $\theta \in \theta_{j}\mathcal{G}$. The following lemma displays the utility of the concept of admissibility. 

\begin{lemma}
\label{lemma3.2new}
Assume that $-\emph{Id} \notin \mathcal{G}$ and let $\mathcal{A}$ be a $\mathcal{G}$-admissible set of representatives. Suppose $y \in \mathcal{C}\big(\Gamma(N)\big)$ with $\mathcal{G}y \in \mathcal{C}_{\infty}(\mathcal{G},N)$. Then $\Phi_{N, \mathcal{A}}(\gamma y) =\Phi_{N, \mathcal{A}}(y)\lvert_{\gamma^{-1}}$, i.e., $\varepsilon_{\gamma}(y) = 1$ for each $\gamma \in \mathcal{G}$. 
\end{lemma}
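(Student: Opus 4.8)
The plan is to unwind the definition of the sign $\varepsilon_\gamma(y)$ appearing in \eqref{2.10new} and reduce the whole claim to a single membership statement in $\mathcal{A}$. Set $\bar{\lambda} := \Pi_{N, \mathcal{A}}^{-1}\circ \phi_N(y) \in \mathcal{A}$, so that by \eqref{2.9new} we have $\Phi_{N,\mathcal{A}}(y) = E_k(\bar{\lambda}, N)$ and $\Pi_N(\bar{\lambda}) = \phi_N(y)$. Since $\phi_N$ and $\Pi_N$ are both $\Gamma$-equivariant, the element $\bar{\lambda}\gamma^{-1}$ satisfies $\Pi_N(\bar{\lambda}\gamma^{-1}) = \phi_N(y)\gamma^{-1} = \phi_N(\gamma y)$, so $\bar{\lambda}\gamma^{-1}$ is a lift of $\phi_N(\gamma y)$ to $\Lambda_N$. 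As $\Pi_{N,\mathcal{A}}^{-1}\circ\phi_N(\gamma y)$ is by definition the unique representative of $\phi_N(\gamma y)$ lying in $\mathcal{A}$, the two lifts of $\phi_N(\gamma y)$ differ by a sign, which is exactly $\varepsilon_\gamma(y)$; that is, $\Pi_{N,\mathcal{A}}^{-1}\circ\phi_N(\gamma y) = \varepsilon_\gamma(y)\,\bar{\lambda}\gamma^{-1}$. Consequently $\varepsilon_\gamma(y) = 1$ holds if and only if $\bar{\lambda}\gamma^{-1} \in \mathcal{A}$, and this is the statement I would establish.

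Next I would identify the $\mathcal{G}$-orbit through $\bar{\lambda}$ as a regular orbit. Let $\mathcal{O} := \bar{\lambda}\mathcal{G}$ be the orbit of $\bar{\lambda}$ in $\Lambda_N$. Its image in $\Lambda_N^{\pm}$ is $[\mathcal{O}] = \Pi_N(\bar{\lambda})\mathcal{G} = \phi_N(y)\mathcal{G}$, and since $\phi_N(y)\gamma^{-1} = \phi_N(\gamma y)$ we have $[\mathcal{O}] = \phi_N(\mathcal{G}y)$, so $\phi_N^{-1}([\mathcal{O}])$ is precisely the orbit $\mathcal{G}y \in \mathcal{C}(\mathcal{G},N)$. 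By hypothesis $\mathcal{G}y \in \mathcal{C}_\infty(\mathcal{G}, N)$, and because $-\text{Id}\notin\mathcal{G}$, Proposition~\ref{proposition3.1new} applies and shows that $\mathcal{O}$ is regular.

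Finally I would invoke admissibility to conclude. By construction $\bar{\lambda}\in\mathcal{A}$ and $\bar{\lambda}\in\mathcal{O}$, so $\mathcal{O}\cap\mathcal{A}\neq\emptyset$. Since $\mathcal{A}$ is $\mathcal{G}$-admissible and $\mathcal{O}$ is regular, the defining implication of admissibility yields $\mathcal{O}\subseteq\mathcal{A}$. As $\gamma^{-1}\in\mathcal{G}$, we obtain $\bar{\lambda}\gamma^{-1}\in\bar{\lambda}\mathcal{G}=\mathcal{O}\subseteq\mathcal{A}$, which is exactly the membership needed in the first step. Therefore $\varepsilon_\gamma(y)=1$, and \eqref{2.10new} collapses to $\Phi_{N,\mathcal{A}}(\gamma y)=\Phi_{N,\mathcal{A}}(y)\lvert_{\gamma^{-1}}$.

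I expect the only delicate point to be the bookkeeping in the first paragraph: one must verify that the sign $\varepsilon_\gamma(y)$ of \eqref{2.10new} is literally the sign relating $\bar{\lambda}\gamma^{-1}$ to its $\mathcal{A}$-representative, and that the identification $\phi_N^{-1}([\mathcal{O}]) = \mathcal{G}y$ is compatible with the bijection $\text{or}_{\mathcal{G},N}$ used to define $\mathcal{C}_\infty(\mathcal{G},N)$, so that Proposition~\ref{proposition3.1new} may be applied to the correct orbit. Once these identifications are pinned down, regularity of $\mathcal{O}$ together with admissibility closes the argument at once, and no analytic information about the Eisenstein series $E_k(\bar{\lambda},N)$ themselves is needed.
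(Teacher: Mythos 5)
Your proposal is correct and is exactly the argument the paper intends: its one-line proof (``Follows from Proposition~\ref{proposition3.1new} and the discussion above'') compresses precisely your three steps --- identifying $\varepsilon_{\gamma}(y)$ as the sign relating $\bar{\lambda}\gamma^{-1}$ to its $\mathcal{A}$-representative, using Proposition~\ref{proposition3.1new} to see that the orbit $\bar{\lambda}\mathcal{G}$ is regular, and invoking admissibility to get $\bar{\lambda}\gamma^{-1} \in \mathcal{A}$. You have simply made explicit the bookkeeping the paper leaves to the reader, including the correct observation that for $N \geq 3$ (forced by $-\mathrm{Id} \notin \mathcal{G}$) the two lifts $\pm\bar{\lambda}\gamma^{-1}$ are distinct, so the sign is well defined.
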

\begin{proof}
Follows from Proposition~\ref{proposition3.1new} and the discussion above. 
\end{proof}

Let $x \in \mathbb{P}^{1}(\mathbb{Q})$. Our next job is to determine the size of $\text{or}_{\mathcal{G},N}(x)$. Note that the stabilizer of $\Gamma(N)x$ for the action of $\mathcal{G}$ equals $\mathcal{G}_x\Gamma(N)$. As a consequence we have $\lvert \text{or}_{\mathcal{G},N}(x) \rvert = [\mathcal{G}: \mathcal{G}_x\Gamma(N)]$. For a subgroup $G$ of $\Gamma$, we write $G^{\pm}$ to denote the subgroup generated by $\{-\text{Id}, G\}$. Since $-\text{Id}$ acts trivially on the points of $\mathbb{P}^{1}(\mathbb{Q})$, it follows that $\lvert \text{or}_{\mathcal{G}, N} (x)\rvert = \lvert \text{or}_{\mathcal{G}^{\pm}, N} (x)\rvert$. We use the formula above to discover $\bigl\lvert \text{or}_{\mathcal{G}^{\pm}, N} (x)\bigr\rvert = \frac{[\Gamma: \Gamma_x\Gamma(N)]}{[\Gamma: \mathcal{G}^{\pm}]}[\Gamma_x: \mathcal{G}^{\pm}_x]$
where we are using the isomorphism $\frac{\Gamma_x}{\mathcal{G}_x^{\pm}} \cong \frac{\Gamma_x\Gamma(N)} {\mathcal{G}^{\pm}_x\Gamma(N)}$. Observe that $[\Gamma: \Gamma_x\Gamma(N)]$ does not depend on the choice of the point $x$. Thus, \[\lvert \text{or}_{\mathcal{G},N} (x) \rvert = c(\mathcal{G}) \text{amp}(x, \mathcal{G})\] 
where $c(\mathcal{G})$ a constant that is independent of $x$ and $\text{amp}(x, \mathcal{G}) := [\Gamma_x: \mathcal{G}_x^{\pm}]$ is the amplitude of $\mathcal{G}$ at the cusp $\mathcal{G}x$. Now, a theorem of Larcher \cite{larcher} asserts that the set of cusp amplitudes $\{\text{amp}(x, \mathcal{G}) \mid x \in \mathcal{C}(\mathcal{G})\}$ is closed under the operations g.c.d. and l.c.m. In particular, there exists a cusp $x_0$ so that $\text{amp}(x_0, \mathcal{G})$ divides $\text{amp}(x, \mathcal{G})$ for each $x \in \mathcal{C}(\mathcal{G})$. Our treatment makes use of this fact during the construction of a basis for the space of Eisenstein series of weight $2$. We conclude the discussion with a glimpse into the details of Larcher's theorem, which also allows us to describe the main construction of this section in a completely explicit manner for a large number of cases. 

Let $\mathcal{G}$ be a congruence subgroup of $\Gamma$ containing $-\text{Id}$. We say $\mathcal{K} \subseteq \mathcal{G}$ is a \textit{general Larcher subgroup} \cite{cummins} of $\mathcal{G}$ if $- \text{Id} \in \mathcal{K}$ and the subset of parabolic elements of $\mathcal{K}$ coincides with the subset of the parabolic elements of $\mathcal{G}$. If $\mathcal{K}$ is a general Larcher subgroup of $\mathcal{G}$, then the cusp amplitudes of $\mathcal{G}$ coincide with the cusp amplitudes of $\mathcal{K}$. For positive integers $p$, $q$, $r$, $\chi$, and $\tau$ such that $p \mid qr$ and $\chi \mid \gcd(p, \frac{qr}{p})$, write
\begin{equation}
\label{3.2new}
H(p,q,r; \chi, \tau) := \Big\{\begin{pmatrix}
	1 + ap & bq \\
	cr & 1 + dp
\end{pmatrix} \in \Gamma\; \bigl\lvert \; \substack{a, b, c, d \in \mathbb{Z}, \\ c \equiv \tau a (\text{mod $\chi$})}\Big\}.
\end{equation}
Larcher showed that every congruence subgroup $\mathcal{G}$ containing $-\text{Id}$ contains a general Larcher subgroup that is conjugate to a subgroup of the form $H(p,q,r; \chi, \tau)^{\pm}$ for a suitable choice of the parameters. It is possible to explicitly compute the orbits in $\Lambda_{N}$ for the class of groups in \eqref{3.2new}; see \cite[7]{cummins et al}. We list the orbits for the classical families $\Gamma(N,t)$ and $\Gamma_{0}(N)$ in Appendix~\ref{appendix}, necessary for the discussion in the latter sections. A general congruence subgroup need not be conjugate to a subgroup in the form given in \eqref{3.2new}. Nevertheless, if the image of $\mathcal{G}$ in $\mathsf{P}\Gamma := \Gamma/\{\pm \text{Id}\}$ is a torsion free subgroup of genus zero, then Sebbar proves that $\mathcal{G}^{\pm}$ is indeed conjugate to a subgroup of the form $H(p,q,r; \chi, \tau)^{\pm}$. In addition, one expects that these subgroups should play an important role in the classification of the subgroups of higher genus.

\section{Spectral Eisenstein series}
\label{section4}
\subsection{Definition and first properties}
\label{section4.1}
Let $k$ be an integer $\geq 2$ and $\mathcal{G}$ be a congruence subgroup of $\Gamma$. As per standard convention, we denote a cusp of $\mathcal{G}$ using a fixed choice of representative in $\mathbb{P}^{1}(\mathbb{Q})$. Suppose that $x \in \mathcal{C}(\mathcal{G})$ and $\mathcal{G}_{x}$ is the stabilizer $x$ in $\mathcal{G}$. We choose a \textit{scaling matrix} $\sigma_{x} \in \Gamma$ for $x$, i.e., $x = \sigma_x\infty$. The \textit{spectral Eisenstein series} of weight $k$ attached to the cusp $x$ and the scaling matrix $\sigma_x$ is 
\begin{equation}
\label{4.1}
E_{k,x}(\tau;s) := \sum_{\gamma \in \mathcal{G}_x\backslash\mathcal{G}} \mathbf{j}(\sigma_x^{-1}\gamma, \tau)^{-k}(\text{Im } \sigma_x^{-1}\gamma\tau)^{s} \hspace{.3cm}\substack{(\text{Re}(k+2s) > 2)}
\end{equation}
where $\mathbf{j}\big(\gamma, \tau\big) = c\tau+d$ for $\gamma = \begin{pmatrix}
	a & b\\
	c & d
\end{pmatrix} \in \Gamma$. Note that $\mathbf{j}$ satisfies the cocycle identity   
\[\mathbf{j}(\gamma_1\gamma_2, \tau) = \mathbf{j}(\gamma_1, \gamma_2\tau)\mathbf{j}(\gamma_2, \tau). \hspace{.3cm}\substack{(\gamma_1, \gamma_2 \in \Gamma)}\]
Thus, if $k$ is even then the sum on the RHS of \eqref{4.1} does not depend on a choice of representatives for $\mathcal{G}_x \backslash \mathcal{G}$, and gives rise to a well-defined function that is invariant under $\lvert_{k}$ action of $\mathcal{G}$. Now suppose $k$ is odd. Here the series \eqref{4.1} may depend on a choice of representatives for $\mathcal{G}_x \backslash \mathcal{G}$ unless $-\text{Id} \notin \mathcal{G}$ and $x$ is a regular cusp of $\mathcal{G}$. If $-\text{Id} \notin \mathcal{G}$ and $x$ is a regular cusp of $\mathcal{G}$, then the RHS of \eqref{4.1} gives rise to well-defined $\mathcal{G}$-invariant function on $\mathbb{H}$. Next, we analyze the dependence of the spectral Eisenstein series on the choice of scaling matrix. Let the notation be as above and if $k$ is odd then assume that $-\text{Id} \notin \mathcal{G}$ and $x$ is a regular cusp. If $\sigma_{x, 1}$ and $\sigma_{x,2}$ are two scaling matrices for the cusp $x$ then $\sigma_{x,2}^{-1} \in \Gamma_{\infty} \sigma_{x, 1}^{-1}$. Now, an application of the cocycle identity shows that the series attached to $\sigma_{x, 1}$ and $\sigma_{x,2}$ are equal up to a sign factor of the form $(\pm 1)^k$. In particular, an Eisenstein series of even weight is independent of the choice of scaling matrix. But, if $k$ is odd, then replacing $\sigma_{x}$ by $-\sigma_{x}$ replaces the series by its negative. In this article, we always write the Eisenstein series with respect to a choice of scaling matrix. Finally, let $y$ be another representative for the cusp $\mathcal{G}x$. Write $y = \gamma_{0} x$ with $\gamma_{0} \in \mathcal{G}$. Note that there is a bijection 
$\mathcal{G}_x \backslash \mathcal{G} \xrightarrow{\cong} \mathcal{G}_y \backslash \mathcal{G}$ described by $\mathcal{G}_x\gamma \mapsto \mathcal{G}_y \gamma_{0} \gamma$.
Therefore the series attached to the cusp $x$ and scaling matrix $\sigma_{x}$ is equal to the series attached to $y$ and scaling matrix $\gamma_{0}\sigma_{x}$. Thus the Eisenstein series \eqref{4.1} is essentially independent of the choice of representative for the cusp.

We begin by comparing the construction above with the classical Eisenstein series on $\Gamma(N)$. First, let $\mathcal{G} = \Gamma(N)$, and if $k$ is odd, then additionally assume that $N \geq 3$. Let $x$ be cusp of $\Gamma(N)$ with scaling matrix $\sigma_{x}$. Then a direct calculation \cite[p.111]{ds} comparing the defining expressions demonstrates 
\begin{equation}
\label{4.2}
E_{k,x}(\tau;s) = E_{k}\big(\overline{(0,1)}\sigma_{x}^{-1}, N; s\big). \hspace{.3cm}(\substack{\text{Re}(k+2s) > 2})
\end{equation}
Observe that, in this situation $\phi_{N}(x) = [\overline{(0,1)}\sigma_{x}^{-1}]$ where $\phi_{N}$ is the map defined in \eqref{2.1new}. One can enhance the map $\Phi_{N, \mathcal{A}}$ in Section~\ref{section2} to a spectral version by setting  
\begin{equation}
\label{4.3}
\Phi_{N, \mathcal{A}}(y;s) := E_{k}\big(\tau, \Pi_{N, \mathcal{A}}^{-1}\circ \phi_{N}(y), N;s \big). \hspace{.3cm}\big(\substack{\text{Re}(k+2s) > 2,\; y \in \mathcal{C}(\Gamma(N))}\big) 
\end{equation}
It is easy to see that the transformation properties of $\Phi_{N, \mathcal{A}}(\bullet)$, namely \eqref{2.10new} and Lemma~\ref{lemma3.2new}, carry over to $\Phi_{N, \mathcal{A}}(\bullet; s)$ without any change. 
The following result utilizes this construction to express the Eisenstein series on a general congruence subgroup \eqref{4.1} in terms of the normalized series on a principal level contained in it. Recall the notion of $\mathcal{G}$-admissible choice of representatives for the $\{\pm \text{Id}\}$-action on $\Lambda_{N}$ from Section~\ref{section3}.

\begin{proposition}
\label{proposition4.1new} 
Let $\mathcal{G}$ be a congruence subgroup containing $\Gamma(N)$. Suppose that $s \in \mathbb{C}$ with $\emph{Re}(k+2s) > 2$. 
\begin{enumerate}[label=(\roman*), align=left, leftmargin=0pt]
\item Let $k$ be even. Then 
\begin{equation*}
E_{k,x}(\tau;s) = \sum_{y \in \emph{or}_{\mathcal{G},N}(x)} \Phi_{N, \mathcal{A}}(y;s), \hspace{.3cm}\forall x \in \mathcal{C}(\mathcal{G}).
\end{equation*}
\item Now suppose $k$ is odd and $-\emph{Id} \notin \mathcal{G}$. Given a  $\mathcal{G}$-admissible choice of representatives $\mathcal{A}$ there exists a choice of scaling matrix $\{\sigma_{x} \mid x \in \mathcal{C}_{\infty}(\mathcal{G})\}$ so that 
\begin{equation}
\label{4.4}
E_{k,x}(\tau;s) = \sum_{y \in \emph{or}_{\mathcal{G},N}(x)} \Phi_{N, \mathcal{A}}(y;s), \hspace{.3cm}\forall x \in \mathcal{C}_{\infty}(\mathcal{G}).
\end{equation}
Conversely, the choice of scaling matrix for each regular cusp determines a choice of $\mathcal{G}$-admissible representatives $\mathcal{A}$ so that the above identity holds. 
\end{enumerate}
\end{proposition}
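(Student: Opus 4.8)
The plan is to split the single coset sum defining $E_{k,x}(\tau;s)$ into an outer sum over the cusps $y \in \text{or}_{\mathcal{G},N}(x)$ and an inner sum that reassembles into a normalized series on $\Gamma(N)$. Since $\Gamma(N)$ is normal in $\mathcal{G}$, I would exploit the subgroup chain $\mathcal{G}_x \subseteq \mathcal{G}_x\Gamma(N) \subseteq \mathcal{G}$ to write $\mathcal{G}_x\backslash\mathcal{G} = \coprod_{\delta}\, \mathcal{G}_x\backslash(\mathcal{G}_x\Gamma(N))\delta$, where $\delta$ runs over representatives of $(\mathcal{G}_x\Gamma(N))\backslash\mathcal{G}$. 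The stabilizer of the cusp $\Gamma(N)x$ for the $\mathcal{G}$-action is exactly $\mathcal{G}_x\Gamma(N)$, so these representatives are indexed bijectively by $y \in \text{or}_{\mathcal{G},N}(x)$ via $\delta = \delta_y$ with $y = \Gamma(N)\delta_y^{-1}x$, which is the bijection $\mathcal{G}/(\mathcal{G}_x\Gamma(N)) \cong \text{or}_{\mathcal{G},N}(x)$ recorded in Section~\ref{section3}. Together with the canonical identification $\mathcal{G}_x\backslash(\mathcal{G}_x\Gamma(N)) \cong \Gamma(N)_x\backslash\Gamma(N)$, where $\Gamma(N)_x = \mathcal{G}_x\cap\Gamma(N)$, this presents $\mathcal{G}_x\backslash\mathcal{G}$ as a disjoint union over $y$ of copies of $\Gamma(N)_x\backslash\Gamma(N)$ with representatives $h\delta_y$. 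As we work in the region $\text{Re}(k+2s)>2$ of absolute convergence, regrouping the series in this way is legitimate.

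For each fixed $y$, a term-by-term application of the cocycle identity for $\mathbf{j}$ identifies the inner sum over $h \in \Gamma(N)_x\backslash\Gamma(N)$ with $E^{\Gamma(N)}_{k,x}(\tau;s)\lvert_{\delta_y}$, the weight-$k$ slash by $\delta_y$ of the series \eqref{4.1} formed for the group $\Gamma(N)$ at the cusp $x$. By \eqref{4.2} this $\Gamma(N)$-series equals $E_k(\tau, \overline{(0,1)}\sigma_x^{-1}, N; s)$, and then \eqref{2.5new} turns the inner sum into $E_k(\tau, \overline{(0,1)}\sigma_x^{-1}\delta_y, N; s)$. Using the $\Gamma$-equivariance of $\phi_N$ and the relation $\phi_N(x) = [\overline{(0,1)}\sigma_x^{-1}]$, I obtain $\phi_N(y) = [\overline{(0,1)}\sigma_x^{-1}\delta_y]$, so the parameter of this inner series represents the class $\phi_N(y)$ in $\Lambda_N^{\pm}$. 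When $k$ is even the series $E_k(\tau,\bar\mu,N;s)$ is insensitive to the sign of $\bar\mu$, so it equals $E_k(\tau, \Pi_{N,\mathcal{A}}^{-1}\circ\phi_N(y), N; s) = \Phi_{N,\mathcal{A}}(y;s)$ regardless of $\mathcal{A}$; summing over $y$ yields part (i).

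For part (ii) the same computation gives $E_{k,x}(\tau;s) = \sum_{y}\varepsilon_y\,\Phi_{N,\mathcal{A}}(y;s)$ with signs $\varepsilon_y \in \{\pm1\}$ recording whether $\Pi_{N,\mathcal{A}}^{-1}\circ\phi_N(y)$ equals $\overline{(0,1)}\sigma_x^{-1}\delta_y$ or its negative in $\Lambda_N$, since for odd $k$ the series is odd in its parameter. The whole substance of (ii) is to force $\varepsilon_y = 1$ for every $y$, and this sign bookkeeping is the step I expect to be the main obstacle. The key input is Proposition~\ref{proposition3.1new}: for a regular cusp $x$ with $-\text{Id}\notin\mathcal{G}$, the orbit $\mathcal{O}_x := \overline{(0,1)}\sigma_x^{-1}\mathcal{G} \subseteq \Lambda_N$ is regular; regularity of $x$ forces every $\gamma \in \mathcal{G}_x$ to satisfy $\sigma_x^{-1}\gamma\sigma_x \in \{T^n\}$, so $\mathcal{G}_x$ fixes $\overline{(0,1)}\sigma_x^{-1}$ exactly in $\Lambda_N$, the stabilizer of this vector in $\mathcal{G}$ is precisely $\mathcal{G}_x\Gamma(N)$, and hence $\nu_y := \overline{(0,1)}\sigma_x^{-1}\delta_y$ runs bijectively over $\mathcal{O}_x$. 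Now $\mathcal{G}$-admissibility of $\mathcal{A}$ gives $\mathcal{O}_x\cap\mathcal{A}\neq\emptyset \Rightarrow \mathcal{O}_x\subseteq\mathcal{A}$, so all $\varepsilon_y=1$ as soon as the single vector $\overline{(0,1)}\sigma_x^{-1}$ lies in $\mathcal{A}$. For the forward direction, given admissible $\mathcal{A}$ I would replace each $\sigma_x$ by $\pm\sigma_x$ if necessary—a legitimate change of scaling matrix that negates $\overline{(0,1)}\sigma_x^{-1}$—so as to land in $\mathcal{A}$; for the converse, starting from prescribed scaling matrices I would take $\mathcal{O}_x$ to be the regular orbit containing $\overline{(0,1)}\sigma_x^{-1}$ and build $\mathcal{A}$ by including each such $\mathcal{O}_x$ wholesale, consistent because Proposition~\ref{proposition3.1new} matches distinct regular cusps with distinct pairs $\{\mathcal{O},-\mathcal{O}\}$ of regular orbits, and then completing $\mathcal{A}$ arbitrarily on the remaining orbits. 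In either case the signs collapse and \eqref{4.4} follows.
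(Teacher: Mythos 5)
Your proposal is correct and follows essentially the same route as the paper: the coset decomposition $\mathcal{G}_x\backslash\mathcal{G} = \coprod_y \Gamma(N)_x\backslash\Gamma(N)\delta_y$ indexed by $\text{or}_{\mathcal{G},N}(x)$, identification of the inner sums with normalized $E$-series via \eqref{4.2} and \eqref{2.5new}, and in the odd case choosing the sign of each $\sigma_x$ so that $\overline{(0,1)}\sigma_x^{-1}$ lands in $\mathcal{A}$ (respectively building $\mathcal{A}$ from the orbits $\overline{(0,1)}\sigma_x^{-1}\delta_y$). Your explicit verification that the stabilizer of $\overline{(0,1)}\sigma_x^{-1}$ is exactly $\mathcal{G}_x\Gamma(N)$ for a regular cusp merely spells out what the paper leaves implicit in asserting that such an $\mathcal{A}$ is automatically $\mathcal{G}$-admissible.
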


Observe that the transformation properties mentioned above ensure the RHS of the identities in the statement are already $\mathcal{G}$-invariant. 

\begin{proof}
Let $x \in \mathcal{C}(\mathcal{G})$. Suppose that $\{\gamma_1, \ldots \gamma_n\}$ is a set of representatives for the coset $\mathcal{G}/\mathcal{G}_x\Gamma(N)$. Then $\text{or}_{\mathcal{G},N}(x) = \{\gamma_{j}x \mid 1 \leq j \leq n\}$ and $\mathcal{G}_{x} \backslash \mathcal{G} = \coprod_{j} \Gamma(N)_{x}\backslash \Gamma(N)\gamma_{j}^{-1}$. If $\sigma_{x}$ is a scaling matrix for $x$, then $\gamma_{j} \sigma_{x}$ is a scaling matrix for $\gamma_{j}x$. For this choice of scaling parameter, we have 
\begin{equation*}
E_{k,x, \mathcal{G}}(\tau;s) = \sum_{j = 1}^{n} E_{k, \gamma_{j}x, \Gamma(N)}(\tau;s)	
\end{equation*}
whenever the LHS of the equality is well-defined. Here, the extra subscript records the group of definition for the corresponding Eisenstein series. The first part of the assertion is a direct consequence of \eqref{4.2} and the identity above. Assume that $k$ is odd and $- \text{Id} \notin \mathcal{G}$. Let $\{\sigma_{x} \mid x \in \mathcal{C}_{\infty}(\mathcal{G})\}$ be a choice of scaling matrix for the regular cusps. For each $x \in \mathcal{C}_{\infty}(\mathcal{G})$ we fix a choice of representatives $\{\gamma_{j, x} \mid 1 \leq j \leq n(x)\} \subseteq \mathcal{G}$ for $\mathcal{G}/\mathcal{G}_x\Gamma(N)$ as in the beginning of the proof.  Then any choice of representatives $\mathcal{A}$ containing 
\[\{\overline{(0,1)}\sigma_{x}^{-1}\gamma_{j, x}^{-1} \mid x \in \mathcal{C}_{\infty}(\mathcal{G}), \; 1 \leq j \leq n(x)\}\] 
is automatically $\mathcal{G}$-admissible and satisfies \eqref{4.4}. Conversely, suppose that $\mathcal{A}$ is a given $\mathcal{G}$-admissible choice of representative. Let $\sigma_{x} \in \Gamma$ be such that $\Pi_{N, \mathcal{A}}^{-1} \circ \phi_{N}(x) = \overline{(0,1)}\sigma_{x}^{-1}$. Then $\{\sigma_x \mid x \in \mathcal{C}_{\infty}(\mathcal{G})\}$ is choice of scaling matrices so that \eqref{4.4} holds. 
\end{proof}

Proposition~\ref{proposition4.1new} provides an efficient method to study the spectral Eisenstein series in terms of the normalized Eisenstein series. For example, the properties of the $E$-series in Section~\ref{section2} implies that the spectral Eisenstein series \eqref{4.1} analytically continues to the larger half plane $\text{Re}(k+2s) > 1$. Hence we can specialize the spectral Eisenstein series at $(k,s) = (2,0)$ exactly like the $E$-series. The rest of the current article only concerns the specialization of \eqref{4.1} at $s =0$. For simplicity, one abbreviates $E_{k,x}(\tau;0)$ as $E_{k,x}(\tau)$. The Fourier expansion formulas in Section~\ref{section2} readily shows that 
\begin{equation}
\label{4.5}
E_{k,x}(\tau) \in \begin{cases}
	\text{Hol}(\mathbb{H}), & \text{if $k \geq 3$;}\\
	- \frac{6 \epsilon_{N} \lvert \text{or}_{\mathcal{G},N}(x) \rvert}{\pi N^2 \prod_{p \mid N} (1 - \frac{1}{p^2}) \text{Im}(\tau)} + \text{Hol}(\mathbb{H}), & \text{if $k=2$} 
\end{cases}
\end{equation}
whenever LHS is well-defined and $\Gamma(N) \subseteq \mathcal{G}$. We next use the constant term formula \eqref{2.8new} to verify that the constant term of $E_{k,x}(\tau)$ possesses the desired indicator of cusp property.  

\begin{lemma}
\label{lemma4.2new}
Let $\mathcal{G}$ be a congruence subgroup and $x \in \mathcal{C}(\mathcal{G})$. If $k$ is odd, then assume that $-\emph{Id} \notin \mathcal{G}$, $x$ is regular, and fix a scaling matrix for $x$. We have 
\[\pi_{\infty}\big(E_{k,x}\lvert_{\gamma}\big) = \begin{cases}
	(\pm 1)^k, & \text{if $\gamma \infty = x$ in $\mathcal{C}(\mathcal{G})$;}\\
	0, & \text{otherwise.}
\end{cases} \hspace{.5cm}\substack{(\forall \gamma \in \Gamma)}\] 
\end{lemma}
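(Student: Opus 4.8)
The plan is to reduce the computation of $\pi_{\infty}\big(E_{k,x}\lvert_{\gamma}\big)$ to the indicator property of the normalized series recorded in \eqref{2.8new}, using the decomposition furnished by Proposition~\ref{proposition4.1new}. Specializing that proposition at $s = 0$, which is legitimate since both sides continue analytically past $\text{Re}(k+2s)>2$, writes $E_{k,x} = \sum_{y \in \text{or}_{\mathcal{G},N}(x)} \Phi_{N, \mathcal{A}}(y)$; for odd $k$ I take the compatible pair consisting of the fixed scaling matrix and a $\mathcal{G}$-admissible $\mathcal{A}$ provided by part (ii), and recall that $-\text{Id}\notin\mathcal{G}\supseteq\Gamma(N)$ forces $N\geq 3$, so the hypothesis ``$k$ even or $N\geq 3$'' needed for the indicator property always holds. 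Since $f \mapsto \pi_{\infty}(f\lvert_{\gamma})$ is $\mathbb{C}$-linear, it suffices to evaluate $\pi_{\infty}\big(\Phi_{N, \mathcal{A}}(y)\lvert_{\gamma}\big)$ for each $y$ in the orbit and sum.

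For a single term, abbreviate $\bar\mu_y := \Pi_{N, \mathcal{A}}^{-1}\circ \phi_N(y) \in \Lambda_N$, so that $\Phi_{N, \mathcal{A}}(y) = E_k(\tau, \bar\mu_y, N)$. The transformation rule \eqref{2.5new} gives $\Phi_{N, \mathcal{A}}(y)\lvert_{\gamma} = E_k(\tau, \bar\mu_y\gamma, N)$, and by the indicator property of \eqref{2.8new} this has a nonzero constant term precisely when $\bar\mu_y\gamma \in \{\pm\overline{(0,1)}\}$. I would then translate this into the cusp picture: applying the $\Gamma$-equivariant projection $\Pi_N$, and using $\Pi_N(\bar\mu_y) = \phi_N(y)$ together with $\phi_N(\infty_N) = [\overline{(0,1)}]$, turns the condition into $\phi_N(y)\gamma = \phi_N(\infty_N)$, i.e. $\phi_N(y) = \phi_N(\gamma\infty_N)$ by $\Gamma$-equivariance of $\phi_N$; since $\phi_N$ is a bijection (Lemma~\ref{lemma2.2}), this forces $y = \gamma\infty_N$. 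Thus at most one term survives, the one indexed by $y = \gamma\infty_N$, and it survives only if $\gamma\infty_N$ actually lies in the orbit $\text{or}_{\mathcal{G},N}(x)$.

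To decide the latter I would invoke the defining bijection $\text{or}_{\mathcal{G},N}: \mathcal{C}(\mathcal{G}) \xrightarrow{\cong} \mathcal{C}(\mathcal{G},N)$: the cusp $\gamma\infty_N$ of $\Gamma(N)$ lies in $\text{or}_{\mathcal{G},N}(x)$ if and only if $\mathcal{G}(\gamma\infty) = \mathcal{G}x$, i.e. $\gamma\infty = x$ in $\mathcal{C}(\mathcal{G})$. When this fails the whole sum vanishes, producing the second case. When it holds, the surviving term equals $\pi_{\infty}\big(E_k(\tau, \bar\mu_y\gamma, N)\big)$ with $\bar\mu_y\gamma = \pm\overline{(0,1)}$, and reading off \eqref{2.8new} gives $\epsilon_N$ or $\epsilon_N(-1)^k$; for $N \geq 3$ this is exactly $(\pm 1)^k$, while for $N \in \{1,2\}$ (possible only when $k$ is even) the factor $\epsilon_N = \tfrac12$ combines with the two coinciding indicators $\mathbbm{1}(\bar\mu_{y,2}\gamma,\pm 1)$ to again produce $1 = (\pm 1)^k$.

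The bookkeeping is routine once the dictionary between $\Lambda_N$ and $\mathcal{C}(\Gamma(N))$ is in place; the point demanding care is the sign. I expect the main obstacle to be organizing the passage between $\Lambda_N$ and $\Lambda_N^{\pm}$ so that the $\pm$ in $\bar\mu_y\gamma = \pm\overline{(0,1)}$ is correctly matched with the $(\pm 1)^k$ of the statement, and, in the odd-weight case, confirming that the compatible choice of scaling matrix and admissible $\mathcal{A}$ from Proposition~\ref{proposition4.1new}(ii) is precisely what keeps the decomposition clean for $\pi_{\infty}$. Note that, because I apply $\lvert_{\gamma}$ directly to $E_k(\tau, \bar\mu_y, N)$ via \eqref{2.5new} rather than comparing $\Phi_{N,\mathcal{A}}(\gamma y)$ with $\Phi_{N,\mathcal{A}}(y)\lvert_{\gamma^{-1}}$, the sign cocycle $\varepsilon_{\gamma}(\cdot)$ of \eqref{2.10new} never enters, which keeps the computation transparent.
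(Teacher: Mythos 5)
Your proposal is correct and follows essentially the same route as the paper: decompose $E_{k,x}$ via Proposition~\ref{proposition4.1new} (with a compatible admissible $\mathcal{A}$ in the odd-weight case), apply the constant-term formula \eqref{2.8new} termwise, and translate the surviving condition into $\gamma\infty = x$ in $\mathcal{C}(\mathcal{G})$ through the dictionary $\phi_N$ and $\text{or}_{\mathcal{G},N}$. You merely spell out the bookkeeping (the $N\in\{1,2\}$ case with $\epsilon_N = \tfrac12$, the bijectivity of $\phi_N$, and why the cocycle $\varepsilon_\gamma$ of \eqref{2.10new} never enters) that the paper leaves implicit.
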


\begin{proof}
Let $N$ be a positive integer with $\Gamma(N) \subseteq \mathcal{G}$. We use the formulas in Proposition~\ref{proposition4.1new} together with \eqref{2.8new} to deduce  
\[\pi_{\infty}\big(E_{k,x}\lvert_{\gamma}\big) = \begin{cases}
	(\pm 1)^k, & \text{if $\infty_N \in \{\gamma^{-1}y \mid y \in \text{or}_{\mathcal{G},N}(x)\}$;}\\
	0, & \text{otherwise.}
\end{cases} \hspace{.5cm}\substack{(\forall \gamma \in \Gamma)}\] 
But, the first condition above holds if and only if $\gamma \infty = x$ in $\mathcal{C}(\mathcal{G})$.   
\end{proof}

We conclude the discussion with a convenient rephrasing of Proposition~\ref{proposition4.1new}, which is helpful for practical calculations. Let the notation be as in the statement of the proposition. Suppose that $\mathcal{O}_{\text{reg}}$, resp. $\mathcal{O}_{\text{ir}}$, is the collection of regular, resp. irregular, $\mathcal{G}$-orbits in $\Lambda_{N}$. With each $\mathcal{G}$-orbit  $\mathcal{O}$ in $\Lambda_{N}$ one can attach a cusp $x_{\mathcal{O}}$ in $\mathcal{C}(\mathcal{G})$ by setting $x_{\mathcal{O}} := \text{or}_{\mathcal{G},N}^{-1} \circ \phi_{N}^{-1}\big([\mathcal{O}]\big)$ where $[\cdot]$ refers to the image in $\Lambda_{N}^{\pm}$. Proposition~\ref{proposition3.1new} shows that $\mathcal{O} \in \mathcal{O}_{\text{reg}}$ if and only if $x_{\mathcal{O}} \in \mathcal{C}_{\infty}(\mathcal{G})$ whenever  $-\text{Id} \notin \mathcal{G}$. The natural map $\mathcal{O} \to \phi_{N}^{-1}([\mathcal{O}])$ described by $\bar{\lambda} \mapsto \phi_{N}^{-1}([\bar{\lambda}])$ is a bijection if $\mathcal{O} \in \mathcal{O}_{\text{reg}}$ and a two-to-one surjection if $\mathcal{O} \in \mathcal{O}_{\text{ir}}$ and $N \geq 3$. In the exceptional cases, i.e., for $\mathcal{O} \in \mathcal{O}_{\text{ir}}$ and $N \in \{1,2\}$ the natural map is again a bijection. Note that for a $\mathcal{G}$-orbit $\mathcal{O}$ in $\Lambda_{N}$, one can construct a $\mathcal{G}$-invariant nonholomorphic Eisenstein series by considering the orbital sum:  
\begin{equation}
\label{4.6new}
E_{k}(\mathcal{O}, \mathcal{G}; s) := \sum_{\bar{\lambda} \in \mathcal{O}} E_{k}(\bar{\lambda}, N; s). \hspace{.3cm}(\substack{\text{Re}(k+2s) > 2})
\end{equation}
If $k$ is odd and $-\text{Id} \in \mathcal{G}$ then this sum is zero. Assume that $-\text{Id} \notin \mathcal{G}$ if $k$ is odd. Let $\mathcal{A}$ be a choice of $\mathcal{G}$-admissible representatives for the action of $\{\pm \text{Id}\}$ on $\Lambda_N$ and consider a compatible choice of scaling matrices in the sense of Proposition~\ref{proposition4.1new}. Then 
\begin{equation*}
E_{k}(\mathcal{O}, \mathcal{G}; s) = \begin{cases}
	E_{k, x_{\mathcal{O}}}(\tau;s), & \text{if $\mathcal{O} \in \mathcal{O}_{\text{reg}}$ and $\mathcal{O} \subseteq \mathcal{A}$};\\
	(-1)^{k}E_{k, x_{\mathcal{O}}}(\tau;s), & \text{if $\mathcal{O} \in \mathcal{O}_{\text{reg}}$ and $-\mathcal{O} \subseteq \mathcal{A}$};\\
	2E_{k, x_{\mathcal{O}}}(\tau;s), & \text{if $\mathcal{O} \in \mathcal{O}_{\text{ir}}$, $k$ is even, and $N \geq 3$};\\
	E_{k, x_{\mathcal{O}}}(\tau;s), & \text{if $\mathcal{O} \in \mathcal{O}_{\text{ir}}$, $k$ is even, and $N= 1,2$};\\
	0, & \text{otherwise}.  	
\end{cases}	
\end{equation*}
In other words, the orbital sums \eqref{4.6new} practically capture the same information as the spectral Eisenstein series \eqref{4.1}. 

\subsection{Basis for the space of Eisenstein series}
\label{section4.2}
The spectral Eisenstein series already exhibits an appropriate number of linearly independent functions inside the space of Eisenstein series. From here one can invoke the dimension formula for the space of Eisenstein series to check that they form a basis of the space. However, we prefer to avoid using the dimension formula and utilize the fixed subspace description provided in \eqref{1.1} to finish the construction of the basis. The discussion begins with a few auxiliary notions that streamline the presentation and help us keep track of the coefficients during the procedure. 

Let $k \geq 2$ and $\mathbbm{k}$ be a subring of $\mathbb{C}$. Suppose that $\mathcal{G}$ is a congruence subgroup containing $\Gamma(N)$. We define the extended space of Eisenstein series on $\mathcal{G}$ with coefficients in $\mathbbm{k}$ as 
\begin{equation*}
\mathcal{E}^{\ast}_{k}(\mathcal{G}, \mathbbm{k}) := \mathcal{E}^{\ast}_{k}\big(\Gamma(N), \mathbbm{k}\big)^{\mathcal{G}}.
\end{equation*}
Moreover, the space of Eisenstein series on $\mathcal{G}$ with the coefficients in $\mathbbm{k}$ is    
\begin{equation}
\label{4.6}
\mathcal{E}_{k}(\mathcal{G}, \mathbbm{k}) := \mathcal{E}^{\ast}_{k}(\mathcal{G}, \mathbbm{k}) \cap \text{Hol}(\mathbb{H}) = \mathcal{E}_{k}\big(\Gamma(N),\mathbbm{k}\big)^{\mathcal{G}}.
\end{equation}
The two spaces of Eisenstein series are equal if $k \geq 3$. The results of this subsection imply that these spaces are independent of $\Gamma(N)$ and the latter space coincides with the $\mathbbm{k}$-span of the spectral basis yet to be constructed. Note that $\mathcal{E}_{k}(\mathcal{G}, \mathbb{C})$ is the same as $\mathcal{E}_{k}(\mathcal{G})$ described in \eqref{1.1} since $\mathcal{E}_{k}\big(\Gamma(N),\mathbb{C}\big)$ is the correct space of Eisenstein series on $\Gamma(N)$. The construction of a basis for the even weight spaces is a straightforward consequence of the formalism developed in Section~\ref{section2}. 

\begin{lemma}
\label{lemma4.3new}
Let $k$ be even and $E_{k,x}$ denote the spectral Eisenstein series from Section~\ref{section4.1}. Then, the functions $\{E_{k,x} \mid x \in \mathcal{C}(\mathcal{G})\}$ form a $\mathbbm{k}$-basis of the space $\mathcal{E}^{\ast}_{k}(\mathcal{G}, \mathbbm{k})$. 
\end{lemma}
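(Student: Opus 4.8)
The plan is to prove that $\{E_{k,x} \mid x \in \mathcal{C}(\mathcal{G})\}$ is a $\mathbbm{k}$-basis of $\mathcal{E}^{\ast}_{k}(\mathcal{G}, \mathbbm{k})$ in two movements: first establishing membership and linear independence directly from the constant-term computation, and then establishing spanning by a dimension-free orbit-counting argument that leverages the known basis on $\Gamma(N)$. For membership, I would invoke part (i) of Proposition~\ref{proposition4.1new}, which exhibits $E_{k,x}$ (specialized at $s=0$) as $\sum_{y \in \text{or}_{\mathcal{G},N}(x)} \Phi_{N, \mathcal{A}}(y)$, a $\mathcal{G}$-invariant $\mathbbm{k}$-linear combination of the basis elements $\Phi_{N,\mathcal{A}}(y)$ of $\mathcal{E}^{\ast}_k(\Gamma(N),\mathbbm{k})$; since the sum runs over a full $\mathcal{G}$-orbit and each $\Phi_{N,\mathcal{A}}(y)$ lies in $\mathcal{E}^{\ast}_k(\Gamma(N),\mathbbm{k})$, the orbital sum lies in the fixed submodule $\mathcal{E}^{\ast}_k(\Gamma(N),\mathbbm{k})^{\mathcal{G}} = \mathcal{E}^{\ast}_k(\mathcal{G},\mathbbm{k})$.

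For linear independence, the key tool is Lemma~\ref{lemma4.2new}: since $k$ is even, $\pi_{\infty}(E_{k,x}\lvert_{\gamma})$ equals $1$ when $\gamma\infty = x$ in $\mathcal{C}(\mathcal{G})$ and $0$ otherwise. The strategy is to associate to each cusp $x$ a scaling matrix $\sigma_x$ with $\sigma_x\infty = x$, and observe that the linear functional $f \mapsto \pi_{\infty}(f\lvert_{\sigma_{x'}})$ separates the $E_{k,x}$: applied to $E_{k,x}$ it returns $\mathbbm{1}(x, x')$. Hence if $\sum_{x} c_x E_{k,x} = 0$ then applying $\pi_{\infty}((\cdot)\lvert_{\sigma_{x'}})$ forces $c_{x'} = 0$ for every cusp $x'$, giving independence over $\mathbbm{k}$.

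For spanning, I would argue that the orbital sums realize $\mathcal{E}^{\ast}_k(\mathcal{G},\mathbbm{k})$ as exactly the $\mathcal{G}$-invariants of $\mathcal{E}^{\ast}_k(\Gamma(N),\mathbbm{k})$. The latter is a free $\mathbbm{k}$-module on the basis $\{\Phi_{N,\mathcal{A}}(y) \mid y \in \mathcal{C}(\Gamma(N))\}$, on which $\mathcal{G}$ acts (via $\lvert_{\gamma^{-1}}$) by permuting basis elements according to the $\mathcal{G}$-action on $\mathcal{C}(\Gamma(N))$ — this is precisely formula \eqref{2.10new}, which in even weight has trivial sign $\varepsilon_{\gamma}(x)^k = 1$. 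A permutation module's invariants are spanned freely by the orbit sums $\sum_{y \in \mathcal{O}} \Phi_{N,\mathcal{A}}(y)$, one per $\mathcal{G}$-orbit $\mathcal{O}$ in $\mathcal{C}(\Gamma(N))$, and these orbits are indexed by $\mathcal{C}(\mathcal{G})$ through the bijection $\text{or}_{\mathcal{G},N}$. Matching each orbit sum with the corresponding $E_{k,x}$ via Proposition~\ref{proposition4.1new}(i) then shows the $E_{k,x}$ span.

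The main obstacle is the spanning step, specifically verifying cleanly that taking $\mathcal{G}$-invariants of a permutation $\mathbbm{k}$-module yields a \emph{free} module on the orbit sums — a statement that is elementary over a field but requires the trivial-stabilizer/permutation structure to pass to a general subring $\mathbbm{k}$. Since the $\mathcal{G}$-action genuinely permutes a $\mathbbm{k}$-basis (with no scalar twists in even weight), the invariants are cut out by the equations equating coefficients along each orbit, so the orbit sums form a basis over any $\mathbbm{k}$; I would state this as the crux and verify the permutation action via \eqref{2.10new}. The remaining verifications — that specialization at $s=0$ commutes with the orbital sum, and that $\pi_{\infty}$ is $\mathbbm{k}$-linear — are routine given Section~\ref{section2}.
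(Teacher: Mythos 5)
Your proposal is correct and follows essentially the same route as the paper: the paper's proof likewise observes that for even $k$ the transformation formula \eqref{2.10new} makes $\Phi_{N,\mathcal{A}}$ genuinely $\Gamma$-equivariant, invokes the standard fact that the $\mathcal{G}$-invariants of a permutation module over any $\mathbbm{k}$ are freely spanned by the orbit sums indexed by $\mathcal{C}(\mathcal{G})$ via $\text{or}_{\mathcal{G},N}$, and identifies these orbit sums with the $E_{k,x}$ through Proposition~\ref{proposition4.1new}(i). Your separate linear-independence step via Lemma~\ref{lemma4.2new} is sound but redundant, since the permutation-module argument already produces a basis (independence included); otherwise your proof matches the paper's.
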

\begin{proof}
The transformation formula \eqref{2.10new} shows that $\Phi_{N, \mathcal{A}}$ is $\Gamma$-equivariant, i.e., $\Phi_{N, \mathcal{A}}(\gamma y) = \Phi_{N, \mathcal{A}}(y)\gamma^{-1}$ for each $y \in \mathcal{C}(\Gamma(N))$ and $\gamma \in \Gamma$. Therefore, a standard argument regarding the permutation representations demonstrates that $\big\{\sum_{\small{y \in  \text{or}_{\mathcal{G},N}(x)}} \Phi_{N, \mathcal{A}}(y) \; \big\lvert \; x \in \mathcal{C}(\mathcal{G})\big\}$ is a $\mathbbm{k}$-basis of the extended space of Eisenstein series. Thus the assertion follows from Proposition~\ref{proposition4.1new}(i).  
\end{proof}

In the light of \eqref{4.5} one discovers that the holomorphic subspace of weight $2$ is \[\mathcal{E}_{2}(\mathcal{G}, \mathbbm{k}) = \big\{\sum_{x \in \mathcal{C}(\mathcal{G})} C_{x}E_{2,x}\,\bigl\lvert\, \sum_{x} C_{x}\lvert \text{or}_{\mathcal{G},N}(x)\rvert = 0\big\}.\] 
We use Larcher's theorem to choose $x_0 \in \mathbb{P}^{1}(\mathbb{Q})$ so that $\text{amp}(x_0, \mathcal{G})$ divides $\text{amp}(x, \mathcal{G})$ for each $x \in \mathcal{C}(\mathcal{G})$. Then, by the discussion in Section~\ref{section3}, the collection 
\begin{equation}
\label{4.7new}
\big\{E_{2,x} - \frac{\lvert \text{or}_{\mathcal{G},N}(x) \rvert}{\lvert \text{or}_{\mathcal{G},N}(x_0) \rvert}E_{2,x_0}\,\bigl\lvert\, x \in \mathcal{C}(\mathcal{G}) - \{x_0\} \big\}
\end{equation}
is a $\mathbbm{k}$-basis of $\mathcal{E}_{2}(\mathcal{G}, \mathbbm{k})$. Finally, let $k$ be an odd integer and assume that $-\text{Id} \notin \mathcal{G}$. This assumption automatically forces $N \geq 3$.

\begin{proposition}
\label{proposition4.4new}
Let the notation be as above and fix a choice of scaling matrix for each regular cusp of $\mathcal{G}$. Then $\{E_{k,x} \mid x \in \mathcal{C}_{\infty}(\mathcal{G})\}$ is a $\mathbbm{k}$-basis of $\mathcal{E}_{k}(\mathcal{G},\mathbbm{k})$.
 \end{proposition}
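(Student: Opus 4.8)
The plan is to prove that $\{E_{k,x} \mid x \in \mathcal{C}_{\infty}(\mathcal{G})\}$ is a $\mathbbm{k}$-basis of $\mathcal{E}_{k}(\mathcal{G},\mathbbm{k})$ for odd $k$ with $-\mathrm{Id} \notin \mathcal{G}$, working entirely through the principal level $\Gamma(N)$ and the admissibility formalism already established. Since $k$ is odd and $N \geq 3$ is forced, we have $k \geq 3$, so the extended and holomorphic spaces coincide; hence it suffices to show that these functions form a $\mathbbm{k}$-basis of $\mathcal{E}^{\ast}_{k}(\mathcal{G},\mathbbm{k}) = \mathcal{E}^{\ast}_{k}(\Gamma(N),\mathbbm{k})^{\mathcal{G}}$. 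The starting point is to fix, using Proposition~\ref{proposition4.1new}(ii), a $\mathcal{G}$-admissible choice of representatives $\mathcal{A}$ compatible with the chosen scaling matrices, so that for every regular cusp $x$ we have the clean orbital identity $E_{k,x} = \sum_{y \in \mathrm{or}_{\mathcal{G},N}(x)} \Phi_{N,\mathcal{A}}(y)$ with no sign ambiguity (Lemma~\ref{lemma3.2new}).

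First I would establish that these functions lie in the fixed space and are linearly independent. Membership in $\mathcal{E}^{\ast}_{k}(\Gamma(N),\mathbbm{k})^{\mathcal{G}}$ is immediate because each $E_{k,x}$ is a $\mathcal{G}$-orbital sum of basis elements $\Phi_{N,\mathcal{A}}(y)$ and is $\mathcal{G}$-invariant by construction. For linear independence, I would invoke the constant-term indicator property: by Lemma~\ref{lemma4.2new}, for a regular cusp $x$ with its fixed scaling matrix, $\pi_{\infty}(E_{k,x}\lvert_{\gamma})$ equals $(\pm 1)^k$ exactly when $\gamma\infty = x$ in $\mathcal{C}(\mathcal{G})$ and vanishes otherwise. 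Choosing for each $x \in \mathcal{C}_{\infty}(\mathcal{G})$ a group element $\gamma_x \in \Gamma$ with $\gamma_x \infty = x$, the matrix of values $\pi_{\infty}(E_{k,x}\lvert_{\gamma_{x'}})$ is (up to nonzero signs) the identity indexed by regular cusps, forcing any vanishing $\mathbbm{k}$-linear combination to have all coefficients zero.

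The main work is spanning: I must show every $f \in \mathcal{E}^{\ast}_{k}(\Gamma(N),\mathbbm{k})^{\mathcal{G}}$ is a $\mathbbm{k}$-combination of the $E_{k,x}$. Writing $f = \sum_{z \in \mathcal{C}(\Gamma(N))} c_z\,\Phi_{N,\mathcal{A}}(z)$ in the principal-level basis from \eqref{2.9new}, the condition $f\lvert_{\gamma} = f$ for all $\gamma \in \mathcal{G}$ combined with the transformation law \eqref{2.10new} constrains the coefficients $c_z$ along each $\mathcal{G}$-orbit in $\mathcal{C}(\Gamma(N))$. The key point, and the main obstacle, is to show that the coefficient function must \emph{vanish identically on orbits corresponding to irregular cusps}: when $\mathcal{O}$ is an irregular $\mathcal{G}$-orbit in $\Lambda_N$, Proposition~\ref{proposition3.1new} produces $\gamma \in \mathcal{G}$ fixing the relevant cusp but introducing a genuine sign $\varepsilon_{\gamma} = -1$ in \eqref{2.10new}; since $k$ is odd, $\mathcal{G}$-invariance of $f$ then forces $c_z = -c_z$, hence $c_z = 0$ on that orbit. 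On regular orbits, the admissibility of $\mathcal{A}$ (Lemma~\ref{lemma3.2new}) guarantees $\varepsilon_{\gamma} = 1$ throughout, so $c_z$ is constant along the orbit, and collecting this constant over $\mathrm{or}_{\mathcal{G},N}(x)$ recovers exactly a multiple of $E_{k,x}$.

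Assembling these steps, $f$ equals $\sum_{x \in \mathcal{C}_{\infty}(\mathcal{G})} c_x\, E_{k,x}$ where $c_x$ is the common value of the coefficients on the regular orbit over $x$, which proves spanning and completes the proof that the collection is a basis. The delicate part throughout is bookkeeping the odd-weight sign: the whole argument hinges on the dichotomy that irregular orbits kill their coefficients while regular orbits preserve them, which is precisely what the regular/irregular distinction of Section~\ref{section3} and the admissible representatives were designed to track.
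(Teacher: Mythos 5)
Your proposal is correct and follows essentially the same route as the paper: linear independence via the constant-term indicator of Lemma~\ref{lemma4.2new}, then spanning by expanding $f$ in the basis $\{\Phi_{N,\mathcal{A}}(y)\}$, using $\mathcal{G}$-invariance with \eqref{2.10new} so that admissibility (Lemma~\ref{lemma3.2new}) makes coefficients constant along regular orbits while Proposition~\ref{proposition3.1new} plus odd $k$ forces $\alpha_y = -\alpha_y = 0$ on irregular ones. Your preliminary remark that odd $k$ forces $N \geq 3$ and $k \geq 3$, so the extended and holomorphic spaces coincide, is a harmless explicit step the paper leaves implicit.
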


\begin{proof}
The constant term formula in Lemma~\ref{lemma4.2new} ensures the given collection of functions is already linearly independent. Let $\mathcal{A}$ be a set of $\mathcal{G}$-admissible representatives compatible with our choice of scaling matrices in the sense of Proposition~\ref{proposition4.1new}. Suppose that $f \in \mathcal{E}_{k}(\mathcal{G},\mathbbm{k})$ and write $f = \sum_{y \in \mathcal{C}(\Gamma(N))} \alpha_{y} \Phi_{N, \mathcal{A}}(y)$. Then the $\mathcal{G}$-invariance of $f$ and \eqref{2.10new} together imply that 
\begin{equation*}
	\alpha_{y}\varepsilon_{\gamma}(y)^k = \alpha_{\gamma y};\hspace{.5cm} \forall y \in \mathcal{C}(\Gamma(N)), \gamma \in \mathcal{G}.     
\end{equation*}
Let $y \in \mathcal{C}\big(\Gamma(N)\big)$ be so that $\mathcal{G}y \in \mathcal{C}_{\infty}(\mathcal{G},N)$. Then $\alpha_{y} = \alpha_{\gamma y}$ for each $\gamma \in \mathcal{G}$ (Lemma~\ref{lemma3.2new}).  
Suppose $y \in \mathcal{C}\big(\Gamma(N)\big)$ with $\mathcal{G}y \in \mathcal{C}(\mathcal{G}) - \mathcal{C}_{\infty}(\mathcal{G})$. Write $\phi_{N}(y) = [\bar{\lambda}]$. Now Proposition~\ref{proposition3.1new} provides $\gamma \in \mathcal{G}$ satisfying $\bar{\lambda}\gamma^{-1} = - \bar{\lambda}$. Hence $\alpha_{y} = 0$ as desired. 
\end{proof}

The following theorem summarizes our construction of a basis. 

\begin{theorem}
\label{theorem4.5new}
Let $\mathcal{G}$ be a congruence subgroup containing $\Gamma(N)$. Then, the collection of functions 
\begin{equation*}
\mathcal{B}_{\mathcal{G}, k} =
\begin{cases}
\{E_{k,x} \mid x \in \mathcal{C}(\mathcal{G})\}, & \substack{\text{if $k$ is even and $\geq 4$;}}\\
\{E_{k,x} \mid x \in \mathcal{C}_{\infty}(\mathcal{G})\}, & \substack{\text{if $k$ is odd and $-\emph{Id} \notin \mathcal{G}$;}}\\
\{E_{k,x} - \frac{\lvert \emph{or}_{\mathcal{G},N}(x)\rvert}{\lvert \emph{or}_{\mathcal{G},N}(x_0)\rvert} E_{k,x_0}\mid x \in \mathcal{C}(\mathcal{G}) - \{x_0\}\}, & \substack{\text{if $k=2$;}}\\
\emptyset, & \substack{\text{otherwise}} 
\end{cases}
\end{equation*}
forms a $\mathbbm{k}$-basis of $\mathcal{E}_{k}(\mathcal{G},\mathbbm{k})$. Here, in the second case, we write the series for a fixed choice of scaling matrix for each regular cusp.
\end{theorem}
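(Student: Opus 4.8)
The plan is to assemble Theorem~\ref{theorem4.5new} by simply collecting the case-by-case constructions already carried out in this subsection, since each bullet in the statement of $\mathcal{B}_{\mathcal{G},k}$ corresponds to a result already proved. The only genuine work is to confirm that the listed families are bases of $\mathcal{E}_k(\mathcal{G},\mathbbm{k})$ in each regime, so I would organize the proof as a four-way split matching the four cases of the piecewise definition.

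First, for $k$ even and $\geq 4$, I would appeal directly to Lemma~\ref{lemma4.3new}, which shows $\{E_{k,x}\mid x\in\mathcal{C}(\mathcal{G})\}$ is a $\mathbbm{k}$-basis of $\mathcal{E}^{\ast}_k(\mathcal{G},\mathbbm{k})$; since $k\geq 3$ forces $\mathcal{E}^{\ast}_k(\mathcal{G},\mathbbm{k})=\mathcal{E}_k(\mathcal{G},\mathbbm{k})$ (the extended and holomorphic spaces coincide, as noted after \eqref{4.6}), this case is immediate. Second, for $k$ odd with $-\mathrm{Id}\notin\mathcal{G}$, I would invoke Proposition~\ref{proposition4.4new} verbatim, recalling that this hypothesis forces $N\geq 3$ and that the basis is $\{E_{k,x}\mid x\in\mathcal{C}_{\infty}(\mathcal{G})\}$ relative to the fixed scaling matrices. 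Third, for $k=2$, I would recall the explicit description of the holomorphic subspace obtained from \eqref{4.5}, namely that $\mathcal{E}_2(\mathcal{G},\mathbbm{k})$ consists of the $\mathbbm{k}$-combinations $\sum_x C_x E_{2,x}$ subject to the single linear constraint $\sum_x C_x\lvert\mathrm{or}_{\mathcal{G},N}(x)\rvert=0$, and then cite the displayed basis \eqref{4.7new} built using Larcher's theorem to select $x_0$ with minimal amplitude. Fourth, the remaining case ($k$ odd with $-\mathrm{Id}\in\mathcal{G}$, or $k<2$) yields $\mathcal{E}_k(\mathcal{G},\mathbbm{k})=\{0\}$ because the orbital sums \eqref{4.6new} vanish, so $\mathcal{B}_{\mathcal{G},k}=\emptyset$ is correct.

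The main point requiring care — and the only place where a genuine obstacle could lurk — is the well-definedness of $\mathcal{B}_{\mathcal{G},k}$ and of the spaces $\mathcal{E}_k(\mathcal{G},\mathbbm{k})$ independently of the auxiliary level $N$. The statement asserts $\mathcal{G}\supseteq\Gamma(N)$ but a priori the spectral series $E_{k,x}$ and the constraint coefficients $\lvert\mathrm{or}_{\mathcal{G},N}(x)\rvert$ reference $N$. Here I would emphasize that the spectral Eisenstein series $E_{k,x}$ in \eqref{4.1} is defined purely in terms of $\mathcal{G}$ and the scaling matrix, with no reference to $N$; Proposition~\ref{proposition4.1new} merely expresses it through a chosen principal level. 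Thus the basis elements themselves are intrinsic, and the $N$-dependence in the $k=2$ constraint is harmless because rescaling $N$ scales all $\lvert\mathrm{or}_{\mathcal{G},N}(x)\rvert$ by the common factor $c(\mathcal{G})$ from Section~\ref{section3}, leaving the ratios $\lvert\mathrm{or}_{\mathcal{G},N}(x)\rvert/\lvert\mathrm{or}_{\mathcal{G},N}(x_0)\rvert=\mathrm{amp}(x,\mathcal{G})/\mathrm{amp}(x_0,\mathcal{G})$ unchanged.

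I expect the proof to be short, essentially a bookkeeping assembly of Lemma~\ref{lemma4.3new}, Proposition~\ref{proposition4.4new}, and the $k=2$ discussion around \eqref{4.7new}, together with a one-line remark that the spectral series are intrinsic to $\mathcal{G}$. The only step demanding a moment of thought is verifying that the $k=2$ basis is genuinely a basis rather than just a spanning set: the $\lvert\mathcal{C}(\mathcal{G})\rvert-1$ listed functions are linearly independent because their images under the quotient by the relation form a basis of a hyperplane, which follows from the linear independence of the full family $\{E_{2,x}\}$ in the extended space (Lemma~\ref{lemma4.3new}) combined with the single defining constraint.
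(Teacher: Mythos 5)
Your proposal is correct and takes essentially the same route as the paper: the paper gives no separate proof, introducing the theorem as a summary of the subsection's constructions, i.e., exactly your assembly of Lemma~\ref{lemma4.3new} (with the identification $\mathcal{E}^{\ast}_k = \mathcal{E}_k$ for $k \geq 3$) for even weight, Proposition~\ref{proposition4.4new} for odd weight with $-\mathrm{Id} \notin \mathcal{G}$, and the $k=2$ discussion around \eqref{4.5}--\eqref{4.7new} using Larcher's theorem, with your remarks on $N$-independence and the $k=2$ linear independence matching the paper's own observations. One cosmetic point: in the \emph{otherwise} case the cleanest justification is not that the orbital sums \eqref{4.6new} vanish (which only shows particular elements are zero) but that any $f \in \mathcal{E}^{\ast}_{k}(\mathcal{G}, \mathbbm{k})$ satisfies $f = f\lvert_{k, -\mathrm{Id}} = (-1)^k f$, so $k$ odd and $-\mathrm{Id} \in \mathcal{G}$ force the whole fixed space to vanish.
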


Letting $\mathbbm{k} = \mathbb{C}$ one discovers that $\mathcal{B}_{\mathcal{G},k}$ is a $\mathbb{C}$-basis of $\mathcal{E}_k(\mathcal{G})$. We refer to this basis as the \textit{spectral basis} of the space of Eisenstein series on $\mathcal{G}$. The computations of this subsection demonstrate that $\mathcal{E}_{k}(\mathcal{G}, \mathbbm{k})$, as defined in \eqref{4.6}, equals the free $\mathbbm{k}$-submodule spanned by the spectral basis attached to $\mathcal{G}$. Counting the size of $\mathcal{B}_{\mathcal{G}, k}$ one retrieves the dimension of the space of Eisenstein series on $\mathcal{G}$ as promised in the introduction.  

\begin{corollary}
\label{corollary4.6new}
We have 
\[\emph{dim}_{\mathbb{C}} \mathcal{E}_k(\mathcal{G}) = \begin{cases}
	\lvert \mathcal{C}(\mathcal{G}) \rvert, & \text{if $k$ is even and $\geq 4$;}\\
	\lvert \mathcal{C}_{\infty}(\mathcal{G}) \rvert, & \text{if $k$ is odd and $-\emph{Id} \notin \mathcal{G}$;}\\
	\lvert \mathcal{C}(\mathcal{G}) \rvert -1, & \text{if $k = 2$;}\\
	0, & \text{otherwise.}
\end{cases} \] 
\end{corollary}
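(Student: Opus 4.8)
The corollary is an immediate dimension count extracted from the explicit spectral basis $\mathcal{B}_{\mathcal{G},k}$ constructed in Theorem~\ref{theorem4.5new}. The plan is to invoke that theorem with $\mathbbm{k} = \mathbb{C}$, which guarantees that $\mathcal{B}_{\mathcal{G},k}$ is a $\mathbb{C}$-basis of $\mathcal{E}_k(\mathcal{G})$, and then simply tally the cardinality of the basis in each of the four cases. Since $\dim_{\mathbb{C}} \mathcal{E}_k(\mathcal{G}) = \lvert \mathcal{B}_{\mathcal{G},k} \rvert$, the entire argument reduces to reading off the size of the indexing set displayed in the statement of Theorem~\ref{theorem4.5new}.

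First I would treat the case $k$ even and $\geq 4$: here $\mathcal{B}_{\mathcal{G},k} = \{E_{k,x} \mid x \in \mathcal{C}(\mathcal{G})\}$ is indexed bijectively by the set of cusps, so its cardinality is $\lvert \mathcal{C}(\mathcal{G}) \rvert$. Next, for $k$ odd with $-\text{Id} \notin \mathcal{G}$, the basis is indexed by the regular cusps $\mathcal{C}_{\infty}(\mathcal{G})$, giving $\lvert \mathcal{C}_{\infty}(\mathcal{G}) \rvert$. For $k = 2$, the basis $\{E_{2,x} - \frac{\lvert \text{or}_{\mathcal{G},N}(x)\rvert}{\lvert \text{or}_{\mathcal{G},N}(x_0)\rvert} E_{2,x_0} \mid x \in \mathcal{C}(\mathcal{G}) - \{x_0\}\}$ is indexed by the cusps other than the distinguished cusp $x_0$, so it has cardinality $\lvert \mathcal{C}(\mathcal{G}) \rvert - 1$. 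Finally, in the remaining case — namely $k$ odd and $-\text{Id} \in \mathcal{G}$, where every spectral series of odd weight vanishes — the basis is $\emptyset$ and the dimension is $0$. Matching these four counts against the four branches of the piecewise formula completes the verification.

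There is no substantive obstacle here: the result is purely bookkeeping once Theorem~\ref{theorem4.5new} is in hand, and all the analytic and representation-theoretic content (the linear independence via the constant-term formula of Lemma~\ref{lemma4.2new}, the spanning via the fixed-subspace description \eqref{4.6}, and the admissibility arguments of Section~\ref{section3}) has already been absorbed into the construction of the basis. The only point meriting a line of care is the implicit claim that each displayed set is genuinely a basis rather than merely a spanning set — but this too is supplied by Theorem~\ref{theorem4.5new}, so the corollary follows at once.
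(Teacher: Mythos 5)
Your proposal is correct and coincides with the paper's own argument: the corollary is stated there with no separate proof beyond the remark that one counts the size of the spectral basis $\mathcal{B}_{\mathcal{G},k}$ from Theorem~\ref{theorem4.5new}, which is precisely your case-by-case tally with $\mathbbm{k} = \mathbb{C}$. Nothing is missing, since the basis property (not mere spanning) is exactly what Theorem~\ref{theorem4.5new} supplies, as you note.
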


\section{Unnormalized series on general subgroups}
\label{section5}
\subsection{Eisenstein series using the $G$-series}
\label{section5.1}
Let $\mathcal{G}$ be a congruence subgroup containing $\Gamma(N)$. Suppose that $k$ is an integer $\geq 2$. We first introduce an unnormalized cousin of the function $\Phi_{N, \mathcal{A}}(\bullet, s)$ in Section~\ref{section4.1}. Let the notation be as in \eqref{4.3}. Write  
\[\Phi^{\text{un}}_{N, \mathcal{A}}(y;s) := G_{k}\big(\tau, \Pi_{N, \mathcal{A}}^{-1}\circ \phi_{N}(y), N;s \big). \hspace{.3cm}\big(\substack{\text{Re}(k+2s) > 2,\; y \in \mathcal{C}(\Gamma(N))}\big) \]
The transformation properties of $\Phi_{N, \mathcal{A}}(\bullet, s)$ merely record the behavior of the underlying parameter space under the group action. Therefore they carry over to the realm of $\Phi^{\text{un}}_{N, \mathcal{A}}(\bullet;s)$ without any change. If $k$ is odd then assume that $-\text{Id} \notin \mathcal{G}$, $\mathcal{A}$ is a $\mathcal{G}$-admissible choice of representatives, and $x$ is a regular cusp of $\mathcal{G}$. One defines the \textit{unnormalized} Eisenstein series on $\mathcal{G}$ as 
\begin{equation}
\label{5.1}
G_{k,x}(\tau;s) : = \sum_{y \in \text{or}_{\mathcal{G},N}(x)} \Phi^{\text{un}}_{N, \mathcal{A}}(y;s). \hspace{.3cm}(\substack{\text{Re}(k+2s) > 2})
\end{equation} 
The transformation formulas discussed above demonstrate that $G_{k,x}(\tau;s)$ is invariant under $\lvert_{k}$-action of $\mathcal{G}$. Note that the series \eqref{5.1} may depend on the choice of the auxiliary level $\Gamma(N)$. The analytic continuation of the $G$-series on $\Gamma(N)$ implies that the unnormalized Eisenstein series on $\mathcal{G}$ also analytically continues to the whole $s$-plane. We next generalize \eqref{2.4new} to write down explicit identities relating the two versions of Eisenstein series on a general congruence subgroup. 

The group $(\mathbb{Z}/N\mathbb{Z})^{\times}$ acts on $\Lambda_{N}$ through scalar multiplication, namely, $\bar{\delta} \cdot \bar{\lambda} = \bar{\delta} \bar{\lambda}$, which descends to a $(\mathbb{Z}/N\mathbb{Z})^{\times}$-action on $\mathcal{C}(\mathcal{G},N)$. We pullback this action using $\text{or}_{\mathcal{G},N}$ to obtain a well-defined action of $(\mathbb{Z}/N\mathbb{Z})^{\times}$ on $\mathcal{C}(\mathcal{G})$. Note that the $(\mathbb{Z}/N\mathbb{Z})^{\times}$-action on $\Lambda_{N}$ maps a $\mathcal{G}$-regular orbit to another $\mathcal{G}$-regular orbit. Therefore the induced action on $\mathcal{C}(\mathcal{G})$ leaves the collection $\mathcal{C}_{\infty}(\mathcal{G})$ stable; see Proposition~\ref{proposition3.1new}. 

\begin{proposition}
\label{proposition5.1}
Let the notation be as in \eqref{5.1} and assume that $\emph{Re}(k+2s) > 1$. Then 
\begin{equation}
\label{5.2}
\begin{gathered}
G_{k,x}(\tau;s) = \epsilon_{N}^{-1} \sum_{\bar{\delta} \in (\mathbb{Z}/N\mathbb{Z})^{\times}} \varepsilon_{\mathcal{A}}(\bar{\delta})^k \zeta_{+}^{(N)}(\delta, k+2s) E_{k,\bar{\delta}^{-1}x}(\tau;s),\\
E_{k,x}(\tau;s) = \epsilon_{N} \sum_{\bar{\delta} \in (\mathbb{Z}/N\mathbb{Z})^{\times}} \varepsilon_{\mathcal{A}}(\bar{\delta})^{k} \zeta_{+}^{(N)}(\mu; \delta, k+2s) G_{k,\bar{\delta}^{-1}x}(\tau;s)
\end{gathered}
\end{equation}
where $\varepsilon_{\mathcal{A}}(\bar{\delta}) \in \{\pm 1\}$. 
\end{proposition}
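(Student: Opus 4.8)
The plan is to derive the two identities in \eqref{5.2} by reducing to the known relations \eqref{2.4new} between the $E$-series and $G$-series on $\Gamma(N)$, and then summing over the orbit $\text{or}_{\mathcal{G},N}(x)$. The essential bookkeeping is to track how scalar multiplication by $\bar{\delta} \in (\mathbb{Z}/N\mathbb{Z})^\times$ interacts with the admissible choice of representatives $\mathcal{A}$, which produces the sign $\varepsilon_{\mathcal{A}}(\bar{\delta})$. First I would fix a $\mathcal{G}$-admissible $\mathcal{A}$ together with a compatible choice of scaling matrices in the sense of Proposition~\ref{proposition4.1new}, so that both $E_{k,x}$ and $G_{k,x}$ are expressed as orbital sums of $\Phi_{N,\mathcal{A}}(\bullet;s)$ and $\Phi^{\text{un}}_{N,\mathcal{A}}(\bullet;s)$ respectively. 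By definition $\Phi_{N,\mathcal{A}}(y;s) = E_k(\Pi_{N,\mathcal{A}}^{-1}\circ\phi_N(y),N;s)$ and similarly for the unnormalized version, so \eqref{2.4new} applies termwise.

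The key computation is to understand the effect of $\bar{\delta}$ on the representative. For $y \in \mathcal{C}(\Gamma(N))$ write $\Pi_{N,\mathcal{A}}^{-1}\circ\phi_N(y) = \bar{\lambda}$. The relation \eqref{2.4new} involves the term $E_k(\bar{\delta}^{-1}\bar{\lambda},N;s)$, but $\bar{\delta}^{-1}\bar{\lambda}$ need not itself lie in $\mathcal{A}$; rather $\bar{\delta}^{-1}\bar{\lambda} = \pm\,\Pi_{N,\mathcal{A}}^{-1}\circ\phi_N(\bar{\delta}^{-1}\cdot y)$, where $\bar{\delta}^{-1}\cdot y$ denotes the pulled-back scalar action on $\mathcal{C}(\Gamma(N))$, and the sign depends only on $\bar{\delta}$ (not on $y$ within the orbit) precisely because $\mathcal{A}$ is $\mathcal{G}$-admissible and the scalar action commutes with the $\mathcal{G}$-action, sending regular orbits to regular orbits. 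This is where $\varepsilon_{\mathcal{A}}(\bar{\delta}) \in \{\pm 1\}$ is defined, and where admissibility does the real work: it guarantees the sign is constant across the whole orbit $\text{or}_{\mathcal{G},N}(x)$, so that after applying $\lvert_k$-covariance of $E_k$ the factor pulls out of the orbital sum as $\varepsilon_{\mathcal{A}}(\bar{\delta})^k$. Summing \eqref{2.4new} over $y \in \text{or}_{\mathcal{G},N}(x)$ and recognizing $\sum_{y} E_k(\Pi_{N,\mathcal{A}}^{-1}\circ\phi_N(\bar{\delta}^{-1}\cdot y),N;s) = E_{k,\bar{\delta}^{-1}x}(\tau;s)$ then yields the first identity; the second follows identically from the second line of \eqref{2.4new}.

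The main obstacle I anticipate is verifying rigorously that $\varepsilon_{\mathcal{A}}(\bar{\delta})$ is well-defined independent of the orbit and of $y$ within the orbit, and that the same sign governs every term in the orbital sum. In the even-weight case the sign is immaterial since it enters as $\varepsilon_{\mathcal{A}}(\bar{\delta})^k = 1$, so the identity is immediate from summing \eqref{2.4new}. The genuine content is the odd-weight case with $-\text{Id}\notin\mathcal{G}$: here I must confirm that scalar multiplication by $\bar{\delta}$ preserves regularity of orbits (already noted in the paragraph preceding the proposition) and that the admissible-representative framework, via the sign-tracking of \eqref{2.10new} and Lemma~\ref{lemma3.2new}, forces a uniform sign. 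I would also need to check convergence is not an issue in the enlarged region $\text{Re}(k+2s)>1$ — but this is handled by the analytic continuation already established for both series, so the identities, valid for $\text{Re}(k+2s)>2$ by absolute convergence and rearrangement, extend by analytic continuation to $\text{Re}(k+2s)>1$.
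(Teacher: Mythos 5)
Your proposal is correct and takes essentially the same route as the paper: the paper likewise proves the identities on $\text{Re}(k+2s)>2$ by applying \eqref{2.4new} termwise to the orbit and rearranging, then extends by analytic continuation, the only cosmetic difference being that it first records the sign-free relation \eqref{5.3} for the intrinsic orbital sums $G_k(\mathcal{O},\mathcal{G};s)$, $E_k(\mathcal{O},\mathcal{G};s)$ and only then defines $\varepsilon_{\mathcal{A}}(\bar{\delta})$ as the sign comparing $\bar{\delta}^{-1}\mathcal{O}_x$ with $\mathcal{O}_{\bar{\delta}^{-1}x}$ — exactly the sign you extract via $\Pi_{N,\mathcal{A}}^{-1}\circ\phi_N$, and uniform over the orbit for the reason you give (scalar action commutes with the $\mathcal{G}$-action and regular orbits are disjoint from their negatives, so admissibility fixes one sign per orbit pair). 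Your observations that the sign is immaterial for even $k$ and that the genuine content lies in the odd-weight case with $-\text{Id}\notin\mathcal{G}$ match the paper's treatment as well.
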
  

\begin{proof}
We verify the identity on the smaller domain $\text{Re}(k+2s) > 2$ and deduce the general statement using the principle of analytic continuation. Let $\text{Re}(k+2s) > 2$ and $\mathcal{O}$ be a $\mathcal{G}$-orbit in $\Lambda_N$. Define an unnormalized analogue of the nonholomorphic orbital sums by $G_{k}(\mathcal{O}, \mathcal{G}; s) := \sum_{\bar{\lambda} \in \mathcal{O}} G_{k}(\bar{\lambda}, N; s)$ exactly as in \eqref{4.6new}. One computes using \eqref{2.4new} to deduce that  
\begin{equation}
\label{5.3}
\begin{gathered}
G_{k}(\mathcal{O}, \mathcal{G}; s) =\epsilon_{N}^{-1} \sum_{\bar{\delta} \in (\mathbb{Z}/N\mathbb{Z})^{\times}}  \zeta_{+}^{(N)}(\delta, k+2s) E_{k}(\bar{\delta}^{-1}\mathcal{O}, \mathcal{G};s),\\
E_{k}(\mathcal{O}, \mathcal{G}; s) =\epsilon_{N} \sum_{\bar{\delta} \in (\mathbb{Z}/N\mathbb{Z})^{\times}}  \zeta_{+}^{(N)}(\mu;\delta, k+2s) G_{k}(\bar{\delta}^{-1}\mathcal{O}, \mathcal{G};s).
\end{gathered}
\end{equation}
With a cusp $x \in \mathcal{C}(\mathcal{G})$ one associates a $\mathcal{G}$-orbit in $\Lambda_{N}$ by setting $\mathcal{O}_{x} = \bar{\lambda}_x\mathcal{G}$ where $\bar{\lambda}_x = \Pi_{N, \mathcal{A}}^{-1} \circ \phi_{N}(x)$. Put $\varepsilon_{\mathcal{A}}(\bar{\delta}) = 1$ if $\bar{\delta}^{-1}\mathcal{O}_x = \mathcal{O}_{\bar{\delta}^{-1}x}$ and $-1$ otherwise. Now, the identity \eqref{5.2} is a consequence of \eqref{5.3}. 
\end{proof}

As before we abbreviate $\Phi^{\text{un}}_{N,\mathcal{A}}(y;0)$, resp. $G_{k,x}(\tau;0)$, as $\Phi^{\text{un}}_{N,\mathcal{A}}(y)$, resp. $G_{k,x}(\tau)$. Observe that 
\begin{equation*}
G_{k,x}(\tau) \in \begin{cases}
		\text{Hol}(\mathbb{H}), & \text{if $k \geq 3$;}\\
		- \frac{\pi \lvert \text{or}_{\mathcal{G},N}(x) \rvert}{N^2 \text{Im}(\tau)} + \text{Hol}(\mathbb{H}), & \text{if $k=2$.} 
	\end{cases}
\end{equation*}
For a subring $\mathbbm{k}$ of $\mathbb{C}$, set 
\begin{equation*}
\begin{gathered}
\mathcal{E}^{*}_{k,\text{un}}\big(\Gamma(N), \mathbbm{k}\big) := \text{$\mathbbm{k}$-span of $\{G_k(\bar{\lambda},N) \mid \bar{\lambda} \in \Lambda_N\}$},\\
\mathcal{E}_{k,\text{un}}\big(\Gamma(N), \mathbbm{k}\big) := \mathcal{E}^{*}_{k,\text{un}}\big(\Gamma(N), \mathbbm{k}\big) \cap \text{Hol}(\mathbb{H}). 
\end{gathered} \hspace{.5cm}\substack{(k \geq 2)}
\end{equation*}
These subspaces are stable under the $\lvert_{k}$ action of $\Gamma$. One employs \eqref{2.4new} to discover that 
\begin{equation*}
\text{$\mathcal{E}^{*}_{k,\text{un}}\big(\Gamma(N), \mathbb{C}\big) = \mathcal{E}^{\ast}_{k}\big(\Gamma(N), \mathbb{C}\big)$ and $\mathcal{E}_{k,\text{un}}\big(\Gamma(N), \mathbb{C}\big) =\mathcal{E}_{k}\big(\Gamma(N), \mathbb{C}\big)$},
\end{equation*} i.e., both the constructions yield the same spaces over the field of complex numbers. With the notation of \eqref{2.9new}, the collection 
\begin{equation}
\label{5.4}
\{\Phi_{N, \mathcal{A}}^{\text{un}}(x) \mid x \in \mathcal{C}\big(\Gamma(N)\big)\} 
\end{equation}
is a spanning set of $\mathcal{E}^{\ast}_{k,\text{un}}\big(\Gamma(N), \mathbbm{k}\big)$. Moreover, this collection must be linearly independent over $\mathbb{C}$ since the dimension of $\mathcal{E}_{k,\text{un}}^{\ast}\big(\Gamma(N), \mathbb{C}\big)$, by the identity above, equals $\lvert \mathcal{C}\big(\Gamma(N)\big) \rvert$. In particular, $\mathcal{E}^{\ast}_{k,\text{un}}\big(\Gamma(N), \mathbbm{k}\big)$ is a free $\mathbbm{k}$-module generated by \eqref{5.4}.  One uses the explicit Fourier expansion \eqref{2.6new} to see that an analogue of \eqref{2.11new} holds in this setting and \[\{\Phi_{N, \mathcal{A}}^{\text{un}}(x) - \Phi_{N, \mathcal{A}}^{\text{un}}(\infty_N) \mid x \neq \infty_N\}\] is a basis of the holomorphic subspace $\mathcal{E}_{2,\text{un}}\big(\Gamma(N), \mathbbm{k}\big)$. Recall that the set of representatives $\mathcal{A}$ determines the sign factor $\varepsilon_{\gamma}(x)$ appearing in \eqref{2.10new}. As a consequence the same sign factor works for both $\Phi_{N, \mathcal{A}}$ and $\Phi_{N, \mathcal{A}}^{\text{un}}$. In particular, the assignment $\Phi_{N, \mathcal{A}}(x) \to \Phi_{N, \mathcal{A}}^{\text{un}}(x)$ is compatible with the action of $\Gamma$ and extends to a $\mathbbm{k}$-linear, $\Gamma$-equivariant isomorphism:  
\[\Psi_{N, \mathcal{A}}: \mathcal{E}^{\ast}_{k}\big(\Gamma(N), \mathbbm{k}\big) \xrightarrow{\cong} \mathcal{E}_{k,\text{un}}^{\ast}\big(\Gamma(N), \mathbbm{k}\big)\] that maps $\mathcal{E}_{k}\big(\Gamma(N), \mathbbm{k}\big)$ onto $\mathcal{E}_{k,\text{un}}\big(\Gamma(N), \mathbbm{k}\big)$.

Let $k$ be an integer $\geq 2$ and $\mathcal{G}$ be a congruence subgroup containing $\Gamma(N)$. We define the unnormalized spaces of Eisenstein series on $\mathcal{G}$ as 
\begin{equation*}
\mathcal{E}_{k,\text{un}}^{\ast}\big(\mathcal{G}, \mathbbm{k}\big) = \mathcal{E}_{k,\text{un}}^{\ast}\big(\Gamma(N), \mathbbm{k}\big)^{\mathcal{G}}, \hspace{.3cm} \mathcal{E}_{k,\text{un}}\big(\mathcal{G}, \mathbbm{k}\big) = \mathcal{E}^{\ast}_{k,\text{un}}\big(\mathcal{G}, \mathbbm{k}\big) \cap \text{Hol}(\mathbb{H}).  	
\end{equation*}
In this case, the spaces of Eisenstein series on $\mathcal{G}$ may depend on the choice of the underlying principal level. Note that if $\mathbbm{k} = \mathbb{C}$, then the unnormalized spaces equal their spectral version thanks to the corresponding equality for $\Gamma(N)$. One restricts $\Psi_{N, \mathcal{A}}$ to the $\mathcal{G}$-fixed subspaces of its domain and codomain to obtain an isomorphism \[\Psi_{\mathcal{G}, \mathcal{A}}: \mathcal{E}^{\ast}_{k}(\mathcal{G}, \mathbbm{k}) \xrightarrow{\cong} \mathcal{E}_{k,\text{un}}^{\ast}(\mathcal{G}, \mathbbm{k})\] that maps $\mathcal{E}_{k}(\mathcal{G}, \mathbbm{k})$ isomorphically onto $\mathcal{E}_{k,\text{un}}(\mathcal{G}, \mathbbm{k})$. If $k$ is odd, then assume that $-\text{Id} \notin \mathcal{G}$. We push forward the basis $\{E_{k,x} \mid \text{$x$ regular if $k$ odd}\}$ along $\Psi_{\mathcal{G}, \mathcal{A}}$ to discover that  
$\{G_{k,x} \mid \text{$x$ regular if $k$ odd}\}$ is a basis of $\mathcal{E}_{k,\text{un}}^{\ast}(\mathcal{G}, \mathbbm{k})$. Moreover, one can write down a basis for the space of Eisenstein series $\mathcal{E}_{k, \text{un}}(\mathcal{G}, \mathbbm{k})$ \`a la Theorem~\ref{theorem4.5new}. Letting $\mathbbm{k} = \mathbb{C}$, we thus obtain a basis for $\mathcal{E}_{k}(\mathcal{G})$ arising from the unnormalized Eisenstein series. 

\subsection{Algebraicity of Fourier coefficients}
\label{section5.2}
We begin by comparing the two rational structures for the space of Eisenstein series stemming from the spectral and unnormalized bases. The following elementary identity plays a crucial role in the later arguments. 

\begin{lemma}
\label{lemma5.3}\emph{\cite[p.60]{shimura}} 
Let $k \geq 2$ and $m \in \mathbb{Z}$. Then 
\[\sum_{\substack{n \in \mathbb{Z} - \{0\},\\ n \equiv m (N)}} \frac{1}{n^k} = - \frac{(2\pi i)^k}{k!N}\sum_{j = 0}^{N-1}\mathbf{e}(-\frac{jm}{N})B_{k}(\frac{j}{N})\]
where $B_{\bullet}(-)$ is the Bernoulli polynomial defined by $\frac{X\exp(tX)}{\exp(X) - 1} = \sum_{j \geq 0} \frac{B_j(t)}{j!}X^j$.  
\end{lemma}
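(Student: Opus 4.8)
The plan is to isolate the congruence condition $n \equiv m \pmod{N}$ by finite Fourier analysis over $\mathbb{Z}/N\mathbb{Z}$, and then to recognize the resulting character sums as values of the periodized Bernoulli polynomial. The single analytic input I will use is the classical Fourier expansion
\[B_k(\{x\}) = -\frac{k!}{(2\pi i)^k}\sum_{n \in \mathbb{Z} - \{0\}}\frac{\mathbf{e}(nx)}{n^k}, \hspace{.4cm}(k \geq 2)\]
valid for all real $x$, where $\{x\}$ denotes the fractional part; note the series converges absolutely precisely because $k \geq 2$.

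First I would rewrite the indicator of the residue class through additive characters, using the orthogonality relation $\mathbbm{1}_{\mathbb{Z}/N\mathbb{Z}}(\bar{n}, \bar{m}) = \frac{1}{N}\sum_{j=0}^{N-1}\mathbf{e}(\frac{j(n-m)}{N})$. Inserting this into the left-hand side of the asserted identity and pulling the finite $j$-sum outside (legitimate since the defining series converges absolutely for $k \geq 2$) turns the left-hand side into $\frac{1}{N}\sum_{j=0}^{N-1}\mathbf{e}(-\frac{jm}{N})\sum_{n \neq 0}\frac{\mathbf{e}(jn/N)}{n^k}$. For each fixed $j$ the inner sum is exactly the series appearing in the Fourier expansion above, evaluated at $x = j/N$. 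Since $0 \leq j/N < 1$ for $0 \leq j \leq N-1$, one has $\{j/N\} = j/N$, so the inner sum equals $-\frac{(2\pi i)^k}{k!}B_k(\frac{j}{N})$. Substituting this back and collecting constants produces the right-hand side verbatim.

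The one point that will need care — and the only real obstacle — is the evaluation of the Fourier series at the endpoint $x = j/N = 0$, i.e. the term $j = 0$. For $k \geq 2$ the identity $B_k(1) = B_k(0)$ makes the periodic extension $x \mapsto B_k(\{x\})$ continuous, so its Fourier series converges pointwise to $B_k(\{x\})$ everywhere, including at the integers; the value at $j=0$ is therefore unambiguously $B_k(0)$, matching the right-hand side. This continuity is exactly where the hypothesis $k \geq 2$ enters, and it is also what guarantees the absolute convergence needed to justify the interchange of summation above.

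If one prefers a self-contained argument rather than quoting the Fourier expansion, I would establish it by computing the Fourier coefficients $c_n^{(k)} = \int_0^1 B_k(x)\,\mathbf{e}(-nx)\,dx$ directly: the derivative relation $B_k'(x) = k B_{k-1}(x)$ together with $B_k(1) = B_k(0)$ for $k \geq 2$ gives, via integration by parts, the recursion $c_n^{(k)} = \frac{k}{2\pi i n}\,c_n^{(k-1)}$ for $n \neq 0$, whose base case $c_n^{(1)} = -\frac{1}{2\pi i n}$ unwinds to $c_n^{(k)} = -\frac{k!}{(2\pi i)^k n^k}$; the constant term vanishes because $\int_0^1 B_k = 0$ for $k \geq 1$. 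This reproduces the displayed expansion and keeps the entire proof elementary.
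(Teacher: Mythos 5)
Your proof is correct and takes essentially the same route as the paper: the paper's proof consists of the single remark that the lemma ``follows from the Fourier expansion formula for the Bernoulli polynomials'' (citing Shimura, Theorem~4.6), and your argument simply spells out the implicit step, namely detecting the congruence $n \equiv m \,(\mathrm{mod}\ N)$ by orthogonality of additive characters and evaluating the inner sum via that same expansion at $t = j/N$. Your attention to the $j = 0$ endpoint (continuity of $B_k(\{x\})$ from $B_k(1) = B_k(0)$ for $k \geq 2$) and the optional integration-by-parts derivation of the expansion are sound additions but do not change the method.
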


\begin{proof}
Follows from the Fourier expansion formula for the Bernoulli polynomials:
\[B_{k}(t) = -\frac{k!}{(2\pi i)^k} \sum_{n \in \mathbb{Z} - \{0\}} \frac{\mathbf{e}(nt)}{n^k}; \hspace{.5cm}(\substack{k \geq 2,\; 0 \leq t \leq 1})\]
see Theorem~4.6 in \cite{shimura}. 
\end{proof}	

\begin{proposition}
\label{proposition5.4}
Let $\mathcal{G}$ be a congruence subgroup containing $\Gamma(N)$. Assume that $\mathbbm{k}$ is a subfield of $\mathbb{C}$ containing $\mathbb{Q}(\zeta_N)$. Then \[\text{$\mathcal{E}^{*}_{k,\emph{un}}(\mathcal{G}, \mathbbm{k}) = (2\pi i)^k\mathcal{E}^{\ast}_{k}(\mathcal{G}, \mathbbm{k})$ and $\mathcal{E}_{k,\emph{un}}(\mathcal{G}, \mathbbm{k}) = (2\pi i)^k\mathcal{E}_{k}(\mathcal{G}, \mathbbm{k})$.}\] 
\end{proposition}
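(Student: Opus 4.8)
The plan is to reduce the whole statement to a single identity of $\mathbbm{k}$-structures at the principal level $\Gamma(N)$ and then propagate it to $\mathcal{G}$ formally. Concretely, I would first prove
\begin{equation*}
\mathcal{E}^{*}_{k,\text{un}}\big(\Gamma(N),\mathbbm{k}\big) = (2\pi i)^k\,\mathcal{E}^{\ast}_{k}\big(\Gamma(N),\mathbbm{k}\big).
\tag{$\star$}
\end{equation*}
Granting $(\star)$, the proposition is immediate: multiplication by the scalar $(2\pi i)^k$ is $\mathbb{C}$-linear, hence commutes with the $\lvert_k$-action of $\mathcal{G}$ and with intersection against $\text{Hol}(\mathbb{H})$. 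Taking $\mathcal{G}$-invariants of $(\star)$ and recalling $\mathcal{E}^{*}_{k}(\mathcal{G},\mathbbm{k})=\mathcal{E}^{*}_{k}(\Gamma(N),\mathbbm{k})^{\mathcal{G}}$ and $\mathcal{E}^{*}_{k,\text{un}}(\mathcal{G},\mathbbm{k})=\mathcal{E}^{*}_{k,\text{un}}(\Gamma(N),\mathbbm{k})^{\mathcal{G}}$ yields the first equality of the proposition; intersecting it with $\text{Hol}(\mathbb{H})$ and using $(2\pi i)^k\text{Hol}(\mathbb{H})=\text{Hol}(\mathbb{H})$ gives the second.

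To prove $(\star)$ I would establish just the inclusion $\mathcal{E}^{*}_{k,\text{un}}(\Gamma(N),\mathbbm{k})\subseteq(2\pi i)^k\mathcal{E}^{\ast}_{k}(\Gamma(N),\mathbbm{k})$ and then close the gap by a dimension count. Both sides are finite-dimensional $\mathbbm{k}$-vector spaces of dimension $\lvert\mathcal{C}(\Gamma(N))\rvert$: the left-hand space is free on the set \eqref{5.4}, while the right-hand space is the image of $\mathcal{E}^{\ast}_{k}(\Gamma(N),\mathbbm{k})$, which is free on $\{\Phi_{N,\mathcal{A}}(x)\mid x\in\mathcal{C}(\Gamma(N))\}$, under the $\mathbbm{k}$-linear isomorphism given by scaling. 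Since an inclusion of $\mathbbm{k}$-vector spaces of equal finite dimension is an equality, it suffices to show that every generator $G_{k}(\bar\lambda,N)$ lies in $(2\pi i)^k\mathcal{E}^{\ast}_{k}(\Gamma(N),\mathbb{Q}(\zeta_N))$, which sits inside $(2\pi i)^k\mathcal{E}^{\ast}_{k}(\Gamma(N),\mathbbm{k})$ because $\mathbb{Q}(\zeta_N)\subseteq\mathbbm{k}$.

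The heart of the argument is to rewrite \eqref{2.4new} at $s=0$ so that its coefficients become visibly $(2\pi i)^k$-rational. Specializing the first identity of \eqref{2.4new} at $s=0$ (legitimate since it persists on $\text{Re}(k+2s)>1$ and $k\geq 2$) gives $G_{k}(\bar\lambda,N)=\epsilon_N^{-1}\sum_{\bar\delta}\zeta_{+}^{(N)}(\delta,k)E_{k}(\bar\delta^{-1}\bar\lambda,N)$. The main obstacle is that the individual one-sided values $\zeta_{+}^{(N)}(\delta,k)$ are \emph{not} rational multiples of $(2\pi i)^k$; this is most glaring for odd $k$, where $\zeta(k)/(2\pi i)^k$ is not known to be rational. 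I would circumvent this by pairing the summands indexed by $\bar\delta$ and $-\bar\delta$: using $E_{k}(-\bar\mu,N)=(-1)^k E_{k}(\bar\mu,N)$, which is the $\gamma=-\text{Id}$ instance of \eqref{2.5new}, the two terms combine into $\big(\zeta_{+}^{(N)}(\delta,k)+(-1)^k\zeta_{+}^{(N)}(-\delta,k)\big)E_{k}(\bar\delta^{-1}\bar\lambda,N)$, and the bracket is precisely the two-sided sum $\sum_{n\neq 0,\,n\equiv\delta\,(N)}n^{-k}$.

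Lemma~\ref{lemma5.3} now evaluates this bracket as $-\tfrac{(2\pi i)^k}{k!N}\sum_{j=0}^{N-1}\mathbf{e}(-j\delta/N)B_{k}(j/N)$, which lies in $(2\pi i)^k\mathbb{Q}(\zeta_N)$ since the Bernoulli values $B_{k}(j/N)$ are rational and $\mathbf{e}(-j\delta/N)=\zeta_N^{-j\delta}$. Thus $G_{k}(\bar\lambda,N)$ is $(2\pi i)^k$ times a $\mathbb{Q}(\zeta_N)$-linear combination of $E$-series, as required; note that this covers the $k=2$ case including the nonholomorphic contributions automatically, because \eqref{2.4new} is an identity of the full real-analytic functions. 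The only fixed points of $\bar\delta\mapsto-\bar\delta$ arise when $N\in\{1,2\}$, where the sum degenerates to a single term; there I would argue directly, invoking Euler's evaluation of $\zeta(k)$ for even $k$ (so the lone coefficient is again $(2\pi i)^k$-rational), the odd-weight case being vacuous since $\mathcal{E}^{\ast}_{k}(\Gamma(N),\mathbbm{k})=\{0\}$ for $k$ odd and $N\in\{1,2\}$. With the inclusion and the dimension count in hand, $(\star)$ follows, and hence the proposition.
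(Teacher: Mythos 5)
Your proposal is correct and follows essentially the same route as the paper's proof: reduce to the level-$\Gamma(N)$ identity, prove the inclusion $\mathcal{E}^{*}_{k,\text{un}}\big(\Gamma(N),\mathbbm{k}\big)\subseteq(2\pi i)^k\mathcal{E}^{\ast}_{k}\big(\Gamma(N),\mathbbm{k}\big)$ by grouping the $\bar{\delta}$ and $-\bar{\delta}$ terms of \eqref{2.4new} at $s=0$ so that Lemma~\ref{lemma5.3} applies to the resulting two-sided sums, and close the gap by equality of $\mathbbm{k}$-dimensions. Your treatment of the degenerate cases ($N\in\{1,2\}$ with even $k$ via Euler's evaluation of $\zeta(k)$, odd $k$ vacuous) also matches the paper's.
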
 

\begin{proof}
It is sufficient to verify that 
\begin{equation*}
\mathcal{E}^{*}_{k,\text{un}}\big(\Gamma(N), \mathbbm{k}\big) = (2\pi i)^k\mathcal{E}^{\ast}_{k}\big(\Gamma(N), \mathbbm{k}\big).	
\end{equation*}

If $k$ is odd and $N \in \{1,2\}$, then both sides of the identity are zero. Therefore, one can assume that $N \geq 3$ whenever $k$ is odd. Suppose that $N \in \{1,2\}$ and $k$ is even. In this situation, we employ \eqref{2.4new} to discover that \[G_{k}\big(\bar{\lambda},N\big) = 2\zeta_{+}^{(N)}(1, k)E_{k}(\bar{\lambda},N)\] for each $\bar{\lambda} \in \Lambda_N$. Since $\frac {\zeta(k)} {(2\pi i)^k} \in \mathbb{Q}^{\times}$, the required equality holds under the current hypothesis. Finally, let $N \geq 3$ and $\bar{\lambda} \in \Lambda_{N}$. Here one can rewrite \eqref{2.4new} as  
\begin{equation*}
G_{k}(\bar{\lambda},N) = \sum_{\substack{1 \leq \delta < \frac{N}{2},\\ \gcd(\delta, N) = 1}} \big(\zeta_{+}^{(N)}(\delta,k) + (-1)^k\zeta_{+}^{(N)}(N-\delta,k)\big)E_{k}(\bar{\delta}^{-1}
\bar{\lambda},N).	
\end{equation*}
Lemma~\ref{lemma5.3} implies that each coefficient in the RHS lies in $(2\pi i)^k\mathbbm{k}$. Therefore $G_{k}(\bar{\lambda}, N) \in (2\pi i)^{k} \mathcal{E}^{\ast}_{k}\big(\Gamma(N), \mathbbm{k}\big)$. As a consequence $\mathcal{E}^{*}_{k,\text{un}}\big(\Gamma(N), \mathbbm{k}\big) \subseteq (2\pi i)^{k} \mathcal{E}^{\ast}_{k}\big(\Gamma(N), \mathbbm{k}\big)$. But $\mathcal{E}^{*}_{k}\big(\Gamma(N), \mathbbm{k}\big)$ and $\mathcal{E}^{\ast}_{k,\text{un}}\big(\Gamma(N), \mathbbm{k}\big)$ have the same dimension over $\mathbbm{k}$. Hence, the inclusion above must be equality.     
\end{proof}

\begin{remark}
\label{remark5.4new}
Let $\delta$ and $N$ be two positive integers $\delta \mid N$. A direct calculation using the defining series \eqref{2.2new} shows that $\mathcal{E}_{k}^{\ast}\big(\Gamma(\delta),\mathbbm{k}\big) \subseteq \mathcal{E}^{*}_{k}\big(\Gamma(N),\mathbbm{k}\big)$. But this inclusion is not straight-forward to verify for the spaces spanned by unnormalized series. If $\mathbbm{k}$ is a subfield of $\mathbb{C}$ containing $\mathbb{Q}(\zeta_N)$, then Proposition~\ref{proposition5.4} implies $\mathcal{E}_{k, \text{un}}^{\ast}\big(\Gamma(\delta),\mathbbm{k}\big) \subseteq \mathcal{E}^{*}_{k, \text{un}}\big(\Gamma(N),\mathbbm{k}\big)$ also.
\end{remark}

Equipped with the necessary prerequisites, we proceed to prove the algebraicity of the Fourier coefficients theorem from the introduction.

\begin{theorem}
\label{theorem5.5new}
Let $\mathbbm{k}$ be a subfield of $\mathbb{C}$ containing $\mathbb{Q}(\zeta_N)$. Then, each element of $\mathcal{E}_{k}(\mathcal{G},\mathbbm{k})$ admits a Fourier expansion of the form \[\text{$\sum_{j \geq 0} A_j\mathbf{e}(\frac{j\tau}{N})$ with $A_j \in \mathbbm{k}$.}\] In particular, each element of the spectral basis $\mathcal{B}_{\mathcal{G},k}$ has the Fourier coefficients in $\mathbb{Q}(\zeta_N)$.    
\end{theorem}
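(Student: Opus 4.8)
The plan is to transport the rationality visible in the explicit Fourier expansion \eqref{2.6new} of the unnormalized series across the comparison isomorphism of Proposition~\ref{proposition5.4}. First I would show that, after dividing out the transcendental factor $(2\pi i)^k$, every Fourier coefficient of each generator $G_k(\bar\lambda, N)$ lies in $\mathbb{Q}(\zeta_N)$. For the coefficients $A_{k,j}(\bar\lambda)$ with $j \geq 1$ this can be read off directly from \eqref{2.6new}: the prefactor $\frac{(-2\pi i)^k}{(k-1)!\,N^k}$ is $(2\pi i)^k$ times a rational number, and the remaining sum $\sum \text{sgn}(r)\,r^{k-1}\mathbf{e}(\frac{r\lambda_2}{N})$ is a $\mathbb{Z}$-linear combination of powers of $\zeta_N$, hence lies in $\mathbb{Q}(\zeta_N)$.

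The constant term $A_{k,0}(\bar\lambda)$ is the one coefficient whose algebraicity is not manifest, and here I would invoke Lemma~\ref{lemma5.3}: it rewrites the Dirichlet-type sum $\sum_{n \equiv \lambda_2(N),\, n \neq 0} n^{-k}$ as $-\frac{(2\pi i)^k}{k!\,N}\sum_{j=0}^{N-1}\mathbf{e}(-\frac{j\lambda_2}{N})B_k(\frac{j}{N})$. Since the Bernoulli polynomial $B_k$ has rational coefficients, each value $B_k(\frac{j}{N})$ is rational, while each $\mathbf{e}(-\frac{j\lambda_2}{N})$ is a power of $\zeta_N$; together with the rational indicator factor this exhibits $A_{k,0}(\bar\lambda)$ as $(2\pi i)^k$ times an element of $\mathbb{Q}(\zeta_N)$.

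With these two observations any $\mathbbm{k}$-linear combination of the $G_k(\bar\lambda, N)$, for a field $\mathbbm{k} \supseteq \mathbb{Q}(\zeta_N)$, has all of its holomorphic Fourier coefficients in $(2\pi i)^k\mathbbm{k}$; in particular every element of $\mathcal{E}_{k,\text{un}}(\mathcal{G},\mathbbm{k})$ does, the non-holomorphic terms $P_k(\tau)$ of weight two having cancelled by virtue of the intersection with $\text{Hol}(\mathbb{H})$ in the definition of the space. I would then apply Proposition~\ref{proposition5.4}: given $f \in \mathcal{E}_k(\mathcal{G},\mathbbm{k})$, the function $(2\pi i)^k f$ lies in $\mathcal{E}_{k,\text{un}}(\mathcal{G},\mathbbm{k})$ and hence has Fourier coefficients in $(2\pi i)^k\mathbbm{k}$; dividing by $(2\pi i)^k$ shows the coefficients of $f$ lie in $\mathbbm{k}$, which is the assertion. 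The closing statement then follows by taking $\mathbbm{k} = \mathbb{Q}(\zeta_N)$, since Theorem~\ref{theorem4.5new} places the spectral basis $\mathcal{B}_{\mathcal{G},k}$ inside $\mathcal{E}_k(\mathcal{G},\mathbb{Q}(\zeta_N))$.

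The step I expect to demand the most care is the constant term: unlike the higher coefficients it is not visibly algebraic, and extracting the clean factor of $(2\pi i)^k$ genuinely relies on the closed form of Lemma~\ref{lemma5.3} together with the rationality of the Bernoulli values. A secondary point to watch is the weight-two case, where one must confirm that passing to the holomorphic subspace annihilates the $-\frac{\pi}{N^2\text{Im}(\tau)}$-type contributions, so that the surviving genuine Fourier expansion only sees the algebraic coefficients $A_{k,j}(\bar\lambda)$.
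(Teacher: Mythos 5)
Your proposal is correct and follows essentially the same route as the paper: both transport the rationality of the explicit Fourier coefficients in \eqref{2.6new} across the comparison $\mathcal{E}^{\ast}_{k}(\mathcal{G},\mathbbm{k}) = (2\pi i)^{-k}\mathcal{E}^{\ast}_{k,\text{un}}(\mathcal{G},\mathbbm{k})$ of Proposition~\ref{proposition5.4}, with holomorphy killing the $P_k(\tau)$ terms in weight two, and then specialize to $\mathbbm{k} = \mathbb{Q}(\zeta_N)$ for the spectral basis via Theorem~\ref{theorem4.5new}. Your only addition is to spell out, via Lemma~\ref{lemma5.3} and the rationality of Bernoulli values, why the constant term $A_{k,0}(\bar{\lambda})$ is $(2\pi i)^k$ times an element of $\mathbb{Q}(\zeta_N)$ --- a point the paper's proof leaves implicit (it is already embedded in the proof of Proposition~\ref{proposition5.4}), so this is a welcome clarification rather than a divergence.
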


\begin{proof}
The Fourier expansion formula \eqref{2.6new} shows that \[(2\pi i)^{-k}G_{k}(\bar{\lambda}, N) = \sum_{j \geq 0} A_j \mathbf{e}(\frac{j\tau}{N}) + (2\pi i)^{-k}P_k(\tau)\] for each $\bar{\lambda} \in \Lambda_{N}$ where $A_j$-s lie in $\mathbb{Q}(\zeta_N)$. For each $\mathcal{G}$-orbit $\mathcal{O}$ in $\Lambda_{N}$, one can construct unnormalized orbital sum $G_{k}(\mathcal{O}, \mathcal{G}; s)$ exactly as in Proposition~\ref{proposition5.1}. Now the assertion above implies that $\mathcal{E}^{\ast}_{k}(\mathcal{G},\mathbbm{k}) = (2\pi i)^{-k}\mathcal{E}_{k, \text{un}}^{\ast}(\mathcal{G},\mathbbm{k})$ admits a basis so the Fourier coefficients of the holomorphic part of each basis function lie in $\mathbb{Q}(\zeta_N)$. Proof of the first part is now clear. The second part follows from the first part since $\mathcal{B}_{\mathcal{G},k}$ is contained $\mathcal{E}_k\big(\mathcal{G}, \mathbb{Q}(\zeta_N)\big)$. 
\end{proof}

\section{Application to the Eichler-Shimura theory}   
\label{section6} 
Let $k$ be an integer $\geq 2$ and set $V_{k-2,\mathbb{C}} = \{P \in \mathbb{C}[X] \mid \text{deg}(P) \leq k-2\}$. There is a right $\mathbb{C}[\Gamma]$-module stricture on $V_{k-2,\mathbb{C}}$ described by 
\[P(X)\lvert_{\gamma} = (cX+d)^{k-2}P(\frac{aX+b}{cX+d}). \hspace{.3cm}(\substack{\gamma \in \Gamma})\]
Suppose that $\mathcal{G}$ is a congruence subgroup and $f \in \mathcal{M}_k(\mathcal{G})$. With each $\tau_0 \in \mathbb{H}$ one associates a cocyle as follows:  
\[\mathsf{p}_{k}^{\tau_0}(f) : \mathcal{G} \to V_{k-2, \mathbb{C}}, \hspace{.3cm} \gamma \mapsto \int^{\tau_0}_{\gamma^{-1}\tau_0}f(\tau)(X-\tau)^{k-2}d\tau.\]
The image of $\mathsf{p}_{k}^{\tau_0}(f)$ in $H^{1}(\mathcal{G}, V_{k-2, \mathbb{C}})$ depends only $f$ and is independent of the choice of $\tau_0$. We define the Eichler-Shimura homomorphism 
\begin{equation}
\label{6.1new}
[\text{ES}_k]: \mathcal{M}_{k}(\mathcal{G}) \to H^{1}(\mathcal{G}, V_{k-2,\mathbb{C}})	
\end{equation}
by setting $[\text{ES}_k](f) = [\mathsf{p}_{k}^{\tau_0}(f)]$ for some $\tau_0 \in \mathbb{H}$. Now suppose $\mathcal{S}_k^{a}(\mathcal{G}) = \{\bar{f} \mid f \in \mathcal{S}_k(\mathcal{G})\}$ is the space of anti-holomorphic cusp forms. We extend the Eichler-Shimura homomorphism \eqref{6.1new} to the space of antiholomorphic cusp forms by complex conjugation: 
\[[\text{ES}_k]^{a}: \mathcal{S}_k^{a}(\mathcal{G}) \to H^{1}(\mathcal{G}, V_{k-2,\mathbb{C}}), \hspace{.3cm} f \mapsto \#[\text{ES}_k](\bar{f});\] where $\#$ is complex conjugation on the cohomology arising from the conjugation on the coefficients.
The Eichler-Shimura homomorphisms together give rise to a Hecke equivariant $\mathbb{C}$-linear map  
\begin{equation}
\label{6.2new}
[\text{ES}_k] \oplus [\text{ES}_k]^{a}: \mathcal{M}_{k}(\mathcal{G}) \oplus \mathcal{S}_{k}^{a}(\mathcal{G}) \xrightarrow{} H^{1}(\mathcal{G}, V_{k-2,\mathbb{C}}). 
\end{equation}
The classical Eichler-Shimura theorem for cusp forms \cite[6.2]{hida} asserts that the homomorphism above is injective on $\mathcal{S}_{k}(\mathcal{G}) \oplus \mathcal{S}_{k}^{a}(\mathcal{G})$ and sends this subspace of the domain onto the parabolic subspace, denoted $H^{1}_{c}(\mathcal{G}, V_{k-2,\mathbb{C}})$, of the group cohomology. We extend the Eichler-Shimura isomorphism theorem for cusp forms to the whole space of modular forms as promised in the introduction. 

\begin{theorem}
\label{theorem6.1new}
The Eichler-Shimura homomorphism \eqref{6.2new} is a Hecke equivariant isomorphism. 
\end{theorem}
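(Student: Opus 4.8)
The plan is to leverage the Hodge-type decomposition $\mathcal{M}_k(\mathcal{G}) = \mathcal{E}_k(\mathcal{G}) \oplus \mathcal{S}_k(\mathcal{G})$ furnished by the spectral basis of Theorem~\ref{theorem4.5new}, reducing everything to the cusp-form case plus a boundary computation for the Eisenstein part. By the classical Eichler--Shimura theorem cited before \eqref{6.2new}, the restriction of \eqref{6.2new} to $\mathcal{S}_k(\mathcal{G}) \oplus \mathcal{S}_k^{a}(\mathcal{G})$ is injective with image the parabolic subspace $H^1_c(\mathcal{G}, V_{k-2,\mathbb{C}})$. Writing $\mathrm{res} = \bigoplus_x \mathrm{res}_x \colon H^1(\mathcal{G}, V_{k-2,\mathbb{C}}) \to \bigoplus_{x \in \mathcal{C}(\mathcal{G})} H^1(\mathcal{G}_x, V_{k-2,\mathbb{C}})$ for the restriction to the cusp stabilizers, the parabolic subspace is by definition $\ker(\mathrm{res})$, so $\mathrm{res}$ identifies the quotient $H^1/H^1_c$ with the image of $\mathrm{res}$. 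Everything will follow once I establish that the composite $\mathrm{res}\circ[\mathrm{ES}_k] \colon \mathcal{E}_k(\mathcal{G}) \to \bigoplus_x H^1(\mathcal{G}_x, V_{k-2,\mathbb{C}})$ is injective with image equal to the image of $\mathrm{res}$. Indeed, injectivity of \eqref{6.2new} is then formal: if $E + h + \bar g \mapsto 0$ with $E \in \mathcal{E}_k(\mathcal{G})$ and $h, g \in \mathcal{S}_k(\mathcal{G})$, then $[\mathrm{ES}_k](E) \in H^1_c = \ker(\mathrm{res})$, whence $E = 0$ by injectivity of $\mathrm{res}\circ[\mathrm{ES}_k]$, and then $h = g = 0$ by the classical theorem. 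Surjectivity is equally formal: the total image $W$ contains $H^1_c$ (classical surjectivity onto the parabolic subspace) and satisfies $\mathrm{res}(W) = \mathrm{image}(\mathrm{res})$, so for any $\alpha \in H^1$ one finds $w \in W$ with $\mathrm{res}(\alpha - w) = 0$, giving $\alpha - w \in H^1_c \subseteq W$ and hence $\alpha \in W$.

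First I would compute each local group $H^1(\mathcal{G}_x, V_{k-2,\mathbb{C}})$. The stabilizer $\mathcal{G}_x$ is generated by a parabolic element $\pi_x$, conjugate to $\pm T^h$, together with $-\mathrm{Id}$ when $-\mathrm{Id} \in \mathcal{G}$; since $|\{\pm\mathrm{Id}\}|$ is invertible in $\mathbb{C}$, the $-\mathrm{Id}$ factor only cuts down to the subspace on which it acts trivially, and $H^1(\mathcal{G}_x, V_{k-2,\mathbb{C}})$ is computed by the coinvariants $V_{k-2,\mathbb{C}}/(\pi_x - 1)V_{k-2,\mathbb{C}}$. Because $V_{k-2,\mathbb{C}} = \mathrm{Sym}^{k-2}$ is a single Jordan block under a unipotent, these coinvariants are one-dimensional exactly when $\pi_x$ acts unipotently on $V_{k-2,\mathbb{C}}$ and vanish otherwise. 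Tracking the sign $(-1)^{k-2}$ contributed by $-\mathrm{Id}$ against the parity of $k$ and the regularity of $x$, one finds that $H^1(\mathcal{G}_x, V_{k-2,\mathbb{C}})$ is one-dimensional precisely for the cusps indexing the spectral basis $\mathcal{B}_{\mathcal{G},k}$ (all cusps if $k$ is even, regular cusps if $k$ is odd with $-\mathrm{Id}\notin\mathcal{G}$) and zero otherwise, in exact agreement with Corollary~\ref{corollary4.6new}.

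The heart of the argument, and the step I expect to be the main obstacle, is the period computation that identifies $\mathrm{res}_{x'}\big([\mathrm{ES}_k](E_{k,x})\big)$ with the constant term of $E_{k,x}$ at $x'$. I would evaluate the cocycle $\mathsf{p}_k^{\tau_0}(E_{k,x})$ on the parabolic generator $\pi_{x'}$, conjugate $x'$ to $\infty$ via its scaling matrix, and let $\tau_0$ approach the cusp along a horocycle; the Fourier expansion at $x'$ then shows that only the constant term $\pi_{\infty}\big(E_{k,x}\lvert_{\sigma_{x'}}\big)$ survives in the coinvariants $H^1(\mathcal{G}_{x'}, V_{k-2,\mathbb{C}})$, the higher Fourier modes producing coboundaries after estimating the tail of the integral. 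By the indicator-of-cusp property established in Lemma~\ref{lemma4.2new}, this local class is a nonzero multiple of the distinguished generator when $x' = x$ and vanishes otherwise; thus the matrix of $\mathrm{res}\circ[\mathrm{ES}_k]$ in the spectral basis and the local generators is diagonal with nonzero entries. For $k \geq 3$ this immediately gives a basis of the entire product $\bigoplus_x H^1(\mathcal{G}_x, V_{k-2,\mathbb{C}})$, so $\mathrm{image}(\mathrm{res})$ is forced to be the whole product and $\mathrm{res}\circ[\mathrm{ES}_k]$ is an isomorphism onto it, as required. The delicate points are the convergence of the period integral as $\tau_0 \to x'$ for all $k \geq 2$, and, for $k = 2$, the treatment of the nonholomorphic term $\tfrac{c}{\mathrm{Im}\,\tau}$ recorded in \eqref{4.5}: there the constant term yields a genuine translation class, $\mathrm{image}(\mathrm{res})$ is the codimension-one subspace cut out by the residue relation among the local classes, and the difference basis of Theorem~\ref{theorem4.5new} maps precisely onto this subspace, so the argument goes through with the dimension count of Corollary~\ref{corollary4.6new} confirming the match.

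Finally, Hecke equivariance is inherited throughout: the maps $[\mathrm{ES}_k]$ and $[\mathrm{ES}_k]^{a}$ are Hecke equivariant by their definition through period integrals, the decomposition $\mathcal{M}_k = \mathcal{E}_k \oplus \mathcal{S}_k$ and the parabolic subspace $H^1_c$ are Hecke stable, and the restriction $\mathrm{res}$ to the cusps commutes with the Hecke action. Assembling the injectivity of the cusp-form part, the isomorphism $\mathrm{res}\circ[\mathrm{ES}_k]$ on the Eisenstein part, and the formal surjectivity argument above yields that \eqref{6.2new} is a Hecke equivariant isomorphism for every weight $k \geq 2$ and every congruence subgroup $\mathcal{G}$.
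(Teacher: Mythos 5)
Your proposal follows essentially the same architecture as the paper's proof: the commutative diagram \eqref{6.3new}, the coinvariant computation of the boundary groups $H^{1}(\mathcal{G}_x, V_{k-2,\mathbb{C}})$ (this is Proposition~\ref{proposition6.3new}), the identification of the local restriction of $[\mathrm{ES}_k](f)$ with the constant term of $f$ at the cusp (this is Lemma~\ref{lemma6.4new}, combined with the indicator property of Lemma~\ref{lemma4.2new}), the resulting diagonal matrix for $\Theta$ in the spectral basis, and the formal injectivity/surjectivity chase. For $k \geq 3$ your argument coincides with the paper's: the diagonal structure forces $\mathrm{image}(\mathrm{res})$ to be the full boundary sum, and the middle arrow of \eqref{6.3new} is an isomorphism.

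The one genuine thin spot is the weight-$2$ case, and it sits exactly where the paper invests its effort. You assert that $\mathrm{image}(\mathrm{res})$ is ``the codimension-one subspace cut out by the residue relation,'' but this is the nontrivial content at $k=2$: since the difference basis of Theorem~\ref{theorem4.5new} already furnishes a $\big(\lvert\mathcal{C}(\mathcal{G})\rvert-1\big)$-dimensional subspace of $\mathrm{image}(\mathrm{res})$, what your surjectivity argument actually requires is the upper bound $\dim\mathrm{image}(\mathrm{res}) \leq \lvert\mathcal{C}(\mathcal{G})\rvert - 1$, for which you give no argument. The paper proves the equivalent dimension identity \eqref{6.5new} by an Euler-characteristic computation (Proposition~\ref{proposition6.5new}) together with the vanishing of $H^{2}(\mathsf{P}\mathcal{G},\mathbb{C})$, the latter deduced from a torsion-free finite-index subgroup and the injectivity of restriction (Remark~\ref{remark6.2new}). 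Your residue relation is provable by standard means --- identify $H^{1}(\mathcal{G},\mathbb{C})$ with $H^{1}$ of the open curve $\mathsf{P}\mathcal{G}\backslash\mathbb{H}$ (harmless over $\mathbb{C}$ despite torsion) and apply Stokes on the truncated surface --- so the gap is fillable, but as written it is an unproven assertion rather than a proof. Two minor remarks: your worry about the nonholomorphic term $c/\mathrm{Im}(\tau)$ from \eqref{4.5} is moot, since $[\mathrm{ES}_2]$ is only ever applied to the holomorphic differences, never to an individual $E_{2,x}$; and in the period computation, note that the paper first reduces to $\Gamma(N)$ via the injectivity of restriction (Remark~\ref{remark6.2new}) before conjugating the cusp to $\infty$ --- your direct evaluation on the parabolic generator of $\mathcal{G}_{x'}$ also works, but the claim that the higher Fourier modes contribute only coboundaries should be accompanied by the limit argument $\tau_0 \to \infty$ along the imaginary axis, as in the paper's proof of Lemma~\ref{lemma6.4new}.
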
 

Recall that the construction of parabolic subspace \cite[6]{hida} yields an exact sequence: 
\[0 \to H^{1}_{c}(\mathcal{G}, V_{k-2, \mathbb{C}}) \to H^{1}(\mathcal{G}, V_{k-2,\mathbb{C}}) \to \oplus_{x \in \mathcal{C}(\mathcal{G})} H^{1}(\mathcal{G}_x, V_{k-2, \mathbb{C}}).\]
Therefore, there is a commutative diagram:
\begin{equation}
\label{6.3new}
\begin{tikzcd}[column sep=small]
\mathcal{S}_k(\mathcal{G}) \oplus \mathcal{S}_k^{a}(\mathcal{G}) \arrow[d, "\cong"', "{[\text{ES}_k] \oplus [\text{ES}_k]^{a}}"] \arrow[r, hookrightarrow] &  \mathcal{M}_k(\mathcal{G}) \oplus \mathcal{S}_k^{a}(\mathcal{G})  \arrow[d, "{[\text{ES}_k] \oplus [\text{ES}_k]^{a}}"] \arrow[r, twoheadrightarrow] & \mathcal{E}_k(\mathcal{G}) \arrow[d, "\Theta"] \\
H^{1}_{c}(\mathcal{G}, V_{k-2,\mathbb{C}})\arrow[r, hookrightarrow] & H^{1}(\mathcal{G}, V_{k-2,\mathbb{C}}) \arrow[r] & \oplus_{x \in \mathcal{C}(\mathcal{G})} H^{1}\big(\mathcal{G}_x, V_{k-2,\mathbb{C}}\big) 
\end{tikzcd}
\end{equation}
where $\Theta = (\Theta_x)_{x \in \mathcal{C}(\mathcal{G})}$ is the natural map induced by quotient. We begin by recalling some group cohomological preliminaries necessary for our arguments. Let $\mathcal{H}$ be an abstract group and $V$ be a right $\mathbb{C}[\mathcal{H}]$-module. As usual, we denote the collection of cocycles by $Z^{1}(\cdot, \cdot)$. Suppose that $\mathcal{G}$ is a subgroup of $\mathcal{H}$. Given $h \in \mathcal{H}$, there is an isomorphism at the level of cocycles
\[Z^{1}(\mathcal{G}, V) \to Z^{1}(h^{-1}\mathcal{G}h, V); \hspace{.3cm} \text{$P \mapsto P|_{h}$, $P\lvert_{h}(x) := P(hxh^{-1})h$}\]
which gives rise to an isomorphism $H^{1}(\mathcal{G}, V) \cong H^{1}(h^{-1}\mathcal{G}h, V)$. Suppose that $\mathcal{G}$ is a normal subgroup of $\mathcal{H}$. Then $[P] \mapsto [P\lvert_{h}]$ is a well-defined $\mathcal{H}$ action on $H^{1}(\mathcal{G},V)$. In this situation the action of $\mathcal{H}$ on $H^{1}(\mathcal{G}, V)$ factors through $\mathcal{G}$ and restriction yields an isomorphism $H^{1}(\mathcal{H}, V) \cong H^{1}(\mathcal{G}, V)^{\mathcal{H}}$.

\begin{remark}
\label{remark6.2new}
Let the notation be as above but we do not assume $\mathcal{G}$ to be a normal subgroup of $\mathcal{H}$. By general theory \[\text{cores}^{\mathcal{H}}_{\mathcal{G}} \circ \text{res}^{\mathcal{H}}_{\mathcal{G}} = \text{multiplication by $[\mathcal{H}: \mathcal{G}]$}\] where $\text{res}^{\mathcal{H}}_{\mathcal{G}}: H^{*}(\mathcal{H}, V) \to H^{*}(\mathcal{G}, V)$ is the restriction map and $\text{cores}^{\mathcal{H}}_{\mathcal{G}}: H^{*}(\mathcal{G}, V) \to H^{*}(\mathcal{H}, V)$ is the corestriction map \cite[III.9]{brown}. In particular, the restriction map is injective.
\end{remark}

Our first job is to analyze the boundary cohomology groups arising in the bottom right corner of \eqref{6.3new}. For a subset $\mathcal{S}$ of $\Gamma$, let $\langle \mathcal{S} \rangle$ denote the subgroup generated by $\mathcal{S}$. Note that 
\begin{equation}
\label{6.4new}
H^{1}(\langle T^h \rangle, V_{k-2,\mathbb{C}}) \cong \frac{V_{k-2,\mathbb{C}}}{V_{k-2,\mathbb{C}}(T^h-1)} \cong \mathbb{C}
\end{equation}
where the second isomorphism stems from the linear functional $V_{k-2, \mathbb{C}} \to \mathbb{C}$ described by $P(X) \mapsto$ the coefficient of $X^{k-2}$ in $P$. Suppose that $x \in \mathcal{C}(\mathcal{G})$ and choose $\gamma \in \Gamma$ so that $x = \mathcal{G}(\gamma \infty)$. Then $\gamma^{-1}\mathcal{G}_x\gamma = \gamma^{-1}\mathcal{G}\gamma \cap \Gamma_{\infty}$. In more detail, there exists a positive integer $h$ so that  
\begin{equation*}
	\gamma^{-1}\mathcal{G}_x\gamma = \begin{cases}
		\langle -\text{Id}, T^h \rangle \text{ or } \langle T^h \rangle, & \text{if $x \in \mathcal{C}_{\infty}(\mathcal{G})$;}\\
		\langle -T^h \rangle, & \text{if $x \notin \mathcal{C}_{\infty}(\mathcal{G})$}
	\end{cases}
\end{equation*}
where $\mathcal{C}_{\infty}(\mathcal{G})$ is the collection of regular cusps. The following result is certainly well-known among the experts; cf. \cite[6.3]{hida}. 

\begin{proposition}
\label{proposition6.3new} 
Let $\mathcal{G}$ be a congruence subgroup and assume that $-\emph{Id} \notin \mathcal{G}$ if $k$ is odd. Then 
\[H^{1}(\mathcal{G}_x, V_{k-2,\mathbb{C}}) \cong \begin{cases}
		\mathbb{C}, & \text{if $k$ is even;}\\
		\mathbb{C}, & \text{if $k$ is odd and $x \in \mathcal{C}_{\infty}(\mathcal{G})$;}\\
		0, & \text{if $k$ is odd and $x \notin \mathcal{C}_{\infty}(\mathcal{G})$.}
	\end{cases}\]
\end{proposition}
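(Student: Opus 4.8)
The plan is to reduce everything to the cohomology of an infinite cyclic group and then separate the cases by a single linear-algebra computation on $V_{k-2,\mathbb{C}}$. First I would invoke the conjugation isomorphism $H^{1}(\mathcal{G}_x, V_{k-2,\mathbb{C}}) \cong H^{1}(\gamma^{-1}\mathcal{G}_x\gamma, V_{k-2,\mathbb{C}})$ recorded before the statement, so that it suffices to compute the cohomology of the three explicit groups $\langle T^h \rangle$, $\langle -\text{Id}, T^h\rangle$, and $\langle -T^h \rangle$ listed there. Since the first and third are infinite cyclic, $H^{1}$ is just the module of coinvariants $V_{k-2,\mathbb{C}}/V_{k-2,\mathbb{C}}(g-1)$, where $g$ is the chosen generator; this is the content of \eqref{6.4new}. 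I would then write $S$ for the shift operator $P(X) \mapsto P(X+h)$ induced by $T^h$ and record that $T^h$ acts on $V_{k-2,\mathbb{C}}$ as $S$ while $-T^h$ acts as $(-1)^k S$, since $-\text{Id}$ acts by the scalar $(-1)^{k-2} = (-1)^k$. The key structural fact is that $S$ is unipotent on $V_{k-2,\mathbb{C}}$: in the monomial basis it is upper triangular with all diagonal entries equal to $1$.

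For the generator $T^h$, and for $-T^h$ when $k$ is even (so that $(-1)^k = 1$), the relevant operator is $S - I$, which is nilpotent and strictly lowers the degree; its cokernel is one-dimensional, spanned by the class of $X^{k-2}$, exactly as in \eqref{6.4new}. Hence $H^{1} \cong \mathbb{C}$, covering all even-weight cusps whose stabilizer is $\langle T^h\rangle$ or $\langle -T^h\rangle$. For the group $\langle -\text{Id}, T^h\rangle$ occurring at even-weight cusps when $-\text{Id} \in \mathcal{G}$, I would apply the restriction isomorphism $H^{1}(\mathcal{H}, V) \cong H^{1}(\mathcal{G}, V)^{\mathcal{H}}$ recalled above with $\mathcal{H} = \langle -\text{Id}, T^h\rangle$ and $\mathcal{G} = \langle T^h\rangle$, a normal subgroup of finite index two; the residual action of $-\text{Id}$ on $H^{1}(\langle T^h\rangle, V_{k-2,\mathbb{C}}) \cong \mathbb{C}$ is multiplication by $(-1)^k$, hence trivial for even $k$, so the invariants fill out all of $\mathbb{C}$. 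This group never arises for odd $k$ because of the standing hypothesis $-\text{Id}\notin\mathcal{G}$, which is precisely why that hypothesis is imposed.

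The main obstacle, and the only genuinely non-formal point, is the odd-weight irregular case $\langle -T^h \rangle$. Here $g = -T^h$ acts as $-S$, so $g - 1 = -(S+I)$, and I must show this is invertible. This is where the unipotence of $S$ does the work: every eigenvalue of $S+I$ equals $1 + 1 = 2 \neq 0$, so $S+I$ is an automorphism of $V_{k-2,\mathbb{C}}$, the coinvariants vanish, and $H^{1} = 0$. The contrast with the $S - I$ computation above is the crux of the whole proposition: at a regular cusp the generator acts unipotently and leaves a one-dimensional cokernel, whereas the extra sign at an irregular cusp in odd weight converts the difference operator into an invertible sum operator. Assembling the four sub-cases according to the parity of $k$ and the regularity of $x$ then yields the three stated values.
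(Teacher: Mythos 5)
Your proposal is correct, and its overall skeleton matches the paper's proof: reduce via conjugation to the three explicit stabilizers $\langle T^h\rangle$, $\langle -\mathrm{Id}, T^h\rangle$, $\langle -T^h\rangle$, settle the first by \eqref{6.4new}, and settle the second by the restriction isomorphism $H^{1}(\mathcal{H}, V) \cong H^{1}(\mathcal{G}, V)^{\mathcal{H}}$ with $-\mathrm{Id}$ acting by $(-1)^k$. Where you genuinely diverge is in the crux case $\langle -T^h\rangle$: the paper passes to the index-two subgroup $\langle T^{2h}\rangle$, invokes $H^{1}(\langle -T^{h} \rangle, V_{k-2,\mathbb{C}}) \cong H^{1}(\langle T^{2h} \rangle, V_{k-2,\mathbb{C}})^{\langle -T^{h}\rangle}$, and reads off the sign $(-1)^{k-2}$ of the residual action on the one-dimensional target, whereas you observe that $\langle -T^h\rangle$ is itself infinite cyclic and compute its coinvariants directly: the generator acts as $(-1)^k S$ with $S$ the unipotent shift $P(X)\mapsto P(X+h)$, so for $k$ even one gets the nilpotent $S-I$ with one-dimensional cokernel spanned by $X^{k-2}$, while for $k$ odd one gets $-(S+I)$, invertible because all eigenvalues of $S+I$ equal $2$, forcing the coinvariants to vanish. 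Your route is slightly more elementary and self-contained — it needs only the identification of $H^1$ of an infinite cyclic group with coinvariants, already implicit in \eqref{6.4new}, and avoids a second application of the invariants-of-normal-subgroup isomorphism (which, as stated in the paper, silently relies on the coefficients being a $\mathbb{C}$-vector space so that inflation--restriction degenerates); the paper's route, by contrast, isolates the sign $(-1)^{k-2}$ as the single datum distinguishing regular from irregular cusps, which makes the parity mechanism more visible but requires trusting that isomorphism for the non-trivial extension $\langle T^{2h}\rangle \subseteq \langle -T^h\rangle$. Both arguments are complete; your explicit identification of the cokernel generator $X^{k-2}$ and of $S+I$ as an automorphism is a fair trade for the paper's brevity.
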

\begin{proof}
We use the conjugation by $\gamma$ isomorphism above to discover \[H^{1}(\mathcal{G}_x, V_{k-2,\mathbb{C}}) \cong H^{1}(\gamma^{-1}\mathcal{G}_x\gamma, V_{k-2,\mathbb{C}}).\] If $\gamma^{-1}\mathcal{G}_x\gamma = \langle T^h \rangle$ then $H^{1}(\gamma^{-1}\mathcal{G}_x\gamma, V_{k-2,\mathbb{C}}) \cong \mathbb{C}$. Now suppose $\gamma^{-1}\mathcal{G}_x\gamma = \langle -\text{Id}, T^h \rangle$.  Observe that 
\[H^{1}(\langle -\text{Id}, T^h \rangle, V_{k-2,\mathbb{C}}) \cong H^{1}(\langle T^h \rangle, V_{k-2,\mathbb{C}})^{\langle -\text{Id}, T^h \rangle}.\]   
Here $k$ is even and the element $-\text{Id}$ acts trivially on $H^{1}(\langle T^h \rangle, V_{k-2,\mathbb{C}})$. As a consequence $H^{1}(\langle -\text{Id}, T^h \rangle, V_{k-2,\mathbb{C}}) \cong \mathbb{C}$. It remains to consider the case $\gamma^{-1}\mathcal{G}_x\gamma = \langle -T^h \rangle$. In this situation, there is an isomorphism  
\[H^{1}(\langle -T^{h} \rangle, V_{k-2,\mathbb{C}}) \cong H^{1}(\langle T^{2h} \rangle, V_{k-2,\mathbb{C}})^{\langle -T^{h}\rangle}.\]
But $-T^{h}$ acts on $H^{1}(\langle T^{2h} \rangle, V_{k-2,\mathbb{C}})$ by $(-1)^{k-2}$. Hence $H^{1}(\langle -T^{h} \rangle, V_{k-2,\mathbb{C}}) \cong \mathbb{C}$ if $k$ is even and $0$ if $k$ is odd.
\end{proof}
The next lemma is also familiar in literature, but it's often difficult to find a concise proof.  

\begin{lemma}
\label{lemma6.4new}
Let $f \in \mathcal{M}_{k}(\mathcal{G})$ and $x \in \mathcal{C}(\mathcal{G})$. We choose $\gamma \in \Gamma$ with $\gamma\infty = x$. Then the restriction of $[\emph{ES}_k](f)$ to $H^{1}(\mathcal{G}_x, V_{k-2,\mathbb{C}})$ is zero if and only if $\pi_{\infty}(f\lvert_{ \gamma}) = 0$.  
\end{lemma}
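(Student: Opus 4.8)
The plan is to transport the whole computation to the cusp $\infty$ via the conjugation isomorphism $H^{1}(\mathcal{G}_x, V_{k-2,\mathbb{C}}) \cong H^{1}(\gamma^{-1}\mathcal{G}_x\gamma, V_{k-2,\mathbb{C}})$ recalled before Proposition~\ref{proposition6.3new}, using that $\gamma^{-1}\mathcal{G}_x\gamma \subseteq \Gamma_{\infty}$. First I would record how the Eichler cocycle $\mathsf{p}_{k}^{\tau_0}(f)$ behaves under this conjugation. Writing $g := f\lvert_{\gamma}$ and substituting $\tau = \gamma\sigma$ in the defining integral, together with the identities $\mathbf{j}(\gamma, X)\mathbf{j}(\gamma,\sigma)(\gamma X - \gamma\sigma) = X-\sigma$ and $f(\gamma\sigma) = \mathbf{j}(\gamma,\sigma)^{k} g(\sigma)$ and $d(\gamma\sigma) = \mathbf{j}(\gamma,\sigma)^{-2}d\sigma$, all the factors of $\mathbf{j}(\gamma,-)$ cancel by the weight bookkeeping $k-(k-2)-2 = 0$ and I obtain the transformation law
\[\mathsf{p}_{k}^{\tau_0}(f)(\gamma\delta\gamma^{-1})\lvert_{\gamma} = \mathsf{p}_{k}^{\gamma^{-1}\tau_0}(g)(\delta), \qquad \delta \in \gamma^{-1}\mathcal{G}_x\gamma.\]
Since the cohomology class is independent of the base point, this identifies the transported restriction of $[\text{ES}_k](f)$ with the class of the Eichler cocycle of $g = f\lvert_{\gamma}$ restricted to $\gamma^{-1}\mathcal{G}_x\gamma$.

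Next I would compute this restricted class explicitly. Recall from \eqref{6.4new} that $H^{1}(\langle T^{h}\rangle, V_{k-2,\mathbb{C}}) \cong V_{k-2,\mathbb{C}}/V_{k-2,\mathbb{C}}(T^{h}-1) \cong \mathbb{C}$ via the coefficient of $X^{k-2}$, because $P\lvert_{T^{h}} - P = P(X+h) - P(X)$ strictly lowers the degree, so the image $V_{k-2,\mathbb{C}}(T^{h}-1)$ is exactly the polynomials of degree $\leq k-3$; the same holds with $T^h$ replaced by $-T^h$ when $k$ is even, since then $P\lvert_{-T^{h}}(X) = P(X+h)$. Thus the class of a cocycle on an infinite cyclic parabolic group is detected by the $X^{k-2}$-coefficient of its value on a generator. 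Writing the Fourier expansion $g = \sum_{n\geq 0}a_{n}\mathbf{e}(\frac{n\tau}{h})$ (valid since $g$ is fixed by $T^{h}$), any generator $\delta_{0} \in \{T^{h}, -T^{h}\}$ satisfies $\delta_{0}^{-1}(\gamma^{-1}\tau_0) = \gamma^{-1}\tau_0 - h$, so $\mathsf{p}_{k}^{\gamma^{-1}\tau_0}(g)(\delta_{0}) = \int_{\gamma^{-1}\tau_0 - h}^{\gamma^{-1}\tau_0} g(\tau)(X-\tau)^{k-2}\,d\tau$. Its $X^{k-2}$-coefficient is $\int_{\gamma^{-1}\tau_0 - h}^{\gamma^{-1}\tau_0} g(\tau)\,d\tau$; the modes with $n\geq 1$ integrate to zero over the full period because $\mathbf{e}(-n) = 1$, leaving exactly $h\,a_{0} = h\,\pi_{\infty}(f\lvert_{\gamma})$.

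Finally I would assemble the three structural shapes of $\gamma^{-1}\mathcal{G}_x\gamma$ listed before Proposition~\ref{proposition6.3new}. When $\gamma^{-1}\mathcal{G}_x\gamma$ is $\langle T^{h}\rangle$, or $\langle -T^{h}\rangle$ with $k$ even (using $P\lvert_{-T^h}(X)=P(X+h)$ so the coefficient computation applies verbatim), or $\langle -\text{Id}, T^{h}\rangle$ (where restriction to the normal subgroup $\langle T^{h}\rangle$ is an isomorphism onto the invariants, hence injective, as $-\text{Id}$ acts trivially for $k$ even), the restricted class vanishes if and only if $h\,\pi_{\infty}(f\lvert_{\gamma}) = 0$, i.e. if and only if $\pi_{\infty}(f\lvert_{\gamma}) = 0$. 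In the remaining case $\gamma^{-1}\mathcal{G}_x\gamma = \langle -T^{h}\rangle$ with $k$ odd, Proposition~\ref{proposition6.3new} gives $H^{1}(\mathcal{G}_x, V_{k-2,\mathbb{C}}) = 0$, so the restriction is automatically zero; and here $g\lvert_{-T^{h}} = g$ forces $(-1)^{k}g(\tau+h) = g(\tau)$, whence $a_{0} = -a_{0}$ and $\pi_{\infty}(f\lvert_{\gamma}) = 0$ as well, so the equivalence holds with both sides trivially true. (If $k$ is odd and $-\text{Id}\in\mathcal{G}$ then $\mathcal{M}_k(\mathcal{G})=0$ and the statement is vacuous, matching the hypotheses of Proposition~\ref{proposition6.3new}.) I expect the main obstacle to be the first step: pinning down the conjugation/transformation identity for the cocycle with the correct interaction between the right $\lvert_{\gamma}$-action on $V_{k-2,\mathbb{C}}$ and the change of variables in the integral, and then threading the sign bookkeeping ($-\text{Id}$ versus $-T^{h}$) consistently through all three cases.
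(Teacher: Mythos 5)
Your proof is correct, and it reaches the conclusion by a route that differs from the paper's in two substantive ways, though the core mechanism (conjugate the stabilizer to $\infty$ and detect the class by the coefficient of $X^{k-2}$ on a parabolic generator) is shared. First, the paper does not work with $\mathcal{G}_x$ directly: it first pushes everything down to a principal level, using the injectivity of the restriction $H^{1}(\mathcal{G}_x, V_{k-2,\mathbb{C}}) \to H^{1}(\Gamma(N)_x, V_{k-2,\mathbb{C}})$ furnished by the res--cores argument of Remark~\ref{remark6.2new}; since $\Gamma(N)$ is normal, torsion-free, and has only regular cusps for $N \geq 3$, this collapses all three shapes $\langle T^h\rangle$, $\langle -\mathrm{Id}, T^h\rangle$, $\langle -T^h\rangle$ into the single case $\langle T^N\rangle$ and eliminates your sign bookkeeping entirely. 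You instead carry out the three-case analysis on $\gamma^{-1}\mathcal{G}_x\gamma$ yourself, which is more self-contained (no auxiliary level, no corestriction) and has the merit of making explicit why the odd-weight irregular case is consistent: there both the cohomology group and the constant term vanish, via $g(\tau+h) = -g(\tau)$ forcing $a_0 = 0$ -- a point the paper's reduction silently absorbs. Second, the evaluation of the invariant differs: the paper splits $f\lvert_{\gamma} = \pi_{\infty}(f\lvert_{\gamma}) + F$ with $F$ exponentially decaying and extracts $N\pi_{\infty}(f\lvert_{\gamma})$ by letting the base point $\tau_0 \to \infty$ along the imaginary axis (legitimate because the top coefficient is base-point independent), whereas you integrate the Fourier expansion exactly over one period, killing the modes $n \geq 1$ by $\mathbf{e}(-n)=1$ and obtaining $h\,a_0$ with no limiting argument. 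Your conjugation identity $\mathsf{p}_{k}^{\tau_0}(f)(\gamma\delta\gamma^{-1})\lvert_{\gamma} = \mathsf{p}_{k}^{\gamma^{-1}\tau_0}(f\lvert_{\gamma})(\delta)$ checks out (the weight count $k-(k-2)-2=0$ is exactly right, and it matches the cocycle-level map $P \mapsto P\lvert_{h}$ the paper recalls before Proposition~\ref{proposition6.3new}; the paper states the corresponding fact $[\mathrm{ES}_k](f)\lvert_{\gamma} = [\mathrm{ES}_k](f\lvert_{\gamma})$ as a direct calculation), and your identification of $V_{k-2,\mathbb{C}}(T^h - 1)$ with the polynomials of degree $\leq k-3$ is what makes the top-coefficient detection valid, including the degenerate reading at $k=2$. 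In short: the paper buys brevity by outsourcing torsion and irregularity to $\Gamma(N)$; you buy exactness and transparency at the cost of the case analysis. Both are complete proofs.
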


\begin{proof}
Let $\Gamma(N)$ be a principal congruence subgroup contained in $\mathcal{G}$ with $N \geq 3$. The compatibility of the Eichler-Shimura cocyles for $\mathcal{G}$ and $\Gamma(N)$ yields the following commutative diagram: 
\begin{equation*}
\begin{tikzcd}
\mathcal{M}_k(\mathcal{G}) \arrow[d]\arrow[r, "{[\text{ES}_k]}"] & H^{1}(\mathcal{G}_x, V_{k-2,\mathbb{C}})\arrow[d]\\
\mathcal{M}_{k}\big(\Gamma(N)\big) \arrow[r, "{[\text{ES}_k]}"] & H^{1}\big(\Gamma(N)_x, V_{k-2,\mathbb{C}}\big) 
\end{tikzcd}
\end{equation*}
where the left vertical arrow is an inclusion and the right vertical arrow is a 
restriction map. Now Remark~\ref{remark6.2new} shows that the right arrow is injective. Thus, it suffices to verify the statement for $\mathcal{G} = \Gamma(N)$. The conjugation action of $\gamma$ on $H^{1}\big(\Gamma(N), V_{k-2,\mathbb{C}}\big)$ induces a commutative diagram
\begin{equation*}
\begin{tikzcd}
H^{1}\big(\Gamma(N), V_{k-2, \mathbb{C}}\big) \arrow[d]\arrow[r, "{[P] \mapsto [P\lvert_{ \gamma}]}", "\cong"'] & H^{1}\big(\Gamma(N), V_{k-2, \mathbb{C}}\big) \arrow[d]\\
H^{1}\big(\Gamma(N)_{\gamma \infty}, V_{k-2, \mathbb{C}}\big) \arrow[r, "{[P] \mapsto [P\lvert_{ \gamma}]}", "\cong"'] & H^{1}\big(\Gamma(N)_{\infty}, V_{k-2, \mathbb{C}}\big). 
\end{tikzcd}
\end{equation*}
Here the horizontal maps are isomorphisms and the vertical arrows are restriction maps. Now, a direct calculation shows that $[\text{ES}_k](f)\lvert_{ \gamma} = [\text{ES}_k](f\lvert_{ \gamma})$. Write 
$F(\tau) = f\lvert_{\gamma}(\tau) - \pi_{\infty}(f\lvert_{\gamma})$. Then $F$ exhibits exponential decay in a neighborhood of $\infty$. We have 
\[\mathsf{p}_{k}^{\tau_0}(f\lvert_{ \gamma})(T^N) = \pi_{\infty}(f\lvert_{ \gamma})\int^{\tau_0}_{\tau_0 -N}(X-\tau)^{k-2}d\tau + \int^{\tau_0}_{\tau_0 -N}F(\tau)(X-\tau)^{k-2}d\tau.\] 
The coefficient of $X^{k-2}$ in LHS does not depend on the choice of $\tau_0$. The coefficient of $X^{k-2}$ in the first term of RHS equals $N\pi_{\infty}(f\lvert_{ \gamma})$. Letting $\tau_0 \to \infty$ along the imaginary axis we see that the image of $\mathsf{p}_k^{\tau_0}(f\lvert_{ \gamma})$ in $H^{1}\big(\Gamma(N)_{\infty}, V_{k-2,\mathbb{C}}\big) \cong \mathbb{C}$ equals $N \pi_{\infty}(f\vert_{\gamma})$. Hence the restriction of $[\text{ES}_k](f)$ to $H^{1}\big(\Gamma(N)_{\gamma \infty}, V_{k-2,\mathbb{C}}\big)$ vanishes if and only if $\pi_{\infty}(f\lvert_{ \gamma}) = 0$. 
\end{proof}

The action of $\Gamma$ on $\mathbb{H}$ factors through $\mathsf{P}\Gamma = \Gamma/\{\pm\text{Id}\}$, that is to say, $-\text{Id}$ acts trivially on each point. Suppose that $\mathsf{P}\mathcal{G}$ is the image of $\mathcal{G}$ in $\mathsf{P}\Gamma$. If $k$ is even, then the $\mathcal{G}$-action on $V_{k-2,\mathbb{C}}$ descends to $\mathsf{P}\mathcal{G}$ and the quotient map induces an isomorphism $H^{1}(\mathcal{G}, V_{k-2,\mathbb{C}}) \cong H^{1}(\mathsf{P}\mathcal{G}, V_{k-2,\mathbb{C}})$ that restricts to an isomorphism between the parabolic subspaces. For $k$ odd, we additionally assume that $-\text{Id} \notin \mathcal{G}$. Then $V_{k-2,\mathbb{C}}$ acquires a $\mathsf{P}\mathcal{G}$-module structure via the isomorphism $\mathcal{G} \cong \mathsf{P}\mathcal{G}$. For $x \in \mathcal{C}(\mathcal{G})$, let $\pi_x$ denote a generator for the infinite cyclic group $\mathsf{P}\mathcal{G}_x$. We also fix a set of representatives $\{\varepsilon_1, \ldots, \varepsilon_r\}$ for the elliptic elements in $\mathsf{P}\mathcal{G}$. The following dimension formula is necessary to handle the extra difficulty for weight $2$: 

\begin{proposition}
\label{proposition6.5new}
Let $V$ be a finite-dimensional $\mathbb{C}[\mathsf{P}\mathcal{G}]$-module and $g$ denote the genus of the compact Riemann surface attached to $\mathsf{P}\mathcal{G} \backslash \mathbb{H}$. Then
\begin{gather*}
\sum_{j=0}^{2} (-1)^j \emph{dim}H^{j}(\mathsf{P}\mathcal{G}, V) = \big(2- 2g - \lvert \mathcal{C}(\mathcal{G})\rvert\big) \emph{dim} V - \sum_{j=1}^{r}(\emph{dim} V - n_j^{\emph{ell}}),\\  
\emph{dim}H^{1}_{c}(\mathsf{P}\mathcal{G}, V) = (2g-2)\emph{dim} V + n_0 + n_1 + \sum_{x \in \mathcal{C}(\mathcal{G})} n_x^{\emph{par}} + \sum_{j=1}^{r} (\emph{dim} V - n^{\emph{ell}}_j)  
\end{gather*}
where $n_0 = \emph{dim}V^{\mathsf{P}\mathcal{G}}$, $n_1 = \emph{dim}H^{2}_c(\mathsf{P}\mathcal{G}, V)$, $n_j^{\emph{ell}} = \emph{dim} \{v \in V \mid v\varepsilon_j = v\}$, and $n_x^{\emph{par}} = \emph{dim} \{v(\pi_x -1) \mid v \in V\}$. 
\end{proposition}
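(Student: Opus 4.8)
The plan is to exploit the explicit group-theoretic structure of $\mathsf{P}\mathcal{G}$ as a Fuchsian group of the first kind. Since every congruence subgroup has at least one cusp, $t := \lvert\mathcal{C}(\mathcal{G})\rvert \geq 1$, and the standard presentation of such a group (genus $g$, $t$ parabolic classes, and elliptic generators $\varepsilon_1, \ldots, \varepsilon_r$ of finite orders $m_1, \ldots, m_r$) allows one to eliminate one parabolic generator together with the long product relation. This exhibits $\mathsf{P}\mathcal{G}$ as a free product $F * \langle\varepsilon_1\rangle * \cdots * \langle\varepsilon_r\rangle$, where $F$ is free of rank $2g + t - 1$ and $\langle\varepsilon_j\rangle \cong \mathbb{Z}/m_j\mathbb{Z}$. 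Over $\mathbb{C}$ the finite cyclic factors have no higher cohomology and $F$ has cohomological dimension one, so $H^j(\mathsf{P}\mathcal{G}, V) = 0$ for all $j \geq 2$; in particular the left-hand side of the first identity collapses to $\dim H^0 - \dim H^1 = n_0 - \dim H^1(\mathsf{P}\mathcal{G}, V)$.

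For the first formula I would compute the Euler characteristic $\chi(\mathsf{P}\mathcal{G}, V)$ additively along the free product. The Mayer--Vietoris sequence for a free product (from the action on the Bass--Serre tree, edge stabilizer trivial) gives $\chi(G_1 * G_2, V) = \chi(G_1, V) + \chi(G_2, V) - \dim V$, so iterating over the $1 + r$ factors above yields $\chi(\mathsf{P}\mathcal{G}, V) = \chi(F, V) + \sum_{j} \chi(\langle\varepsilon_j\rangle, V) - r\dim V$. Here $\chi(F, V) = (1 - (2g + t - 1))\dim V = (2 - 2g - t)\dim V$ because $F$ is free of rank $2g + t - 1$, while $\chi(\langle\varepsilon_j\rangle, V) = \dim V^{\langle\varepsilon_j\rangle} = n_j^{\mathrm{ell}}$ over $\mathbb{C}$. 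Substituting and regrouping produces exactly $(2 - 2g - \lvert\mathcal{C}(\mathcal{G})\rvert)\dim V - \sum_j(\dim V - n_j^{\mathrm{ell}})$, with $H^0 = V^{\mathsf{P}\mathcal{G}}$ accounting for $n_0$.

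For the second formula I would feed this into the long exact sequence attached to the cusps, namely the continuation to the right of the sequence recalled before Theorem~\ref{theorem6.1new}:
\begin{equation*}
\cdots \to H^1(\mathsf{P}\mathcal{G}, V) \xrightarrow{\ r\ } \bigoplus_{x \in \mathcal{C}(\mathcal{G})} H^1(\mathsf{P}\mathcal{G}_x, V) \to H^2_c(\mathsf{P}\mathcal{G}, V) \to H^2(\mathsf{P}\mathcal{G}, V) = 0.
\end{equation*}
Each stabilizer $\mathsf{P}\mathcal{G}_x$ is infinite cyclic, generated by $\pi_x$, so $H^1(\mathsf{P}\mathcal{G}_x, V)$ is the coinvariant space $V/V(\pi_x - 1)$, of dimension $\dim V - n_x^{\mathrm{par}}$. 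Because $H^2(\mathsf{P}\mathcal{G}, V) = 0$, the cokernel of $r$ is $H^2_c$, of dimension $n_1$, whence $\operatorname{rk}(r) = \sum_x(\dim V - n_x^{\mathrm{par}}) - n_1$. The parabolic cohomology is $H^1_c = \ker r$, so $\dim H^1_c = \dim H^1 - \operatorname{rk}(r)$. Replacing $\dim H^1 = n_0 - (2 - 2g - t)\dim V + \sum_j(\dim V - n_j^{\mathrm{ell}})$ from the first part and simplifying (the $\dim V$ terms combine to $(2g-2)\dim V$) gives the asserted $(2g - 2)\dim V + n_0 + n_1 + \sum_x n_x^{\mathrm{par}} + \sum_j(\dim V - n_j^{\mathrm{ell}})$.

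The main obstacle is foundational rather than computational: one must justify identifying the cohomology of the orbifold $\mathsf{P}\mathcal{G}\backslash\mathbb{H}$ with group cohomology over $\mathbb{C}$ (so that the elliptic points contribute only through their invariants $n_j^{\mathrm{ell}}$), and one must set up the boundary long exact sequence correctly, pinning down $H^j(\partial, V) = \bigoplus_x H^j(\mathsf{P}\mathcal{G}_x, V)$ and the vanishing $H^2(\mathsf{P}\mathcal{G}, V) = 0$ that forces $\operatorname{coker}(r) \cong H^2_c$. Relating the abstract signature data $g$, $t$, $r$, $m_j$ to the congruence subgroup requires only the standard structure theory invoked in Section~\ref{section3}, with no new input beyond it.
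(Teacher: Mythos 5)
Your proposal is correct, and it is genuinely a different route from the paper's, because the paper does not prove Proposition~\ref{proposition6.5new} at all: it defers to \cite[p.~229]{shimura94}, where the second identity is obtained by a direct cocycle count on the standard Fuchsian presentation (generators $a_i, b_i$, elliptic $\varepsilon_j$ with $\varepsilon_j^{m_j}=1$, parabolic $\pi_x$, one long relation), counting dimensions of cocycles, coboundaries, and parabolic conditions by hand, with the first identity extracted from the same computation. You instead use $t = \lvert\mathcal{C}(\mathcal{G})\rvert \geq 1$ to eliminate one parabolic generator together with the long relation, exhibiting $\mathsf{P}\mathcal{G} \cong F \ast \langle\varepsilon_1\rangle \ast \cdots \ast \langle\varepsilon_r\rangle$ with $F$ free of rank $2g+t-1$, and then get the Euler characteristic from Mayer--Vietoris additivity along the Bass--Serre tree, $\chi(G_1 \ast G_2, V) = \chi(G_1,V) + \chi(G_2,V) - \dim V$, together with $\chi(F,V) = (1-\operatorname{rk}F)\dim V$ and $\chi(\langle\varepsilon_j\rangle, V) = n_j^{\mathrm{ell}}$ in characteristic zero; your bookkeeping is accurate and reproduces both displayed formulas, and I verified the second derivation: $\dim H^1(\mathsf{P}\mathcal{G}_x, V) = \dim V - n_x^{\mathrm{par}}$ for the infinite cyclic stabilizer, and $\dim H^1_c = \dim H^1 - \sum_x(\dim V - n_x^{\mathrm{par}}) + n_1$ combined with the first identity yields $(2g-2)\dim V + n_0 + n_1 + \sum_x n_x^{\mathrm{par}} + \sum_j(\dim V - n_j^{\mathrm{ell}})$. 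What your approach buys: the vanishing $H^j(\mathsf{P}\mathcal{G}, V) = 0$ for $j \geq 2$ falls out for free (the paper needs $H^2(\mathsf{P}\mathcal{G},\mathbb{C})=0$ separately in the proof of Theorem~\ref{theorem6.1new} and gets it by a torsion-free finite-index subgroup argument plus Remark~\ref{remark6.2new}), and the elliptic contributions appear conceptually rather than through relation-chasing. What the cited route buys: Shimura's count builds the parabolic conditions directly into the cocycle space, so it never needs the connecting map into $H^2_c$, whereas your second formula requires the four-term extension $H^1 \to \oplus_x H^1(\mathsf{P}\mathcal{G}_x, V) \to H^2_c \to H^2 = 0$ of the three-term sequence the paper records before \eqref{6.3new}; this is legitimate provided $H^2_c$ is taken as the relative cohomology of the Borel--Serre pair $(X, \partial X)$, which is exactly how $n_1$ is defined in the framework of \cite{hida} that the paper invokes, so you are right to flag it as the main foundational point rather than a computational one. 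Two details to pin down in a final write-up: the representatives $\varepsilon_1, \ldots, \varepsilon_r$ must be one generator per conjugacy class of maximal elliptic cyclic subgroups (one per elliptic point of $\mathsf{P}\mathcal{G}\backslash\mathbb{H}$), so that they match the elliptic generators of the presentation and $n_j^{\mathrm{ell}} = \dim V^{\langle\varepsilon_j\rangle}$; and the identification of $H^1(\mathsf{P}\mathcal{G}_x, V)$ with the coinvariants $V/V(\pi_x - 1)$ should be stated for the right-module convention the paper uses.
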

\begin{proof}
The second identity is standard, and we refer the readers to \cite[p. 229]{shimura94} for a demonstration. The first identity is a consequence of the arguments employed to establish the other identity.
\end{proof}

We combine the observations above to arrive at the proof of Theorem~\ref{theorem6.1new}. 

\paragraph{Proof of Theorem~\ref{theorem6.1new}.} First, observe that both the domain and codomain of \eqref{6.2new} are zero if $k$ is odd and $-\text{Id} \in \mathcal{G}$. Assume that $-\text{Id} \notin \mathcal{G}$ whenever $k$ is odd. Now suppose $k \geq 3$. Write $\mathcal{C}_k(\mathcal{G}) := \mathcal{C}(\mathcal{G})$ if $k$ is even, and $\mathcal{C}_{\infty}(\mathcal{G})$ if $k$ is odd. Let $x, y \in \mathcal{C}_{k}(\mathcal{G})$. Lemma~\ref{lemma4.2new} and Lemma~\ref{lemma6.4new} together imply that $\Theta_y(E_{k,x}) \neq 0$ if and only if $x = y$ in $\mathcal{C}_k(\mathcal{G})$. Now, an application of Proposition~\ref{proposition6.3new} yields, $\Theta$ is injective and surjective onto $\oplus_{x \in \mathcal{C}_k(\mathcal{G})} H^{1}(\mathcal{G}_x, V_{k-2,\mathbb{C}})$. It follows that the middle arrow of \eqref{6.3new} is an isomorphism. 

Now suppose $k = 2$ and let $x, y \in \mathcal{C}(\mathcal{G}) - \{x_0\}$. In this case, we have $\Theta_y\big(E_{2,x} - \frac{\lvert \text{or}_{\mathcal{G},N}(x)\rvert}{\lvert \text{or}_{\mathcal{G},N}(x_0)\rvert}E_{2,x_0}\big) \neq 0$ if and only if $x = y$ in $\mathcal{C}(\mathcal{G})$. Therefore, $\Theta$ is an injective map. To finish off the proof it suffices to verify that 
\begin{equation}
\label{6.5new}
H^{1}(\mathcal{G}, \mathbb{C}) = H^{1}_c(\mathcal{G}, \mathbb{C}) + \text{dim} \mathcal{E}_k(\mathcal{G}) = H^{1}_c(\mathcal{G}, \mathbb{C}) + \lvert \mathcal{C}(\mathcal{G}) \rvert -1. 
\end{equation}
One uses the identities in Proposition~\ref{proposition6.3new} for this purpose. The formula for $H^{2}(\mathsf{P}\mathcal{G}, \cdot)$ given in Proposition~8.2 of \cite[8.1]{shimura94} implies $n_1 = 1$. Now suppose $\mathsf{P}\mathcal{G}'$ is a torsion-free finite-index subgroup of $\mathsf{P}\mathcal{G}$. Since open surfaces have no cohomology in degree $2$, it follows that $H^{2}(\mathsf{P}\mathcal{G}', \mathbb{C})$. By Remark~\ref{remark6.2new}, the restriction map $H^{2}(\mathsf{P}\mathcal{G}, \mathbb{C}) \to H^{2}(\mathsf{P}\mathcal{G}', \mathbb{C})$ is injective. Thus $H^{2}(\mathsf{P}\mathcal{G}, \mathbb{C})$ is also zero. Now the identity \eqref{6.5new} follows from Proposition~\ref{proposition6.3new}. \hfill $\square$

\section{Connections with the arithmetic theory}
\label{section7new}
\subsection{Eisenstein series with nebentypus}
\label{section7.1new}
We begin by introducing a few notation useful for the current discussion. For a finite abelian group $A$, let $\widehat{A}$ denote the dual group described by $\widehat{A} := \text{Hom}(A, \mathbb{C}^{\times})$. If $A \twoheadrightarrow B$ is a quotient map with kernel $K$ then one identifies $\widehat{B}$ with the annihilator of $K$ in $\widehat{A}$. Put 
\[U_N = (\mathbb{Z}/N)^{\times}. \hspace{.3cm}(\substack{N \geq 1})\]
If $M \mid N$ then the surjection $U_N \twoheadrightarrow U_M$ turns $\widehat{U}_M$ into a subgroup of $\widehat{U}_N$. 

Let $k$ be an integer $\geq 2$ and $\chi \in \widehat{U}_N$. We consider $\chi$ as a character on $\Gamma_{0}(N)$ through the canonical homomorphism $\Gamma_0(N) \twoheadrightarrow U_N$ described by 
$\begin{pmatrix}
	a & b\\
	c & d
\end{pmatrix} \mapsto d (\text{mod }N)$. Set \[\mathcal{M}_k(N, \chi) = \{f \in \mathcal{M}_k\big(\Gamma_{1}(N)\big) \mid f\lvert_{ \gamma} = \chi(\gamma)f, \forall \gamma \in \Gamma_{0}(N)\}\] 
and write \[\text{$\mathcal{E}_k(N, \chi) = \mathcal{M}_k(N, \chi) \cap \mathcal{E}_k\big(\Gamma_{1}(N)\big)$, $\mathcal{S}_k(N, \chi) = \mathcal{M}_k(N, \chi) \cap \mathcal{S}_k\big(\Gamma_{1}(N)\big)$.}\] 
Note that $\mathcal{M}_k(N, \chi) = \{0\}$ unless $\chi(-1) = (-1)^{k}$. Moreover, there are direct sum decompositions 
\begin{equation*}
	\begin{aligned}
		\mathcal{M}_k(N, \chi) & = \mathcal{E}_k(N, \chi) \oplus \mathcal{S}_k(N, \chi),\\
		W_k\big(\Gamma_{1}(N)\big) & = \text{$\bigoplus_{\chi \in \widehat{U}_N} W_k(N, \chi)$ where $W$ is one of $\{\mathcal{M}, \mathcal{S}, \mathcal{E}\}$.}
	\end{aligned}
\end{equation*}
We call the elements of $\mathcal{M}_k(N, \chi)$ modular forms of weight $k$ with nebentypus $\chi$. This subsection primarily aims to describe a basis for the space of Eisenstein series $\mathcal{E}_{k}(N, \chi)$ parameterized by the cusps of $\Gamma_{0}(N)$ as explained in the introduction. If $N \in \{1,2\}$ then the eigenspace decomposition of the space of Eisenstein series has only one summand, namely $\mathcal{E}_{k}\big(\Gamma_{1}(N)\big)$, which already admits a spectral basis by the theory in Section~\ref{section4}. Thus without loss of generality, we can assume that $N \geq 3$.  Our discussion freely uses the description of the spectral bases for $\Gamma_{1}(N)$ and $\Gamma_{0}(N)$ given in Appendix~\ref{appendix}.    

Let $N$ be a positive integer $\geq 3$ and $\delta$ be a positive divisor of $N$. A character $\chi \in \widehat{U}_N$ is \textit{$\delta$-good} if there exists a factorization $\chi = \chi_1 \chi_2$ with $\chi_1 \in \widehat{U}_{\frac{N}{\delta}}$ and $\chi_2 \in \widehat{U}_{\delta}$. Given an orbit of $\Gamma_{0}(N)$ in $\Lambda_{N}$ (Appendix~\ref{appendixA.2}) described by \[\lambda = (\lambda_1, \bullet) \in \mathcal{O}_{N}^{(0)}\] our theory attaches an Eisenstein series with the cusp described by the orbit of $\lambda$ only if $\chi$ is $\delta$-good where $\delta = \gcd(\lambda_1, N)$; cf. \cite[3.1]{young}. Observe that for $\lambda = (0,1)$ the latter condition imposes no constraint on $\chi$. If $\lambda = (\frac{N}{2}, 1)$ and $\chi$ is $\frac{N}{2}$-good then there is a unique factorization $\chi = \chi_1 \chi_2$ with $\chi_1 = \text{trivial}$ and $\chi_2 = \chi$. Let $k$ be a positive integer $\geq 2$ and $s$ be a complex number with $\text{Re}(k+2s) > 1$. Recall the notation $E_k^{(1)}(\cdot)$ from \eqref{A.1}. Define 
\begin{equation}
\label{7.1new}
\begin{gathered}
E_{k, \chi}^{(0)}\big((0,1), N; s\big) := \frac{1}{2}\sum_{\substack{0 \leq \lambda_2 \leq N-1,\\ \gcd(\lambda_2, N) = 1}} \chi(\lambda_2)^{-1} E_{k}^{(1)}\big((0,\lambda_2), N;s\big),\\
E_{k, \chi}^{(0)}\big((\frac{N}{2},1), N; s\big) \hspace{8cm} \\
:= \frac{1}{2}\sum_{\substack{0 \leq \lambda_2 \leq \frac{N}{2}-1,\\ \gcd(\lambda_2, \frac{N}{2}) = 1}} \chi(\lambda_2)^{-1} E_{k}^{(1)}\big((\frac{N}{2},\lambda_2), N;s\big) \hspace{.2cm}\text{if $\chi \in \widehat{U}_{\frac{N}{2}}$.} 
\end{gathered}
\end{equation}

Let $(\delta, \lambda_0) \in \mathcal{O}_{N}^{(0)}$ where $1 \leq \delta < \frac{N}{2}$ and $\lambda_0$ is an integer coprime to $\gcd(\delta, \frac{N}{\delta})$ satisfying $0 \leq \lambda_0 \leq \gcd(\delta, \frac{N}{\delta}) - 1$. Suppose that $\chi$ is a $\delta$-good character and $\chi = \chi_1 \chi_2$ is a factorization of $\chi$. With this data, one attaches a series    
\begin{equation}
\label{7.2new}
E_{k, \chi_1, \chi_2}^{(0)}\big((\delta,\lambda_0), N; s\big) := \frac{1}{2}\sum_{(a, \lambda_2)} \chi_1(a)\chi_2(\lambda_2)^{-1} E_{k}^{(1)}\big((a\delta,\lambda_2),N;s\big)
\end{equation}
where the indexing set for the sum is the same as \eqref{A.3}. It is straightforward to check that the functions defined by \eqref{7.1new}-\eqref{7.2new} satisfy \[f(\tau;s)\lvert_{k,\gamma} = \chi(\gamma)f(\tau;s), \hspace{.3cm} \forall \gamma \in \Gamma_{0}(N).\] 
In particular these functions are zero unless $\chi(-1) = (-1)^k$. One can employ the correspondence between $\mathcal{O}_{N}^{(0)}$ and the cusps of $\Gamma_{0}(N)$ as in the discussion around \eqref{4.6new} to parameterize these Eisenstein series using the cusps of $\Gamma_{0}(N)$. But for convenience of presentation, we prefer the orbital sum picture in this article. The following lemma records a few preliminary observations regarding the formalism introduced above.

\begin{lemma}
\label{lemma7.1new}
Let $\chi \in \widehat{U}_N$ and $\delta$ be a positive divisor of $N$. Then 
\begin{enumerate}[label=(\roman*), align=left, leftmargin=0pt]
\item $\chi$ is $\delta$-good if and only if $\chi \in \widehat{U}_{\emph{lcm}(\delta, \frac{N}{\delta})}$. 
		
\item Let the notation be as in \eqref{7.2new}. Suppose that $\chi = \chi_1\chi_2 = \chi_1'\chi_2'$ are two factorizations attached to $\chi$. Then there exists a nonzero complex number $\alpha$ so that 
\[E_{k, \chi_1, \chi_2}^{(0)}\big((\delta,\lambda_0), N; s\big) = \alpha E_{k, \chi'_1, \chi'_2}^{(0)}\big((\delta,\lambda_0), N; s\big).\]
\end{enumerate}	
\end{lemma}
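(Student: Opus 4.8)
The plan is to handle the two parts separately, reducing both to the elementary behaviour of the subgroups $\widehat{U}_M \subseteq \widehat{U}_N$ (for $M \mid N$) under products and intersections, and then, for part (ii), to isolate a single orbit-invariance computation as the real content.

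For part (i) I would first record the easy containment. Set $L = \mathrm{lcm}(\delta, \frac{N}{\delta})$ and $d = \gcd(\delta, \frac{N}{\delta})$, so that $Ld = N$. Whenever $M \mid L \mid N$, the quotient $U_N \twoheadrightarrow U_M$ factors through $U_N \twoheadrightarrow U_L$, so any character of $U_N$ that factors through $U_M$ also factors through $U_L$; hence $\widehat{U}_M \subseteq \widehat{U}_L$ inside $\widehat{U}_N$. Applying this to $M = \delta$ and $M = \frac{N}{\delta}$ shows that every $\delta$-good character lies in $\widehat{U}_L$. For the converse I would pass to the prime decomposition $\chi = \prod_p \chi_p$ with $\chi_p \in \widehat{U}_{p^{e_p}}$; writing $a_p = v_p(\delta)$ and $b_p = v_p(\frac{N}{\delta})$ (so $a_p + b_p = e_p$ and $v_p(L) = \max(a_p,b_p)$), the hypothesis $\chi \in \widehat{U}_L$ says precisely that $\chi_p \in \widehat{U}_{p^{\max(a_p,b_p)}}$ for each $p$. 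Collecting the primes with $a_p \geq b_p$ into a character $\chi_2 \in \widehat{U}_\delta$ and the remaining ones into $\chi_1 \in \widehat{U}_{N/\delta}$ produces a factorization $\chi = \chi_1\chi_2$, proving $\chi$ is $\delta$-good.

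For part (ii), given two factorizations $\chi = \chi_1\chi_2 = \chi_1'\chi_2'$, I would set $\psi := \chi_1(\chi_1')^{-1}$, and observe that $\psi = \chi_2'\chi_2^{-1}$ as well, so $\psi$ lies in both $\widehat{U}_{N/\delta}$ and $\widehat{U}_\delta$. The same prime-by-prime analysis, now using $\widehat{U}_{p^{a_p}} \cap \widehat{U}_{p^{b_p}} = \widehat{U}_{p^{\min(a_p,b_p)}}$, gives $\psi \in \widehat{U}_\delta \cap \widehat{U}_{N/\delta} = \widehat{U}_d$. Substituting $\chi_1 = \psi\chi_1'$ and $\chi_2 = \psi^{-1}\chi_2'$ into the coefficient in \eqref{7.2new}, each summand acquires the factor $\psi(a)\psi(\lambda_2) = \psi(a\lambda_2)$, which, because $\psi \in \widehat{U}_d$, depends only on the class of $a\lambda_2$ modulo $d$.

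The crux—and the step where I expect the real work—is to show that $a\lambda_2 \bmod d$ is constant as $(a,\lambda_2)$ runs over the index set of \eqref{A.3}, i.e. that it is an invariant of the $\Gamma_0(N)$-orbit attached to $(\delta,\lambda_0)$. Using the description of the $\Gamma_0(N)$-action on $\Lambda_N$ from Appendix~\ref{appendix}, an element of $\Gamma_0(N)$ sends $(a\delta,\lambda_2)$ to $(\alpha a\delta,\ \beta a\delta + \alpha^{-1}\lambda_2) \bmod N$ with $\alpha \in U_N$, so the new product of (first coordinate divided by $\delta$) and (second coordinate) is $(\alpha a)(\beta a\delta + \alpha^{-1}\lambda_2) \equiv \alpha\beta a^2\delta + a\lambda_2 \pmod{d}$; since $d \mid \delta$ the first term vanishes mod $d$, leaving $a\lambda_2$ unchanged. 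I would also check, from the order-$N$ condition $\gcd(a\delta,\lambda_2,N)=1$, that $\gcd(a\lambda_2,d)=1$, so that $a\lambda_2 \in U_d$ and $\psi(a\lambda_2)$ is defined; evaluating at the representative $(\delta,\lambda_0)$ identifies the common value as $\lambda_0$. The factor $\psi(\lambda_0)$ then pulls out of the sum, yielding $E^{(0)}_{k,\chi_1,\chi_2}\big((\delta,\lambda_0),N;s\big) = \psi(\lambda_0)\,E^{(0)}_{k,\chi_1',\chi_2'}\big((\delta,\lambda_0),N;s\big)$, so $\alpha = \psi(\lambda_0) \neq 0$ is the desired nonzero scalar.
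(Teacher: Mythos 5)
Your proof is correct and follows essentially the same route as the paper: part (i) is the paper's one-line observation that the subgroup generated by $\widehat{U}_{\delta}$ and $\widehat{U}_{\frac{N}{\delta}}$ inside $\widehat{U}_N$ is $\widehat{U}_{\mathrm{lcm}(\delta, \frac{N}{\delta})}$ (which you verify prime-by-prime), and part (ii) is the paper's argument verbatim, with $\psi = \chi_1(\chi_1')^{-1} = \chi_2'\chi_2^{-1} \in \widehat{U}_{\gcd(\delta,\frac{N}{\delta})}$ and $\alpha = \psi(\lambda_0)$, matching the paper's $\alpha = \psi(\lambda_0)^{-1}$ for its oppositely-oriented $\psi$. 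One remark: the step you flag as the crux --- that $a\lambda_2 \bmod \gcd(\delta, \frac{N}{\delta})$ is constant equal to $\lambda_0$ over the summation range --- requires no orbit-invariance computation at all, since the congruence $a\lambda_2 \equiv \lambda_0 \, \big(\gcd(\delta, \frac{N}{\delta})\big)$ is one of the \emph{defining} conditions of the index set in \eqref{A.3}; this is why the paper can dismiss the final step as "an easy calculation". Your explicit $\Gamma_0(N)$-computation is correct but redundant here: what it actually re-derives is the well-definedness of the orbit parameterization $(\delta, \lambda_0)$ in Appendix~\ref{appendixA.2}, which the paper takes as given from the orbit description. The well-definedness check that $\psi(a\lambda_2)$ makes sense is also slightly more direct than you make it: $\gcd(a, \frac{N}{\delta}) = 1$ and $\gcd(\lambda_2, \delta) = 1$ are already in the index set, and $\gcd(\delta, \frac{N}{\delta})$ divides both $\delta$ and $\frac{N}{\delta}$.
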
 

\begin{proof}
\begin{enumerate}[label=(\roman*), align=left, leftmargin=0pt]
\item The subgroup generated by $\widehat{U}_{\frac{N}{\delta}}$ and $\widehat{U}_{\delta}$ equals $\widehat{U}_{\text{lcm}(\delta, \frac{N}{\delta})}$. Thus $\chi$ is $\delta$-good if and only if $\chi \in \widehat{U}_{\text{lcm}(\delta, \frac{N}{\delta})}$.     	
\item Write $\psi = \chi_1'\chi_1^{-1} = \chi_2\chi_2'^{-1}$. Then $\psi \in \widehat{U}_{\gcd(\delta, \frac{N}{\delta})}$. Now, an easy calculation using \eqref{7.2new} demonstrates that $\alpha = \psi(\lambda_0)^{-1}$ satisfies the required property. 
\end{enumerate}	
\end{proof}

For $\chi \in \widehat{U}_N$, set 
\[\mathcal{O}_{N,\chi}^{(0)} = \{\lambda = (\lambda_1, \bullet) \in \mathcal{O}_{N}^{(0)} \mid \text{$\chi$ is $\delta$-good where $\delta = \gcd(\lambda_1, N)$}\}.\]
If $(\delta, \lambda_0) \in \mathcal{O}_{N,\chi}^{(0)}$ for some $1 \leq \delta < \frac{N}{2}$, then we fix a factorization of $\chi = \chi_1 \chi_2$ and write $E_{k, \chi}^{(0)}$ instead of $E_{k, \chi_1, \chi_2}^{(0)}$. As before one abbreviates the specializations of the Eisenstein series at $s = 0$ by $E_{k, \chi}^{(0)}(\lambda, N)$. Observe that the function $E_{k, \chi}^{(0)}(\lambda, N)$ is holomorphic on $\mathbb{H}$ unless $k = 2$ and $\chi$ is trivial. Since the latter case is already covered by Theorem~\ref{theorem4.5new} we exclude it from our discussion below.

\begin{theorem}
\label{theorem7.2new}
Let $k$ be an integer $\geq 2$. Suppose that $N \geq 3$ and $\chi \in \widehat{U}_N$ with $\chi(-1) = (-1)^{k}$. If $k = 2$ then assume that $\chi$ is nontrivial. Then 
\[\mathcal{B}_{k,\chi} := \{E_{k, \chi}^{(0)}(\lambda,N) \mid \lambda \in \mathcal{O}_{N, \chi}^{(0)}\}\] 
is a basis for $\mathcal{E}_k(N, \chi)$.  
\end{theorem}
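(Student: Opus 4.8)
The plan is to realize $\mathcal{E}_k(N,\chi)$ as the $\chi$-isotypic piece of $\mathcal{E}_k\big(\Gamma_1(N)\big)$ under $U_N$ and to identify each $E_{k,\chi}^{(0)}(\lambda,N)$ with the image of a spectral basis element under the corresponding eigenprojection. Since every element of $\mathcal{E}_k\big(\Gamma_1(N)\big)$ is $\Gamma_1(N)$-invariant, the $\lvert_k$-action of $\Gamma_0(N)$ descends to a linear action of $U_N \cong \Gamma_0(N)/\Gamma_1(N)$, and by definition $\mathcal{E}_k(N,\chi)$ is the subspace on which $d$ acts by $\chi(d)$; thus $\mathcal{E}_k\big(\Gamma_1(N)\big) = \bigoplus_{\chi}\mathcal{E}_k(N,\chi)$ and the projector $P_\chi\colon f \mapsto \frac{1}{\lvert U_N\rvert}\sum_{d\in U_N}\chi(d)^{-1} f\lvert_d$ maps $\mathcal{E}_k\big(\Gamma_1(N)\big)$ onto $\mathcal{E}_k(N,\chi)$. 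By Theorem~\ref{theorem4.5new} the spectral basis $\{E_k^{(1)}(\lambda,N)\}$ is indexed by the cusps (regular cusps if $k$ is odd) of $\Gamma_1(N)$, and \eqref{2.10new} shows that $U_N$ permutes this basis up to signs, the orbits being exactly the cusps of $\Gamma_0(N)$, i.e. the elements of $\mathcal{O}_N^{(0)}$. For $k$ even this is a genuine permutation module, while for $k$ odd it is a signed permutation module whose sign data is governed by the regular-cusp bookkeeping.

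First I would check $\mathcal{B}_{k,\chi}\subseteq\mathcal{E}_k(N,\chi)$: each $E_{k,\chi}^{(0)}(\lambda,N)$ is a finite combination of the holomorphic Eisenstein series $E_k^{(1)}$ (holomorphy failing only for $k=2$ with $\chi$ trivial, which is excluded), and the transformation law displayed just before Lemma~\ref{lemma7.1new} places it in the $\chi$-eigenspace. Next I would match the explicit averages \eqref{7.1new}--\eqref{7.2new} with the eigenprojection: applying $P_\chi$ to the spectral basis element attached to a cusp in the $\Gamma_0(N)$-orbit $\mathcal{O}_\lambda$ factors, by character orthogonality along the stabilizer, as a nonzero constant times $E_{k,\chi}^{(0)}(\lambda,N)$. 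Lemma~\ref{lemma7.1new}(ii) shows that the choice of factorization $\chi=\chi_1\chi_2$ rescales the function only by a nonzero scalar, so $\mathcal{B}_{k,\chi}$ is well defined.

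Spanning is then immediate from surjectivity of $P_\chi$: the images $P_\chi\big(E_k^{(1)}(\lambda,N)\big)$ span $\mathcal{E}_k(N,\chi)$, and by the previous step these images are, up to nonzero scalars, exactly the functions $E_{k,\chi}^{(0)}(\lambda,N)$, one per orbit in $\mathcal{O}_N^{(0)}$, with the non-$\delta$-good orbits contributing $0$. Discarding the zero images leaves precisely $\{E_{k,\chi}^{(0)}(\lambda,N):\lambda\in\mathcal{O}_{N,\chi}^{(0)}\}$. These are linearly independent because the sum defining $E_{k,\chi}^{(0)}(\lambda,N)$ involves only the spectral basis elements lying over the single $\Gamma_0(N)$-cusp $\mathcal{O}_\lambda$, so distinct $\lambda$ have disjoint supports in $\{E_k^{(1)}\}$; hence $\mathcal{B}_{k,\chi}$ is a basis.

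The crux, and the step I expect to be the main obstacle, is the exact computation of the stabilizer $H_\lambda\leq U_N$ of the $\Gamma_1(N)$-cusp attached to $\lambda=(\lambda_1,\bullet)$ together with the sign character $\psi_\lambda$ by which $H_\lambda$ acts, and the verification that the orthogonality sum $\sum_{d\in H_\lambda}\chi(d)^{-1}\psi_\lambda(d)$ is nonzero precisely when $\chi$ is $\delta$-good, $\delta=\gcd(\lambda_1,N)$. I expect $H_\lambda=\ker\big(U_N\twoheadrightarrow U_{\mathrm{lcm}(\delta,N/\delta)}\big)$ with $\psi_\lambda$ trivial for $k$ even, so that the nonvanishing condition becomes $\chi\in\widehat{U}_{\mathrm{lcm}(\delta,N/\delta)}$, which is exactly $\delta$-goodness by Lemma~\ref{lemma7.1new}(i); this uses the explicit orbit descriptions for $\Gamma_1(N)$ and $\Gamma_0(N)$ recorded in Appendix~\ref{appendix}. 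For $k$ odd one must additionally track $\psi_\lambda$ and use $-\mathrm{Id}\notin\Gamma_1(N)$ together with the hypothesis $\chi(-1)=(-1)^k$ to ensure that the regular cusps survive projection and that the surviving orbits are again counted by $\mathcal{O}_{N,\chi}^{(0)}$.
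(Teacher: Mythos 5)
Your proposal is correct, but it proves the theorem by a genuinely different route than the paper. The paper does not decompose the $U_N$-module $\mathcal{E}^{\ast}_k\big(\Gamma_1(N)\big)$ or use an eigenprojector at all: it (a) establishes linear independence of $\mathcal{B}_{k,\chi}$ directly, by choosing for each $\lambda$ a matrix $\gamma_{\lambda}$ moving the corresponding lattice point to $(0,1)$ and invoking the cusp-indicator property of the constant term $\pi_{\infty}$ from \eqref{2.8new}, and (b) replaces your spanning argument with a global dimension count, summing $\lvert \mathcal{O}_{N,\chi}^{(0)}\rvert$ over \emph{all} $\chi$ of parity $(-1)^k$ and showing via Lemma~\ref{lemma7.1new}(i) that the total equals $\dim_{\mathbb{C}}\mathcal{E}^{\ast}_{k}\big(\Gamma_1(N),\mathbb{C}\big)$, so that each $\mathcal{B}_{k,\chi}$ is forced to be maximal inside the eigenspace decomposition (with a one-dimensional correction at $k=2$ absorbed by the trivial character). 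Your route trades that count for the stabilizer computation, and your expectation there is right: for $\lambda=(\delta,\lambda_0)$ one has $\gcd(\lambda_0,\delta)=1$, so $d\lambda_0\equiv\lambda_0\ (\mathrm{mod}\ \delta)$ forces $d\equiv 1\ (\mathrm{mod}\ \delta)$, and together with $d\equiv 1\ (\mathrm{mod}\ \frac{N}{\delta})$ the sign-$(+1)$ stabilizer of the line $\mathbb{C}E_k^{(1)}(\lambda,N)$ is exactly $\ker\big(U_N\twoheadrightarrow U_{\mathrm{lcm}(\delta,\frac{N}{\delta})}\big)$, while the sign part is handled by $\chi(-1)=(-1)^k$; the fiber sums then vanish precisely when $\chi$ fails to be $\delta$-good, matching Lemma~\ref{lemma7.1new}(i). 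What each approach buys: the paper's argument is shorter and needs no stabilizer or sign bookkeeping, but it is indirect, working only after summing over all characters simultaneously and quoting the known dimension of the extended space; yours is self-contained character by character, explains conceptually why $\delta$-goodness is the exact survival criterion, and gives spanning constructively from the surjectivity of $P_{\chi}$, with independence from the visibly disjoint supports of \eqref{7.1new}--\eqref{7.2new} in the spectral basis of Theorem~\ref{theorem4.5new}. Two details you should still write out when executing the plan: for $k=2$ and $\chi$ nontrivial, the $\mathrm{Im}(\tau)^{-1}$ terms of the $E_2^{(1)}$-series in \eqref{2.7new} are constant along each diamond orbit (the orbit size $\frac{N}{\gcd(\lambda_1,N)}$ is unchanged by $\lambda_1\mapsto d^{-1}\lambda_1$), so they cancel under $P_{\chi}$ and the projection of the extended space is already holomorphic; and for $k$ odd, the unique irregular orbit (at $N=4$, $\lambda=(2,1)$, where $E_k^{(1)}$ vanishes) is harmless, because the only $2$-good character modulo $4$ is trivial and fails the parity condition, so no zero vector ever enters $\mathcal{B}_{k,\chi}$.
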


\begin{proof}
We begin by showing that $\mathcal{B}_{k,\chi}$ is a linearly independent subset. For each $\lambda \in \mathcal{O}^{(0)}_{N,\chi}$ choose $\gamma_{\lambda} \in \Gamma$ so that 
\begin{equation*}
\begin{cases}
(0, 1)\gamma_{\lambda} = (0,1), & \text{if $\lambda = (0,1)$;}\\
(\delta, \lambda_2)\gamma_{\lambda} = (0,1), & \text{if $\lambda = (\delta,\lambda_0);$}\\
(\frac{N} {2}, 1)\gamma_{\lambda} = (0,1), & \text{if $\lambda = (\frac{N}{2},1)$}
\end{cases}
\end{equation*}
holds in $\Lambda$. Here $\lambda_2$ is a fixed integer with $0 \leq \lambda_2 \leq \delta -1$, $\gcd(\delta, \lambda_2) = 1$, and $\lambda_2 \equiv \lambda_0 \big($mod $\gcd(\delta, \frac{N}{\delta})\big)$. Then 
\begin{equation*}
\text{$\pi_{\infty}\big(E_{k,\chi}^{(0)}(\lambda',N)\lvert_{\gamma_{\lambda}}\big) \neq 0$ if and only if $\lambda \neq \lambda'$}.
\end{equation*}
It follows that $\mathcal{B}_{k,\chi}$ is linearly independent. Now, we compute  
\begin{equation*}
\begin{aligned}
& \sum_{\substack{\chi,\\ \chi(-1) = (-1)^k}} \lvert \mathcal{O}_{N,\chi}^{(0)} \rvert \\
& = \sum_{\lambda \in \mathcal{O}_{N,\chi}^{(0)}} \bigl\lvert \{\chi \mid \lambda \in \mathcal{O}_{N,\chi}^{(0)}, \chi(-1) = (-1)^k \}\bigr\rvert \\
& = \begin{cases}
\frac{1}{2} \sum_{\delta \mid N} \varphi(\delta) \varphi(\frac{N}{\delta}), \hspace{6cm}\substack{N \neq 4}\\
3, \hspace{8cm}\substack{\text{$N=4$ and $k$ even}}\\
2,  \hspace{8cm}\substack{\text{$N=4$ and $k$ odd}} 
\end{cases}\\
& = \text{dim}_{\mathbb{C}} \mathcal{E}_{k}^{\ast}\big(\Gamma_{1}(N), \mathbb{C}\big)
\end{aligned}
\end{equation*}
where $\varphi$ is Euler's totient function and the third line uses Lemma~\ref{lemma7.1new}(i) to count the number of good characters. Therefore, for $k \geq 3$, the collection $\mathcal{B}_{k,\chi}$ is already a maximal linearly independent subset of $\mathcal{E}_{k}(N,\chi)$. Now suppose $k = 2$. Then 
\[\lvert \mathcal{O}_{N,\text{trivial}}^{(0)} \rvert = \mathcal{E}^{\ast}_2\big(\Gamma_{0}(N), \mathbb{C}\big) = 1 + \text{dim}_{\mathbb{C}} \mathcal{E}_2\big(\Gamma_{0}(N)\big)\] and $\text{dim}_{\mathbb{C}} \mathcal{E}_2\big(\Gamma_{1}(N)\big) = \mathcal{E}^{\ast}_2\big(\Gamma_{1}(N), \mathbb{C}\big) -1$. It follows that in this case also $\mathcal{B}_{k,\chi}$ is a maximal linearly independent subset of $\mathcal{E}_{k}(N,\chi)$ whenever $\chi$ is nontrivial. 
\end{proof}

One can also write the sums in \eqref{7.1new} and \eqref{7.2new} using the unnormalized series instead of the spectral series. Lemma~\ref{lemma7.1new} goes through without any change in the unnormalized setting. Moreover, the isomorphism $\Psi_{\Gamma_1(N), \mathcal{A}}$ in Section~\ref{section5.1} along with the theorem above demonstrates that the unnormalized series with nebentypus indeed yields a basis for $\mathcal{E}_k(N, \chi)$.

\subsection{The twisted Eichler-Shimura theory}
\label{section7.2new}
Let $N$ be a positive integer and $\chi \in \widehat{U}_N$. Then one can twist the $\Gamma_0(N)$-module structure on $V_{k-2,\mathbb{C}}$ to obtain another $\Gamma_{0}(N)$-module $V_{k-2,\mathbb{C}}^{\chi}$ whose underlying space is $V_{k-2,\mathbb{C}}$ and the $\Gamma_{0}(N)$-action is $P(X)\lvert_{\chi, \gamma} = \chi(\gamma)^{-1} P(X)\lvert_{\gamma}$. In this setting, one can associate a twisted cocycle 
\[\mathsf{p}_{k, \chi}^{\tau_0}(f) : \Gamma_0(N) \to V_{k-2, \mathbb{C}}^{\chi}, \hspace{.3cm} \gamma \mapsto  \int^{\tau_0}_{\gamma^{-1}\tau_0}f(\tau)(X-\tau)^{k-2}d\tau\]
which gives rise to a twisted Eichler-Shimura map 
\[[\text{ES}_k]_{\chi}: \mathcal{M}_{k}(N, \chi) \to H^{1}\big(\Gamma_0(N), V^{\chi}_{k-2,\mathbb{C}}\big), \hspace{.3cm} f \mapsto [\mathsf{p}_{k, \chi}^{\tau_0}(f)].\]
As before the image of $\mathsf{p}_{k, \chi}^{\tau_0}(f)$ in cohomology is independent of the choice of the base point $\tau_0$. We also need the space of antiholomorphic cusp forms 
\[\mathcal{S}_{k}^{a}(N,\chi) := \{\bar{f} \mid f \in \mathcal{S}_{k}(\Gamma_{1}(N)); \hspace{.1cm} \text{$\overline{f\lvert_{\gamma}} = \chi(\gamma)\bar{f}$ for each  $\gamma \in \Gamma_{0}(N)$}\}.\]
Note that $\mathcal{S}_{k}^{a}(N,\chi) = \overline{\mathcal{S}_{k}(N,\bar{\chi})}$. One extends the Eichler-Shimura homomorphism to this space as follows: 
\[[\text{ES}_k]_{\chi}^{a}: \mathcal{S}_{k}^{a}(N,\chi) \to H^{1}\big(\Gamma_0(N), V^{\chi}_{k-2,\mathbb{C}}\big), \hspace{.3cm} f \mapsto \#[\text{ES}_k]_{\bar{\chi}}(\bar{f}).\]
Here $\#$ is complex conjugation on the cohomology arising from the conjugation on the coefficients. The twisted Eichler-Shimura homomorphisms together give rise to a $\mathbb{C}$-linear homormorphism
\begin{equation}
\label{7.5new}
[\text{ES}_k]_{\chi} \oplus [\text{ES}_k]_{\chi}^{a}: \mathcal{M}_{k}(N, \chi) \oplus \mathcal{S}^{a}_{k}(N, \chi) \to H^{1}\big(\Gamma_{0}(N), V_{k-2,\mathbb{C}}^{\chi}\big)
\end{equation}
that is compatible with the twisted action of the Hecke algebra. As before, the Eichler-Shimura isomorphism for the cusp forms already yields a description for the space of cusp forms:  
\begin{equation*}
\mathcal{S}_{k}(N,\chi) \oplus \mathcal{S}^{a}_{k}(N,\chi) \xrightarrow{\cong} H^{1}_{c}\big(\Gamma_{0}(N), V_{k-2,\mathbb{C}}^{\chi}\big)
\end{equation*}
where $H^{1}_{c}\big(\Gamma_{0}(N), V_{k-2,\mathbb{C}}^{\chi}\big)$ refers to the parabolic subspace of cohomology. Hida \cite[6.3]{hida} used Hecke's Eisenstein series to show that \eqref{7.5new} is an isomorphism whenever $\chi$ is a primitive character. One can utilize our basis for the space of Eisenstein series parameterized by the cusps of $\Gamma_{0}(N)$ to complete his argument for all nebentypus characters. We, nevertheless, deduce the statement from the Eichler-Shimura isomorphism proved in the previous section. 

\begin{theorem}
\label{theorem7.4}
The map \eqref{7.5new} is an isomorphism. 
\end{theorem}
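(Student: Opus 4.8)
The plan is to deduce the twisted statement from the untwisted isomorphism of Theorem~\ref{theorem6.1new} applied to $\mathcal{G} = \Gamma_{1}(N)$, by descending along the normal inclusion $\Gamma_{1}(N) \trianglelefteq \Gamma_{0}(N)$. The quotient $Q := \Gamma_{0}(N)/\Gamma_{1}(N) \cong U_N$ is finite of order invertible in $\mathbb{C}$, so every $\mathbb{C}[Q]$-module is semisimple and splits into $\chi$-isotypic pieces indexed by $\chi \in \widehat{U}_N$. I would first record the two decompositions that the argument must match up. On the automorphic side one has $\mathcal{M}_{k}\big(\Gamma_{1}(N)\big) = \bigoplus_{\chi} \mathcal{M}_{k}(N,\chi)$ and $\mathcal{S}_{k}^{a}\big(\Gamma_{1}(N)\big) = \bigoplus_{\chi} \mathcal{S}_{k}^{a}(N,\chi)$, where $\mathcal{M}_{k}(N,\chi)$ is the $\chi$-eigenspace of the slash action of $\Gamma_{0}(N)$ through $U_N$, and the second identity follows from $\mathcal{S}_k^a\big(\Gamma_1(N)\big) = \overline{\mathcal{S}_k\big(\Gamma_1(N)\big)}$ after reindexing $\chi \mapsto \bar\chi$. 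On the cohomological side, the $|_{h}$-action of $\Gamma_{0}(N)$ on $H^{1}\big(\Gamma_{1}(N), V_{k-2,\mathbb{C}}\big)$ recalled in Section~\ref{section6} factors through $Q$ and yields the eigenspace decomposition $H^{1}\big(\Gamma_{1}(N), V_{k-2,\mathbb{C}}\big) = \bigoplus_{\chi} H^{1}\big(\Gamma_{1}(N), V_{k-2,\mathbb{C}}\big)[\chi]$.

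Next I would verify that the untwisted map $[\text{ES}_k] \oplus [\text{ES}_k]^a$ for $\Gamma_1(N)$ is $Q$-equivariant, so that it respects the two decompositions. For the holomorphic part this is precisely the identity $[\text{ES}_k](f)|_{\gamma} = [\text{ES}_k](f|_{\gamma})$ established inside the proof of Lemma~\ref{lemma6.4new}: if $f \in \mathcal{M}_k(N,\chi)$ then $f|_{\gamma} = \chi(\gamma) f$ for $\gamma \in \Gamma_0(N)$, whence $[\text{ES}_k](f)$ lands in the $\chi$-eigenspace. For the antiholomorphic part I would use that $\#$ commutes with $|_{\gamma}$ (the entries of $\gamma$ are real, so conjugation of coefficients commutes with the module action), which turns a $\bar\chi$-eigenvalue into a $\chi$-eigenvalue and places $[\text{ES}_k]^a\big(\mathcal{S}_k^a(N,\chi)\big)$ in the $\chi$-eigenspace as well. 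Consequently the isomorphism of Theorem~\ref{theorem6.1new} restricts, for each $\chi$, to an isomorphism $\mathcal{M}_{k}(N,\chi) \oplus \mathcal{S}_{k}^{a}(N,\chi) \xrightarrow{\cong} H^{1}\big(\Gamma_{1}(N), V_{k-2,\mathbb{C}}\big)[\chi]$.

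It then remains to identify the target eigenspace with the twisted cohomology and to check that the two families of maps agree. Since $\Gamma_1(N)$ is normal in $\Gamma_0(N)$ and $V_{k-2,\mathbb{C}}^{\chi}$ restricts to $V_{k-2,\mathbb{C}}$ on $\Gamma_1(N)$ (because $\chi$ is trivial there), the restriction isomorphism recalled in Section~\ref{section6} reads $H^{1}\big(\Gamma_{0}(N), V_{k-2,\mathbb{C}}^{\chi}\big) \xrightarrow{\cong} H^{1}\big(\Gamma_{1}(N), V_{k-2,\mathbb{C}}\big)^{\Gamma_0(N)}$, and unwinding the twisted $\Gamma_0(N)$-action on the right identifies these invariants with the $\chi$-eigenspace $H^{1}\big(\Gamma_{1}(N), V_{k-2,\mathbb{C}}\big)[\chi]$. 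Finally, restricting the twisted cocycle $\mathsf{p}_{k,\chi}^{\tau_0}(f)$ from $\Gamma_0(N)$ to $\Gamma_1(N)$ returns the untwisted cocycle $\mathsf{p}_k^{\tau_0}(f)$ verbatim (same integral, and $V_{k-2,\mathbb{C}}^{\chi} = V_{k-2,\mathbb{C}}$ on $\Gamma_1(N)$), so under this identification $[\text{ES}_k]_\chi$ corresponds to $[\text{ES}_k]$ on $\mathcal{M}_k(N,\chi)$, and likewise $[\text{ES}_k]_\chi^a$ corresponds to $[\text{ES}_k]^a$ on $\mathcal{S}_k^a(N,\chi)$ (again because $\#$ commutes with restriction). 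Thus \eqref{7.5new} is conjugate, via two isomorphisms, to the $\chi$-component of the isomorphism of Theorem~\ref{theorem6.1new}, hence itself an isomorphism.

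The main obstacle I anticipate is one of bookkeeping rather than of substance: pinning down the eigenvalue conventions so that the character $\chi$ (and not $\chi^{-1}$) labels the correct eigenspace on the automorphic, holomorphic-cohomological, and antiholomorphic sides simultaneously, and confirming that the twist defining $V_{k-2,\mathbb{C}}^{\chi}$ matches the sign produced by the $\#$-conjugation in the antiholomorphic summand. Once these conventions are reconciled and the equivariance of $[\text{ES}_k]$ under the diamond action is recorded precisely, the descent is formal.
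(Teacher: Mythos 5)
Your proposal is correct and follows essentially the same route as the paper's proof: both deduce the twisted statement from the untwisted isomorphism \eqref{7.6new} for $\Gamma_{1}(N)$ by taking $\chi$-eigenspaces for the diamond action and identifying $H^{1}\big(\Gamma_{0}(N), V^{\chi}_{k-2,\mathbb{C}}\big)$ with $H^{1}\big(\Gamma_{1}(N), V_{k-2,\mathbb{C}}\big)[\chi]$ through the restriction map, which is injective and surjects onto the $\Gamma_{0}(N)$-fixed subspace by normality of $\Gamma_{1}(N)$, closing the same commutative triangle. Your additional verifications — the $Q$-equivariance of the Eichler--Shimura map and the cocycle-level agreement of $\mathsf{p}_{k,\chi}^{\tau_0}$ with $\mathsf{p}_{k}^{\tau_0}$ upon restriction — merely spell out details the paper leaves implicit.
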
 

\begin{proof}
We start with the untwisted Eichler-Shimura isomorphism  
\begin{equation}
	\label{7.6new}
	\mathcal{M}_k\big(\Gamma_{1}(N)\big) \oplus \mathcal{S}^{a}_k\big(\Gamma_{1}(N)\big) \xrightarrow{\cong} H^{1}\big(\Gamma_{1}(N), V_{k-2,\mathbb{C}}\big).
\end{equation}
One considers the $\chi$-eigenspaces for the action of diamond operators to discover that the isomorphism \eqref{7.6new} maps $\mathcal{M}_k(N,\chi) \oplus \mathcal{S}^{a}_k(N, \chi)$ isomorphically onto $H^{1}\big(\Gamma_{1}(N), V_{k-2,\mathbb{C}}\big)[\chi]$. Now the Eichler-Shimura homomorphism and its twisted avatar together yield a commutative diagram
\begin{equation*}
	\begin{tikzcd}
		& H^{1}\big(\Gamma_{0}(N), V_{k-2,\mathbb{C}}^{\chi}\big) \arrow[d]\\
		\mathcal{M}_k(N,\chi) \oplus \mathcal{S}^{a}_k(N, \chi) \arrow[r, "\text{\eqref{7.6new}}"] \arrow[ur, "\text{\eqref{7.5new}}"]& H^{1}\big(\Gamma_{1}(N), V_{k-2,\mathbb{C}}\big)
	\end{tikzcd}
\end{equation*}
where the vertical arrow is the restriction map. Note that the restriction of $\Gamma_{0}(N)$ structure on $V_{k-2, \mathbb{C}}^{\chi}$ to $\Gamma_{1}(N)$ is the same as $\Gamma_{1}(N)$-module $V_{k-2, \mathbb{C}}$. Since $\Gamma_{1}(N)$ is a normal subgroup of $\Gamma_{0}(N)$ the vertical arrow is injective, and surjective onto the fixed subspace $H^{1}\big(\Gamma_{1}(N), V_{k-2,\mathbb{C}}^{\chi}\big)^{\Gamma_{0}(N)}$. But the latter space equals $H^{1}\big(\Gamma_{1}(N), V_{k-2,\mathbb{C}}\big)[\chi]$. Therefore, the up-right arrow in the diagram above must be an isomorphism.  
\end{proof}

\subsection{Hecke action on the spectral basis}
\label{section7.3new}
This subsection aims to study the action of the Hecke algebras \cite[5.2]{ds}
\begin{equation*}
	\begin{aligned}
		\mathbb{T}_{\Gamma_{1}(N)}= \mathbb{Z}[\langle d \rangle, T_{p} \mid d \in (\mathbb{Z}/N\mathbb{Z})^{\times}, p \nmid N], \hspace{.2cm} \mathbb{T}_{\Gamma_{0}(N)} = \mathbb{Z}[T_{p} \mid p \nmid N]
	\end{aligned}
\end{equation*}
on the spectral basis of $\Gamma_{1}(N)$ and $\Gamma_{0}(N)$. As usual, here $\langle \cdot \rangle$ denotes the diamond operator arising from the action of $\Gamma_{0}(N)$. Recall that the Hecke eigenvalues of $\mathbb{T}_{\Gamma_{1}(N)}$ on the space of Eisenstein series are given by the linear combinations of cyclotomic characters modulo $N$. In other words, the Hecke action on the eigenbasis is defined only over the ring of cyclotomic integers. Our computations show that the Hecke action on the spectral basis is defined over $\mathbb{Z}$, illustrating a convenient aspect of the spectral basis. As before, we freely make use of the spectral bases listed in Appendix~\ref{appendix}.  

\begin{lemma}
\label{lemma7.4new}
Let $\bar{\lambda} = \overline{(\lambda_1, \lambda_2)} \in \Lambda_N$. Then 
\begin{enumerate}[label=(\roman*), align=left, leftmargin=0pt]
\item $\langle d \rangle \big(E_{k}^{(1)}(\bar{\lambda}, N)\big) = E_{k}^{(1)}\big((\bar{d}^{-1}\bar{\lambda}_1, \bar{d}\bar{\lambda}_2), N\big)$; \hspace{.3cm}$(\substack{\gcd(d, N) = 1})$
\item $T_p\big(E_{k}^{(1)}(\bar{\lambda}, N)\big) = p^{k-1} E_{k}^{(1)}\big((\bar{\lambda}_1, \bar{p}\bar{\lambda}_2), N\big) + E_{k}^{(1)}\big((\bar{p}^{-1}\bar{\lambda}_1, \bar{\lambda}_2), N\big)$.\hspace{.3cm}$(\substack{p \nmid N})$   
\end{enumerate}
\end{lemma}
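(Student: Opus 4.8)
The plan is to reduce both parts to the lattice-sum description \eqref{2.2new} of the normalized series, so that the action of the Hecke algebra becomes a bookkeeping problem about the vectors $(c,d)\in\Lambda$. Throughout I write $E_k^{(1)}(\bar\lambda,N)$ via its defining orbital sum $\sum_{\bar\mu\in\mathcal O}E_k(\bar\mu,N)$ over the $\Gamma_1(N)$-orbit $\mathcal O=\bar\lambda\Gamma_1(N)$ (Appendix~\ref{appendix}; by Proposition~\ref{proposition4.1new} this is the spectral series $E_{k,x}$ for the associated cusp), and I carry out the manipulations for the full series at a parameter $s$ with $\text{Re}(k+2s)>2$, where absolute convergence licenses every rearrangement, specializing to $s=0$ at the end by the analytic continuation of Section~\ref{section2}.

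For (i) I would use that $\Gamma_1(N)$ is normal in $\Gamma_0(N)$, so $\langle d\rangle f=f\lvert_{k,\sigma_d}$ for a matrix $\sigma_d\in\Gamma_0(N)$ with lower-right entry $\equiv d\pmod N$, that is $\sigma_d\equiv\begin{pmatrix}d^{-1}&*\\0&d\end{pmatrix}\pmod N$. Applying the transformation law \eqref{2.5new} term by term to the orbital sum gives $\langle d\rangle E_k^{(1)}(\bar\lambda,N)=\sum_{\bar\mu\in\mathcal O}E_k(\bar\mu\sigma_d,N)=E_k^{(1)}(\bar\lambda\sigma_d,N)$, the last equality because $\sigma_d$ normalizes $\Gamma_1(N)$ and hence sends $\mathcal O$ to the orbit of $\bar\lambda\sigma_d$. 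Since $\bar\lambda\sigma_d\equiv(\bar d^{-1}\bar\lambda_1,\bar d\bar\lambda_2)\pmod N$, part (i) is immediate.

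For (ii) I would begin from the double-coset definition $T_pf=\sum_i f\lvert_{k,\alpha_i}$ with right-coset representatives for $\Gamma_1(N)\backslash\Gamma_1(N)\begin{pmatrix}1&0\\0&p\end{pmatrix}\Gamma_1(N)$; for $p\nmid N$ one may take $\beta_j=\begin{pmatrix}1&j\\0&p\end{pmatrix}$, $0\le j<p$, together with $\beta=\begin{pmatrix}mp&n\\Np&p\end{pmatrix}$ where $mp-nN=1$, using the freedom in the representative to arrange $n\equiv0\pmod N$ and hence $m\equiv p^{-1}\pmod N$. The computational engine is a one-line rearrangement of \eqref{2.2new}: writing the degree-$p$ Hecke slash as $f\lvert_{k,\alpha}=(\det\alpha)^{k-1}\mathbf{j}(\alpha,\tau)^{-k}f(\alpha\tau)$ and setting $(c',d')=(c,d)\alpha$, one finds $E_k(\bar\mu,N)\lvert_{k,\alpha}=p^{k-1}\epsilon_N\sum_{(c,d)}(c'\tau+d')^{-k}$ at $s=0$ (with the evident $s$-analogue), the sum over primitive $(c,d)\equiv\mu\pmod N$, the factor $p^{k-1}=(\det\alpha)^{k-1}$ and the automorphy factor cancelling through the cocycle identity for $\mathbf{j}$. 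Summing over all $\bar\mu\in\mathcal O$ and all $p+1$ representatives collapses $T_pE_k^{(1)}(\bar\lambda,N)$ into $p^{k-1}\epsilon_N$ times a single sum of $(c'\tau+d')^{-k}$ over the multiset of transformed vectors $(c',d')=(c,d)\alpha_i$ with $(c,d)$ primitive and $\overline{(c,d)}\in\mathcal O$.

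The decisive step is to reorganize this multiset by $\gcd(c',d')\in\{1,p\}$. I would show that the primitive transformed vectors occur once each and reduce mod $N$ into the $\Gamma_1(N)$-orbit of $(\bar\lambda_1,\bar p\bar\lambda_2)$, hence assemble into $p^{k-1}E_k^{(1)}\big((\bar\lambda_1,\bar p\bar\lambda_2),N\big)$; while the vectors with $\gcd(c',d')=p$, written $(c',d')=p(c'',d'')$ with $(c'',d'')$ primitive, reduce mod $N$ into the orbit of $(\bar p^{-1}\bar\lambda_1,\bar\lambda_2)$ --- it is here that the normalizations $m\equiv p^{-1}$, $n\equiv0$ pin down the residues --- and since $(c'\tau+d')^{-k}=p^{-k}(c''\tau+d'')^{-k}$, the surviving prefactor on these terms is $p^{k-1}\cdot p^{-k}=p^{-1}$; once I check that each such $(c'',d'')$ is produced by exactly $p$ of the pairs $\big((c,d),\alpha_i\big)$, the accumulated coefficient becomes $p\cdot p^{-1}=1$ and these terms assemble into $E_k^{(1)}\big((\bar p^{-1}\bar\lambda_1,\bar\lambda_2),N\big)$, giving (ii). The main obstacle is exactly this multiplicity bookkeeping: tracking, through the change of variables $(c,d)\mapsto(c,d)\beta_j$ and $(c,d)\mapsto(c,d)\beta$, both the residue class mod $N$ (the first coordinate acquiring $p^{\pm1}$, the second being scaled by $p$ or left fixed) and the number of preimages of each target, which is the classical index computation for $T_p$. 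As an independent check one could bypass the combinatorics: a holomorphic modular form is determined by its $q$-expansion at $\infty$, so it suffices to match Fourier coefficients using \eqref{2.6new}, the standard relation $a_n(T_pf)=a_{np}(f)+p^{k-1}a_{n/p}(\langle p\rangle f)$, and part (i), whereupon Lemma~\ref{lemma5.3} reduces the verification to an elementary divisor-sum identity.
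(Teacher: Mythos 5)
Your proposal is correct, and for part (ii) it takes a genuinely different route from the paper. Part (i) is essentially identical in both: slash the orbital sum by an explicit $\gamma\in\Gamma_{0}(N)$ and apply \eqref{2.5new}; the one wrinkle (shared by the paper's own proof) is that strictly $\bar\lambda\gamma=(\bar d^{-1}\bar\lambda_1,\,\bar b\bar\lambda_1+\bar d\bar\lambda_2)$, so your displayed congruence $\bar\lambda\sigma_d\equiv(\bar d^{-1}\bar\lambda_1,\bar d\bar\lambda_2)$ holds only up to the $\Gamma_1(N)$-orbit --- harmless, since $\bar b\bar\lambda_1$ is a multiple of $\gcd(\lambda_1,N)$ and the orbital sum depends only on the orbit, or one may simply choose $\sigma_d\equiv\begin{pmatrix}d^{-1}&0\\0&d\end{pmatrix}\pmod N$. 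For part (ii) the paper argues entirely on the Fourier side: by \eqref{5.3} it suffices to prove the identity for the unnormalized orbital sums $G_{k}^{(1)}$, and it then quotes the standard formula for the action of $T_p$ on $q$-expansions (Diamond--Shurman, p.~171), using part (i) to identify $\langle p\rangle G_{k}^{(1)}(\bar\lambda,N)$ and matching coefficients against the explicit expansion of Lemma~\ref{lemmaA.2} --- this also handles $k=2$ painlessly, since the nonholomorphic parts $P_{k}^{(1)}$ agree with coefficient $p^{k-1}+1$ on both sides. Your main argument instead runs the classical coset computation directly on the defining lattice sum, and your multiplicity claims are exactly right: with the normalizations $m\equiv p^{-1}$, $n\equiv 0\pmod N$, each primitive $(c',d')$ in the classes of the orbit of $(\bar\lambda_1,\bar p\bar\lambda_2)$ arises from exactly one pair (the unique $\beta_j$ with $j\equiv d'/c'\pmod p$ when $p\nmid c'$, and $\beta$ alone when $p\mid c'$), while each primitive $(c'',d'')$ in the classes of the orbit of $(\bar p^{-1}\bar\lambda_1,\bar\lambda_2)$ arises from exactly $p$ pairs ($p-1$ values of $j$ plus $\beta$ when $p\nmid c''$, all $p$ values of $j$ when $p\mid c''$), yielding the coefficients $p^{k-1}$ and $p^{k-1}\cdot p^{-k}\cdot p=1$. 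What your route buys is self-containedness (no appeal to the $T_p$-on-Fourier-series formula) and, carried out at the parameter $s$, the stronger family identity $T_pE_{k}^{(1)}(\bar\lambda,N;s)=p^{k-1+s}E_{k}^{(1)}\big((\bar\lambda_1,\bar p\bar\lambda_2),N;s\big)+p^{-s}E_{k}^{(1)}\big((\bar p^{-1}\bar\lambda_1,\bar\lambda_2),N;s\big)$; what the paper's buys is brevity. Note finally that your closing ``independent check'' is in substance the paper's actual proof, except that for $k=2$ one should not invoke determination of a holomorphic form by its $q$-expansion but compare the $1/\text{Im}(\tau)$ parts as well, exactly as the paper's displayed computation does.
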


\begin{proof}
\begin{enumerate}[label=(\roman*), align=left, leftmargin=0pt]
\item Let $\gamma = \begin{pmatrix}
			a & b \\
			c & d'
		\end{pmatrix} \in \Gamma_0(N)$ be a matrix with $d' \equiv d($mod $N)$. Then
		\[\langle d \rangle \big(E_{k}^{(1)}(\bar{\lambda}, N)\big) = E_{k}^{(1)}(\bar{\lambda}, N)\lvert_{\gamma} = E_{k}^{(1)}\big((\bar{d}^{-1}\bar{\lambda}_1, \bar{d}\bar{\lambda}_2), N\big).\] 
		
\item In light of \eqref{5.3} it suffices to verify the identity with the $E$-series replaced by the $G$-series. The formula concerning the action of $T_p$ on Fourier series \cite[p.171]{ds} demonstrates that
\begin{equation*} 
\begin{aligned} 
T_p\big(G_{k}^{(1)}(\bar{\lambda}, N)\big) & = \sum_{j \geq 0}\big(A_{k, jp}^{(1)}(\bar{\lambda}) + p^{k-1}A_{k, j/p}^{(1)}(\bar{p}^{-1}\bar{\lambda}_1, \bar{p}\bar{\lambda}_2)\big)\mathbf{e}(j\tau) \\
& \hspace{6cm}+ (p^{k-1}+1)P_{k}^{(1)}(\tau)
\end{aligned} 
\end{equation*} 
where $A_{k, j/p}^{(1)}$ is zero unless $p \mid j$. The identity for the unnormalized series now follows from the formula in Lemma~\ref{lemmaA.2}.   
\end{enumerate}
\end{proof}

We use the formulas for $\Gamma_{1}(N)$ to compute the Hecke action for $\Gamma_{0}(N)$.  

\begin{corollary}
\label{corollary7.5new}
Suppose that $p$ is a prime with $p \nmid N$. We have
\begin{equation*}
\begin{aligned}
T_{p}\big(E_{k}^{(0)}((0, 1), N)\big) & = (p^{k-1}+1)E_{k}^{(0)}((0, 1), N),\\
T_{p}\big(E_{k}^{(0)}((\frac{N}{2}, 1), N)\big) & = (p^{k-1}+1)E_{k}^{(0)}\big((\frac{N}{2}, 1), N\big),\\
T_{p}\big(E_{k}^{(0)}((\delta, \lambda_0), N)\big) & = p^{k-1}E_{k}^{(0)}\big((\delta, p\lambda_0), N\big) + E_{k}^{(0)}\big((\delta, p^{-1}\lambda_0), N\big).   
\end{aligned}
\end{equation*}
Here $p^{-1}\lambda_0$ refers to an integer that equals $p^{-1}\lambda_0$ modulo $N$.  
\end{corollary}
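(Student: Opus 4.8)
The plan is to expand each $\Gamma_0(N)$ basis vector as the orbital sum of $\Gamma_1(N)$ series prescribed in the appendix (equivalently, by \eqref{7.1new}--\eqref{7.2new} with trivial character), apply Lemma~\ref{lemma7.4new}(ii) to every summand, and then reassemble the two resulting families into orbital sums once more. Since $p\nmid N$, the residue $\bar p$ is a unit in $\mathbb{Z}/N$, so the whole argument reduces to tracking how multiplication by $\bar p$ moves the relevant $\Gamma_0(N)$-orbit parameters.

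I would dispose of the two special cusps first. For $(0,1)$ write $E_k^{(0)}((0,1),N)=\tfrac12\sum_{\gcd(\lambda_2,N)=1}E_k^{(1)}((0,\lambda_2),N)$; since $\bar p^{-1}\cdot 0=0$, Lemma~\ref{lemma7.4new}(ii) gives $T_p E_k^{(1)}((0,\lambda_2),N)=p^{k-1}E_k^{(1)}((0,p\lambda_2),N)+E_k^{(1)}((0,\lambda_2),N)$, and because $\lambda_2\mapsto p\lambda_2$ permutes $U_N$, both pieces reproduce the original orbital sum, yielding the eigenvalue $p^{k-1}+1$. The cusp $(N/2,1)$ is handled identically once one observes that $N$ is even and $p$ is odd, so that $\bar p^{-1}\cdot\tfrac{N}{2}=\tfrac{N}{2}$ in $\mathbb{Z}/N$ and $\lambda_2\mapsto p\lambda_2$ permutes the residues coprime to $N/2$; again both Hecke terms collapse onto $E_k^{(0)}((N/2,1),N)$.

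The generic cusp $(\delta,\lambda_0)$ with $1\le\delta<N/2$ is the one needing care, and the key is to fix the correct $\Gamma_0(N)$-orbit invariant. Writing a first coordinate as $\lambda_1=\delta m$ with $\gcd(m,N/\delta)=1$, the invariant is $m\lambda_2\bmod\gcd(\delta,N/\delta)$; for each summand $(a\delta,\lambda_2)$ of \eqref{7.2new} one has $m=a$, so $a\lambda_2\equiv\lambda_0$ throughout the sum. Applying Lemma~\ref{lemma7.4new}(ii) to $E_k^{(1)}((a\delta,\lambda_2),N)$ produces the term $p^{k-1}E_k^{(1)}((a\delta,p\lambda_2),N)$, whose invariant is $a(p\lambda_2)=p\lambda_0$, together with the term $E_k^{(1)}((p^{-1}a\delta,\lambda_2),N)$, whose invariant is $(p^{-1}a)\lambda_2=p^{-1}\lambda_0$. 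Because $\bar p$ is a unit, the maps $(a,\lambda_2)\mapsto(a,p\lambda_2)$ and $(a,\lambda_2)\mapsto(p^{-1}a,\lambda_2)$ are bijections of the index set \eqref{A.3} onto the corresponding index sets for the orbits $(\delta,p\lambda_0)$ and $(\delta,p^{-1}\lambda_0)$; summing and restoring the factor $\tfrac12$ then gives exactly $p^{k-1}E_k^{(0)}((\delta,p\lambda_0),N)+E_k^{(0)}((\delta,p^{-1}\lambda_0),N)$.

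I expect the only real obstacle to be the bookkeeping in the generic case: one must pin down the orbit invariant as $m\lambda_2$ (and not $m^{-1}\lambda_2$, which would spuriously send both Hecke terms to $p\lambda_0$) and verify that multiplication by $\bar p$ carries the indexing set of \eqref{7.2new} bijectively onto those of the shifted orbits with no change of normalization. Once the invariant is fixed this is routine, and the two special formulas then emerge as the degenerate cases $\delta\in\{N,N/2\}$, where $\gcd(\delta,N/\delta)$ forces $p^{\pm1}\lambda_0=\lambda_0$ and the two terms merge.
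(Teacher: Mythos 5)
Your proposal is correct and follows exactly the route of the paper, whose proof is the one-line citation of \eqref{A.2}, \eqref{A.3}, and Lemma~\ref{lemma7.4new}: expand each orbital sum, apply Lemma~\ref{lemma7.4new}(ii) termwise, and note that multiplication by $\bar{p}$ (on $\lambda_2$, resp.\ on $a$) permutes the relevant index sets, carrying the invariant $a\lambda_2 \bmod \gcd(\delta, \frac{N}{\delta})$ to $p\lambda_0$, resp.\ $p^{-1}\lambda_0$. One cosmetic remark: the appendix sums \eqref{A.2}--\eqref{A.3} carry no factor $\frac{1}{2}$ (that factor belongs to the nebentypus series \eqref{7.1new}--\eqref{7.2new} and to the relation $E_{k,x_{\lambda}} = \frac{1}{2}E_{k}^{(0)}(\lambda,N)$), but since you keep it consistently on both sides, the homogeneous identities are unaffected.
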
 
\begin{proof}
Follows from \eqref{A.2}, \eqref{A.3}, and Lemma~\ref{lemma7.4new}. 
\end{proof}

The identities in Lemma~\ref{lemma7.4new} and Corollary~\ref{corollary7.5new} show that the action of Hecke operators on the orbital sums is defined over $\mathbb{Z}$. If $\mathcal{G} = \Gamma_{0}(N)$ with $N \geq 3$, then all the spectral Eisenstein series differ from the orbital sums by a factor of $\frac{1}{2}$. In other words, the action of Hecke operators on the spectral basis is also defined over $\mathbb{Z}$. Now suppose $\mathcal{G} = \Gamma_{1}(N)$. Then the factor $\frac{1}{2}$ appears only if $N = 4$, $\lambda \equiv (2,1) (\text{mod } 4)$, and $k$ is even. Note that if one term in an identity in Lemma~\ref{lemma7.4new} appears with the factor, then all terms in it require the same scaling. As a consequence, we have the following result:

\begin{proposition}
\label{proposition7.6new}
Let $\mathbbm{k}$ be a subring of $\mathbb{C}$. Then $\mathcal{E}_{k}(\mathcal{G}, \mathbbm{k})$ is stable under the action of $\mathbb{T}_{\mathcal{G}} \otimes \mathbbm{k}$ on $\mathcal{E}_{k}(\mathcal{G})$ where $\mathcal{G}$ is either $\Gamma_{1}(N)$ or $\Gamma_{0}(N)$.   
\end{proposition}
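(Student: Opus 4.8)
The plan is to reduce the assertion to the integrality over $\mathbb{Z}$ of the Hecke action on the spectral basis, a fact essentially encoded in Lemma~\ref{lemma7.4new} and Corollary~\ref{corollary7.5new}. First I would record the structural reduction. By \eqref{4.6} together with Theorem~\ref{theorem4.5new}, the module $\mathcal{E}_{k}(\mathcal{G},\mathbbm{k})$ is the free $\mathbbm{k}$-module on the spectral basis $\mathcal{B}_{\mathcal{G},k}$, so $\mathcal{E}_{k}(\mathcal{G},\mathbbm{k}) = \mathcal{E}_{k}(\mathcal{G},\mathbb{Z})\otimes_{\mathbb{Z}}\mathbbm{k}$. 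Since $\mathbb{T}_{\mathcal{G}}$ is generated as a $\mathbb{Z}$-algebra by the operators $\langle d\rangle$ and $T_{p}$ for $\Gamma_{1}(N)$, and by the $T_{p}$ alone for $\Gamma_{0}(N)$, and since $\mathbb{T}_{\mathcal{G}}\otimes\mathbbm{k}$ is generated over $\mathbbm{k}$ by the image of $\mathbb{T}_{\mathcal{G}}$, it suffices to show that each generator carries every element of $\mathcal{B}_{\mathcal{G},k}$ into the $\mathbb{Z}$-span $\mathcal{E}_{k}(\mathcal{G},\mathbb{Z})$; extending scalars to $\mathbbm{k}$ then yields the proposition.

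Next I would feed in the explicit formulas. Lemma~\ref{lemma7.4new} and Corollary~\ref{corollary7.5new} express the image of each function $E_{k}^{(1)}(\bar\lambda,N)$, resp.\ $E_{k}^{(0)}(\lambda,N)$, under $\langle d\rangle$ and $T_{p}$ as a $\mathbb{Z}$-linear combination (with coefficients among $\{\pm 1, p^{k-1}\}$) of functions of the same type; in particular the $\mathbb{Z}$-lattice spanned by the orbital sums is $\mathbb{T}_{\mathcal{G}}$-stable. The remaining point is the passage from orbital sums to the spectral basis, which is the bookkeeping of the factor $\tfrac{1}{2}$ recorded in \eqref{4.6new} and in the paragraph preceding the statement. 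For $\mathcal{G}=\Gamma_{0}(N)$ with $N\geq 3$ every spectral Eisenstein series equals one half of its orbital sum, so this uniform rescaling transports the integral action on orbital sums verbatim to the spectral basis. For $\mathcal{G}=\Gamma_{1}(N)$ the factor $\tfrac{1}{2}$ intervenes only for the single irregular orbit arising when $N=4$, $\lambda\equiv(2,1)\,(\text{mod }4)$, and $k$ is even; here the decisive observation, already noted before the statement, is that whenever one term of an identity in Lemma~\ref{lemma7.4new} carries this factor then so do all the remaining terms, so the scaling cancels in each identity and no half-integer coefficient survives. The cases $N\in\{1,2\}$ reduce to a direct check against the same formulas, the relevant bases coinciding with the small-level ones.

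Finally I would dispose of the weight $k=2$ subtlety, where $\mathcal{B}_{\mathcal{G},2}$ consists of the differences $E_{2,x}-\tfrac{\lvert\text{or}_{\mathcal{G},N}(x)\rvert}{\lvert\text{or}_{\mathcal{G},N}(x_{0})\rvert}E_{2,x_{0}}$ rather than the orbital sums themselves. The clean route is to observe that the coefficient of $1/\text{Im}(\tau)$ defines a linear functional on the extended space whose kernel is the holomorphic space $\mathcal{E}_{2}(\mathcal{G})$, and that Corollary~\ref{corollary7.5new} forces $T_{p}$ to scale this one-dimensional non-holomorphic quotient by $p+1$, because the orbital sums appearing in the image of a given orbital sum share its cusp amplitude and hence its non-holomorphic coefficient. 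Thus $T_{p}$ preserves $\mathcal{E}_{2}(\mathcal{G},\mathbb{C})$, and intersecting the integral action on the extended lattice with this holomorphic subspace produces an integral action on $\mathcal{E}_{2}(\mathcal{G},\mathbb{Z})$.

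I expect the main obstacle to be precisely this $\tfrac{1}{2}$-factor bookkeeping, together with the simultaneous demand of holomorphy and integrality in weight $2$: one must verify that the normalization constants relating the $E_{k,x}$, the orbital sums $E_{k}(\mathcal{O},\mathcal{G})$, and the basis functions of Theorem~\ref{theorem4.5new} are consistent across each Hecke identity, so that applying a generator never introduces a denominator. Everything else is a formal consequence of the freeness of $\mathcal{E}_{k}(\mathcal{G},\mathbbm{k})$ over $\mathbbm{k}$ and the already-established formulas.
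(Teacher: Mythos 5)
Your proposal is correct and takes essentially the same route as the paper: integrality over $\mathbb{Z}$ of the Hecke action on the orbital sums read off from Lemma~\ref{lemma7.4new} and Corollary~\ref{corollary7.5new}, combined with the observation that the factor $\tfrac{1}{2}$ relating orbital sums to spectral series (uniform for $\Gamma_{0}(N)$, and only the irregular orbit at $N=4$ for $\Gamma_{1}(N)$) enters every term of each identity consistently, so no denominators appear. Your explicit weight-$2$ verification that the Hecke operators preserve the holomorphic subspace --- via the $1/\text{Im}(\tau)$-coefficient functional being scaled by $p+1$, then intersecting the stable extended lattice with $\text{Hol}(\mathbb{H})$ --- is a detail the paper leaves implicit, but it sits squarely within the same argument.
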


\begin{remark}
\label{remark7.7new}
The proof of Lemma~\ref{lemma7.4new} shows that the results of this subsection also hold for the bases arising from the unnormalized series. 
\end{remark}

\subsection{Conclusion: Further directions}
\label{section7.4new}
The primary intention behind our explicit study of the spectral Eisenstein series on general congruence subgroups is to discover new results of interest, generalizing the traditional framework of the arithmetic theory for $\Gamma_{1}(N)$. In a related work, the author \cite{sahu} has shown that the image of the spectral basis on a general congruence subgroup containing $\Gamma(N)$, under the Eichler-Shimura homomorphism, is defined over $\mathbb{Q}(\zeta_N)$. This result automatically implies that the action of a double coset operator on $\mathcal{E}_{k}(\mathcal{G})$ preserves $\mathcal{E}_{k}(\mathcal{G}, \mathbbm{k})$ where $\mathbbm{k}$ is a subfield of $\mathbb{C}$ containing $\mathbb{Q}(\zeta_N)$; cf. Proposition~\ref{proposition7.6new}. One of the possible directions from here is to write down the Hecke eigenbasis for the space of Eisenstein series on general congruence subgroups. Note that the family of congruence subgroups listed in \eqref{3.2new} already gives a large class of test cases to implement this program, which may extend to a general congruence subgroup by a Larcher-type group theoretic argument. Explicit knowledge of cusps and Hecke eigenbasis relates to subtle topics, such as the Eisenstein ideal and modular symbols; cf. \cite{banerjee-merel}. The author aspires to investigate some of these topics on more general congruence subgroups in the future.

\section*{Acknowledgments}
I am indebted to Prof. Eknath Ghate for stimulating conversations and generous encouragement. I also wish to thank the School of Mathematics, TIFR, for providing an exciting environment to study and work.

\appendix

\section{Explicit bases}
\label{appendix}
The appendix presents explicit formulas for the spectral basis attached to familiar families of congruence subgroups using the theory in Section~\ref{section4}. We begin by fixing a suitable choice of representatives $\mathcal{A}_N$ for the $\{\pm \text{Id}\}$-action on $\Lambda_N$ that helps us to separate an equivalent pair of regular orbits. For convenience one represents elements of $\Lambda_N$ by their lifts inside $[0, N) \times [0, N)$. Set $\mathcal{A}_{N} = \Lambda_{N}$ if $N \in \{1,2\}$. Assume that $N \geq 3$. Write 
\begin{gather*}
	\mathcal{A}_{N} = \mathcal{A}_{N}^{I} \amalg \mathcal{A}_{N}^{II} \amalg \mathcal{A}_{N}^{III};\\
	\mathcal{A}_{N}^{I} = \{(0, \lambda_2) \mid \substack{1 \leq \lambda_2 < \frac{N}{2},\; \gcd(\lambda_2, N)=1}\},\\
	\mathcal{A}_{N}^{II} = \{(\lambda_1, \lambda_2) \mid \substack{1 \leq \lambda_1 < \frac{N}{2}, \; 0 \leq \lambda_2 \leq N-1,\; \gcd(\lambda_1, \lambda_2, N)=1}\},\\
	\mathcal{A}_{N}^{III} = \begin{cases}
		\{(\frac{N}{2}, \lambda_2), (\frac{N}{2}, \frac{N}{2} + \lambda_2) \mid \substack{1 \leq \lambda_2 < \frac{N}{4},\, \gcd(\lambda_2, \frac{N}{2}) = 1}\}, & \substack{\text{$N$ even and $> 4$;}}\\
		\{(2, 1)\}, & \substack{\text{$N = 4$;}}\\
		\emptyset, & \substack{\text{$N$ odd.}} 	
	\end{cases}
\end{gather*}
In the following discussion, if $\frac{N}{2}$ occurs as part of a list, then it is implicit that the corresponding member appears only if $N$ is even.

\subsection{Example: $\Gamma(N,t)$}
\label{appendixA.1}
Let $t$ be a positive integer so that $t \mid N$. With $(N,t)$ one associates a congruence subgroup described by 
\[\Gamma(N,t) := \Big\{\begin{pmatrix}
	a & b\\
	c & d
\end{pmatrix}\, \bigl\lvert\, \substack{a \equiv d \equiv 1(\text{mod }N), \\ c \equiv 0(\text{mod }N),\, b \equiv 0(\text{mod }t)}\Big\}.\]
Note that $\Gamma(N, 1) = \Gamma_{1}(N)$ and $\Gamma(N,N) = \Gamma(N)$. Thus the family $\Gamma(N,t)$ provides an interpolation between the two well-studied congruence subgroups. Let $(\lambda_1, \lambda_2), (\lambda'_1, \lambda'_2) \in \Lambda_N$. Then \[(\lambda_1, \lambda_2) \Gamma(N,t) = (\lambda'_1, \lambda'_2) \Gamma(N,t)\] if and only if $(\lambda_1', \lambda_2') \equiv  (\lambda_1, \lambda_2 + j t\lambda_1)($mod $N)$ for some $j \in \mathbb{Z}$. This equivalence allow us to locate the orbits in $\Lambda_{N}$. 

\paragraph{Orbits for $\Gamma(N,t)$.} Write $\mathcal{O}_{(\lambda_1, \lambda_2),N}^{(t)} = (\lambda_1, \lambda_2)\Gamma(N,t) \subseteq \Lambda_{N}$.
\begin{enumerate}[label=\textbullet, align=left, leftmargin=10pt]
\item $(N=2)$ The set $\mathcal{A}_2$ consists of two $\Gamma_1(2)$ orbits, namely, \[\text{$\mathcal{O}^{(1)}_{(0,1),2} = \{(0,1)\}$ and $\mathcal{O}^{(1)}_{(1,0),2} = \{(1,0), (1,1)\}$.}\]
If $t = 2$ then each point of $\mathcal{A}_2$ determine a singleton orbit for $\Gamma(2)$. 
	
\item $(N \geq 3)$ The elements of type I are fixed under the action of $\Gamma(N, t)$ and give rise to singleton orbits $\mathcal{O}^{(t)}_{(0, \lambda_2), N} = \{(0, \lambda_2)\}$ for each choice of $\lambda_2$.
	
\item $(N \geq 3)$ The elements of type II define orbits of the form \[\mathcal{O}^{(t)}_{(\lambda_1, \lambda_0),N} = \{(\lambda_1, \lambda_0 + j \gcd(t\lambda_1, N)) \mid \substack{0 \leq j \leq \frac{N}{\gcd (t\lambda_1, N)} - 1}\}\] where $\lambda_1$ is as in the definition of $\mathcal{A}_{N}^{II}$ and $0 \leq \lambda_0 \leq \gcd(t\lambda_1, N) - 1$ with $\gcd (\lambda_0, t\lambda_1, N) = 1$. 
	
\item ($N$ even and $>4$) If $t$ is odd then the elements of type III lie in orbits of the form  \[\mathcal{O}^{(t)}_{(\frac{N}{2}, \lambda_2), N} = \{(\frac{N}{2}, \lambda_2), (\frac{N}{2}, \frac{N}{2} + \lambda_2)\}.\]
where $1 \leq \lambda_2 < \frac{N}{4}$ and $\gcd(\lambda_2, \frac{N}{2}) = 1$. Now suppose $t$ is even. Then $\Gamma(N,t)$ acts trivially on elements of type III and each orbit is singleton.
	
\item ($N=4$) If $t =1$, then the orbit of $(2, 1)$ is $\{(2,1), (2,3)\}$. But if $t$ is even then the orbit of $(2, 1)$ equals $\{(2,1)\}$. 
\end{enumerate}

\begin{remark}
\label{remarkA.1}
Let $N \geq 3$. The list above shows that the $\Gamma(N,t)$-orbits of the elements, except $\mathcal{O}^{(1)}_{(2,1),4}$, are regular and contained in $\mathcal{A}_{N}$. The orbit $\mathcal{O}^{(1)}_{(2,1),4}$ is irregular and corresponds to the irregular cusp $\frac{1}{2}$ of $\Gamma_{1}(4)$. In particular $\mathcal{A}_{N}$ is $\Gamma(N,t)$-admissible in the sense of Section~\ref{section3}.	
\end{remark}

Let $\lambda = (\lambda_1, \lambda_2) \in \Lambda_N$. Write \[\delta_t(\lambda) := \gcd(t\lambda_1, N).\] We abbreviate $\delta_{t}(\lambda)$ as $\delta_{t}$ if $\lambda$ is clear from the context. With $\lambda$ one associates an orbital sum on $\Gamma(N,t)$ described as 
\begin{equation}
\label{A.1}
E_{k}^{(t)}(\lambda, N) := \sum_{n=0}^{\frac{N}{\delta_t} - 1}E_{k}(\overline{(\lambda_1,\lambda_2 + \delta_t n)}, N).
\end{equation} 
Set  
\[\mathcal{O}_{k,N}^{(t)} := \begin{cases}
	\emptyset, & \substack{\text{$k$ odd and $N \in \{1,2\}$;}}\\
	\{(0,0)\}, & \substack{\text{$k$ even and $N = t =1$;}}\\
	\{(0,1), (1,0)\}, & \substack{\text{$k$ even and $N = 2$, $t=1$;}}\\
	\{(0,1), (1,0), (1,1)\}, & \substack{\text{$k$ even and $N = t= 2$;}}\\
	\{(0, \lambda_2)\} \cup \{(\lambda_1, \lambda_0)\} \cup \{(\frac{N}{2}, \lambda_2)\}, & \substack{\text{$N \geq 3$, $N \neq 4$, and $t$ odd;}}\\
	\{(0, \lambda_2)\} \cup \{(\lambda_1, \lambda_0)\} \cup \{(\frac{N}{2}, \lambda_2), (\frac{N}{2}, \frac{N}{2} + \lambda_2)\}, & \substack{\text{$N \geq 3$, $N \neq 4$, and $t$ even;}}\\
	\{(0, \lambda_2)\} \cup \{(\lambda_1, \lambda_0)\}, & \substack{\text{$k$ odd and $(N,t) = (4,1)$;}}\\
	\{(0, \lambda_2)\} \cup \{(\lambda_1, \lambda_0)\} \cup \{(2, 1)\}, & \substack{\text{$k$ even and $(N,t) = (4,1)$;}} 
\end{cases}\]
where the ranges for the parameters are as in the list above. For simplicity, assume that $N \geq 3$ if $k$ is odd, and write $\mathcal{O}_{\lambda}$ for the orbit of $\lambda$. With each $\lambda \in \mathcal{O}_{k,N}^{(t)}$ one can attach a cusp $x_{\lambda} := x_{\mathcal{O}_\lambda}$ as in the discussion around \eqref{4.6new}. Then \[\{x_{\lambda} \mid \lambda \in \mathcal{O}_{k,N}^{(t)}\}\] is the collection of all cusps, resp. regular cusps, if $k$ is even, resp. odd, without repetition. Moreover, in the light of Remark~\ref{remarkA.1} we have 
\[E_{k, x_{\lambda}} = \begin{cases}
	\frac{1}{2} E_k^{(t)}(\lambda, N), & \text{if $(N,t) = (4,1)$, $k$ even, $\lambda = (2,1)$};\\
	E_k^{(t)}(\lambda, N), & \text{otherwise.} 
\end{cases}\] 
Here the cusp $x_{(0,1)} = \infty$ has the smallest cusp amplitude, and the size of its $\Gamma(N,t)$-orbit in $\mathcal{C}\big(\Gamma(N)\big)$ equals $1$. Now one can write down a spectral basis for $\Gamma(N,t)$ using Theorem~\ref{theorem4.5new}. To perform explicit calculations we also need the unnormalized version of these series. Set  
\begin{equation*}
G_{k}^{(t)}(\lambda, N) := \sum_{n=0}^{\frac{N}{\delta_t} - 1}G_{k}(\overline{(\lambda_1, \lambda_2 +\delta_t n)}, N).
\end{equation*} 

\begin{lemma}
\label{lemmaA.2}
The Fourier expansion formula the unnormalized orbital sum is as follows:  
\begin{gather*}
G_{k}^{(t)}(\lambda, N) = \sum_{j \geq 0}A_{k,j}^{(t)}(\lambda)\mathbf{e}(\frac{j \delta_1 \tau} {\delta_{t}}) + P_{k}^{(t)}(\tau);\\
A_{k,0}^{(t)}(\lambda) := \mathbbm{1}_{\mathbb{Z}/N\mathbb{Z}}(\lambda_1, 0) \sum_{\substack{n \equiv \lambda_2(\delta_t),\\ n \neq 0}} \frac{1}{n^k},\;P_k^{(t)}(\tau) := - \mathbbm{1}_{\mathbb{Z}}(k,2)\frac{\pi}{\delta_t N\emph{Im}(\tau)},\\
A_{k,j}^{(t)}(\bar{\lambda}) := \frac{(-2\pi i)^k}{(k-1)! \delta_t^k} \sum_{\substack{r \in \mathbb{Z} - \{0\},\\ r \mid j, \frac{j}{r}\equiv \frac{\lambda_1} {\delta_1}(\frac{N} {\delta_1})}} \emph{sgn}(r)r^{k-1}\mathbf{e}(\frac{r\lambda_2}{\delta_t})\hspace{.3cm}(\substack{j \geq 1})
\end{gather*}
where $\delta_{1} = \gcd (\lambda_1, N)$ and $\delta_{t} = \gcd (\lambda_1t, N)$.
\end{lemma}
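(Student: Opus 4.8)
The plan is to read off the Fourier expansion of $G_{k}^{(t)}(\lambda, N)$ by substituting the known expansion \eqref{2.6new} for each constituent series $G_{k}\big(\overline{(\lambda_1, \lambda_2 + \delta_t n)}, N\big)$ and summing over $n = 0, \ldots, \frac{N}{\delta_t}-1$. Since the defining sum over $n$ is finite, I may rearrange freely, so the strategy is to work coefficient by coefficient in the variable $\mathbf{e}(\frac{m\tau}{N})$ and reindex the frequency only at the end, splitting the computation into the pole term, the constant term, and the positive-frequency coefficients.

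First I would dispose of the two easy pieces. The pole term $P_k(\tau) = -\mathbbm{1}_{\mathbb{Z}}(k,2)\frac{\pi}{N^2\text{Im}(\tau)}$ is independent of $n$, so the $\frac{N}{\delta_t}$ copies sum to $-\mathbbm{1}_{\mathbb{Z}}(k,2)\frac{\pi}{\delta_t N\text{Im}(\tau)} = P_k^{(t)}(\tau)$. For the constant term, observe that the indicator $\mathbbm{1}_{\mathbb{Z}/N\mathbb{Z}}(\lambda_1, 0)$ forces $\lambda_1 \equiv 0 \pmod N$ before any term survives, and in that case $\delta_t = \gcd(t\lambda_1, N) = N$, so the sum over $n$ collapses to the single index $n = 0$ and reproduces $\mathbbm{1}_{\mathbb{Z}/N\mathbb{Z}}(\lambda_1,0)\sum_{n\equiv \lambda_2(\delta_t),\, n\ne 0} \frac{1}{n^k} = A_{k,0}^{(t)}(\lambda)$.

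The substance lies in the positive-frequency coefficients. Interchanging the finite $n$-sum with the $r$-sum of \eqref{2.6new}, the coefficient of $\mathbf{e}(\frac{m\tau}{N})$ acquires the factor $\sum_{n=0}^{N/\delta_t - 1}\mathbf{e}(\frac{r\delta_t n}{N})$, a geometric sum equal to $\frac{N}{\delta_t}$ when $\frac{N}{\delta_t}\mid r$ and $0$ otherwise. Writing each surviving $r$ as $r = \frac{N}{\delta_t}s$, the accumulated prefactor $\frac{(-2\pi i)^k}{(k-1)!N^k}\cdot\frac{N}{\delta_t}\cdot\big(\frac{N}{\delta_t}\big)^{k-1}$ collapses exactly to $\frac{(-2\pi i)^k}{(k-1)!\delta_t^k}$, while $\mathbf{e}(\frac{r\lambda_2}{N})$ becomes $\mathbf{e}(\frac{s\lambda_2}{\delta_t})$. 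What remains is to transport the index conditions $r\mid m$ and $\frac{m}{r}\equiv \lambda_1 \pmod N$ into the conditions displayed in the statement.

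The main obstacle — and the one computation I would carry out in full — is this condition-matching together with the reindexing of the frequency. Restricting to $r = \frac{N}{\delta_t}s$ already forces $\frac{N}{\delta_t}\mid m$, and the congruence $\frac{m}{r}\equiv \lambda_1\pmod N$ together with the fact that $\delta_1 = \gcd(\lambda_1,N)$ divides both $\lambda_1$ and $N$ forces $\delta_1 \mid \frac{m}{r}$; tracing this through shows the nonzero coefficients occur precisely at $m = \frac{N\delta_1}{\delta_t}j$, which explains why the expansion runs over $\mathbf{e}(\frac{j\delta_1\tau}{\delta_t})$ rather than $\mathbf{e}(\frac{j\tau}{N})$. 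After this substitution the conditions $s\mid j\delta_1$ and $\frac{j\delta_1}{s}\equiv\lambda_1\pmod N$ are shown equivalent to $s\mid j$ and $\frac{j}{s}\equiv\frac{\lambda_1}{\delta_1}\pmod{\frac{N}{\delta_1}}$: here $s\mid j$ is not assumed but deduced from $\delta_1\mid\frac{j\delta_1}{s}$, after which dividing the congruence by $\delta_1$ (legitimate since $\delta_1\mid N$) produces the stated modulus. Assembling these identifications recovers exactly the coefficient $A_{k,j}^{(t)}(\lambda)$, completing the expansion.
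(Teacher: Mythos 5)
Your proposal is correct and is exactly the paper's (one-line) argument carried out in full: the paper simply declares the lemma a direct consequence of the Fourier expansion \eqref{2.6new}, and you supply the termwise substitution, the geometric sum $\sum_{n=0}^{N/\delta_t-1}\mathbf{e}(\frac{r\delta_t n}{N})$ picking out $\frac{N}{\delta_t}\mid r$, the prefactor collapse to $\frac{(-2\pi i)^k}{(k-1)!\,\delta_t^k}$, and the condition-matching that deduces $s\mid j$ from $\delta_1\mid\frac{j\delta_1}{s}$ before dividing the congruence by $\delta_1$. All steps, including the $\delta_t=N$ collapse of the constant term and the pole-term count, check out.
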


\begin{proof}
The identity is a direct consequence of the Fourier expansion formula \eqref{2.6new} for the unnormalized series on $\Gamma(N)$. 
\end{proof}

\subsection{Example: $\Gamma_{0}(N)$}
\label{appendixA.2}
If $N \in \{1,2\}$ then $\Gamma_{0}(N) = \Gamma_{1}(N)$ and there is nothing new. Suppose that $N$ is a positive integer $\geq 3$. Let $(\lambda_1, \lambda_2), (\lambda'_1, \lambda'_2) \in \Lambda_N$. A simple analysis shows that $(\lambda_1, \lambda_2) \Gamma_0(N) = (\lambda'_1, \lambda'_2) \Gamma_0(N)$ if and only if \[(\lambda_1', a\lambda_2') \equiv (a\lambda_1, \lambda_2 + j \lambda_1)(\text{mod }N)\] for some $a, j \in \mathbb{Z}$ with $\gcd(a, N) = 1$. 

\paragraph{Orbits for $\Gamma_0(N)$.} \cite[p.103]{ds} Write $\mathcal{O}_{(\lambda_1, \lambda_2),N}^{(0)} = (\lambda_1, \lambda_2) \Gamma_{0}(N) \subseteq \Lambda_{N}$. 

\begin{enumerate}[label=\textbullet, align=left, leftmargin=10pt]
\item The elements of type I lie in a single orbit $\mathcal{O}^{(0)}_{(0,1), N} = \mathcal{A}^{I}_{N} \amalg -\mathcal{A}^{I}_{N}$.
\item The elements of type II give rise to orbits parameterized by $(\delta, \lambda_0)$ where $\delta$ is divisor of $N$ satisfying $1 \leq \delta < \frac{N}{2}$ and $0 \leq \lambda_0 \leq \gcd(\delta, \frac{N}{\delta}) -1$ is an integer such that $\gcd (\lambda_0, \delta, \frac{N}{\delta}) = 1$. Set $\mathcal{X}_{(\delta, \lambda_0)} = \amalg \mathcal{O}^{(1)}_{(a\delta, \lambda_2)}$ where $1 \leq a < \frac{N}{2\delta}$ with $\gcd(a, \frac{N}{\delta}) = 1$ and $0 \leq \lambda_2 \leq \delta - 1$ so that $\gcd(\lambda_2, \delta) = 1$ and $a \lambda_2 \equiv \lambda_0\big($mod $\gcd(\delta, \frac{N}{\delta})\big)$. Then \[\mathcal{O}^{(0)}_{(\delta, \lambda_0), N} = \mathcal{X}_{(\delta, \lambda_0)} \amalg - \mathcal{X}_{(\delta, \lambda_0)}.\] 
\item The elements of type III give rise to a single orbit $\mathcal{O}^{(0)}_{(\frac{N}{2}, 1), N} = \mathcal{A}_{N}^{III} \amalg -\mathcal{A}_{N}^{III}$.    
\end{enumerate}

We first introduce a few orbital sums that facilitate the description of the basis. Here $-\text{Id} \in \Gamma_{0}(N)$ and it suffices to consider the case that $k$ is even. Write 
\begin{equation}
\label{A.2}
\begin{aligned}
E_{k}^{(0)}\big((0, 1), N\big) &:= \sum_{\substack{0 \leq \lambda_2 \leq N-1,\\ \gcd(\lambda_2, N) = 1}} E_{k}^{(1)}\big((0, \lambda_2), N\big),\\
E_{k}^{(0)}\big((\frac{N}{2}, 1), N\big)  &:= \sum_{\substack{0 \leq \lambda_2 \leq \frac{N}{2}-1, \\ \gcd(\lambda_2, \frac{N}{2}) = 1}} E_{k}^{(1)}\big((\frac{N}{2}, \lambda_2), N\big).   
\end{aligned}
\end{equation}
Let $\delta$ be a divisor of $N$ with $1 \leq \delta < \frac{N}{2}$ and $\lambda_0$ be an integer coprime to $\gcd(\delta, \frac{N}{\delta})$. Set
\begin{equation}
\label{A.3}  
E_{k}^{(0)}\big((\delta, \lambda_0), N\big) := \sum_{(a, \lambda_2)} E_{k}^{(1)}\big((a\delta, \lambda_2), N\big)
\end{equation}
where the sum is over \[\Big\{(a, \lambda_2) \,\bigl\lvert\, \substack{0 \leq a \leq \frac{N}{\delta}-1, \gcd(a, \frac{N}{\delta}) = 1, 0 \leq \lambda_2 \leq \delta -1,\\ \gcd(\lambda_2, \delta)=1, a\lambda_2 \equiv \lambda_0 \big(\gcd(\delta, \frac{N}{\delta})\big)}\Big\}.\]   
The list of orbits yields a set of representatives \[\mathcal{O}_{N}^{(0)} := \big\{(0,1), (\frac{N}{2},1), (\delta, \lambda_0) \bigl\lvert\, \substack{\text{$\delta, \lambda_0$ as above and}\\0 \leq \lambda_0 \leq \gcd(\delta, \frac{N}{\delta})-1}\big\}.\] 
With each $\lambda \in \mathcal{O}_{N}^{(0)}$ one attaches a cusp $x_{\lambda}$ exactly as in Appendix~\ref{appendixA.1}. Note that $E_{k, x_{\lambda}} = \frac{1}{2}E_{k}^{(0)}(\lambda,N)$. The size of the orbit of $\lambda$ in $\Lambda_{N}$ equals 
\begin{equation} 
\label{A.4}
\lvert \lambda \Gamma_{0}(N) \rvert = \frac{N}{\delta_1} \frac{\varphi(\frac{N}{\delta_1})\varphi(\delta_1)}{\varphi\big(\gcd(\delta_1, \frac{N}{\delta_1})\big)} = \frac{N}{\delta_1} \varphi\big(\text{lcm}(\delta_1, \frac{N}{\delta_1})\big). 
\end{equation}
Here  $\varphi(\cdot)$ is the Euler's totient function, and $\delta_{1} = \gcd(\lambda_1,N)$.  In particular $\lvert (0,1) \Gamma_{0}(N) \rvert = \varphi(N)$. An elementary argument using \eqref{A.4} and the prime factorization of $N$ shows that $\frac{\lvert \lambda \Gamma_{0}(N) \rvert}{\varphi(N)}$ is an integer for each $\lambda \in \mathcal{O}^{(0)}_{N}$. Therefore, the cusp $x_{(0,1)} = \infty$ has the smallest cusp amplitude, and the size of its $\Gamma_{0}(N)$-orbit in $\mathcal{C}\big(\Gamma(N)\big)$ equals $\frac{\varphi(N)}{2}$. Now one can write down the spectral basis for $\Gamma_{0}(N)$ using Theorem~\ref{theorem4.5new}.

\end{document}